\DeclareMathOperator{\im}{im}
\DeclareMathOperator{\dvol}{dvol}
\DeclareMathOperator{\Imaginary}{Im}
\DeclareMathOperator{\Real}{Re}
\DeclareMathOperator{\rwedge}{\diamond}
\DeclareMathOperator{\krwedge}{\bullet}
\DeclareMathOperator{\hodge}{\star}
\newcommand{\defn}[1]{{\boldmath\bfseries#1}}
\newcommand{\Alpha}{A}
\newcommand{\Beta}{B}
\newcommand{\Lscal}{L_{0,0}^{\mathrm{scal}}}
\newcommand{\Qscal}{Q_{0,0}^{\mathrm{scal}}}
\newcommand{\HBC}{H_{\mathrm{BC}}}
\newcommand{\HA}{H_{\mathrm{A}}}
\newcommand{\HQ}{\mathcal{H}_{\mathrm{Q},\dbbar}}
\newcommand{\HB}{\mathcal{H}_{\mathrm{B},\dbbar}}
\newcommand{\HQdual}{\mathcal{H}^{\mathrm{Q},\dbbar}}
\newcommand{\HBdual}{\mathcal{H}^{\mathrm{B},\dbbar}}
\newcommand{\mHQ}{\mathcal{H}_{\mathrm{Q},d}}
\newcommand{\COmega}{\Omega_{\mathbb{C}}}
\newcommand{\oW}{\overline{W}}
\newcommand{\opsi}{\overline{\psi}}
\newcommand{\oxi}{\overline{\xi}}
\newcommand{\otau}{\overline{\tau}}
\newcommand{\oomega}{\overline{\omega}}
\newcommand{\omD}{\overline{\mathcal{D}}}
\newcommand{\oBox}{\overline{\Box}}
\newcommand{\hA}{\widehat{A}}
\newcommand{\hP}{\widehat{P}}
\newcommand{\hnabla}{\widehat{\nabla}}
\newcommand{\htheta}{\widehat{\theta}}
\newcommand{\lp}{\langle}
\newcommand{\rp}{\rangle}
\newcommand{\lv}{\lvert}
\newcommand{\rv}{\rvert}
\newcommand{\llp}{\lp\!\lp}
\newcommand{\rrp}{\rp\!\rp}
\newcommand{\db}{\partial_b}
\newcommand{\dbbar}{\overline{\partial}_b}
\newcommand{\dhor}{\partial_0}
\newcommand{\mD}{\mathcal{D}}
\newcommand{\mH}{\mathcal{H}}
\newcommand{\mT}{\mathcal{T}}
\newcommand{\kI}{\mathfrak{I}}
\newcommand{\kJ}{\mathfrak{J}}
\newcommand{\kL}{\mathfrak{L}}
\newcommand{\kQ}{\mathfrak{Q}}
\newcommand{\bC}{\mathbb{C}}
\newcommand{\bD}{\mathbb{D}}
\newcommand{\bH}{\mathbb{H}}
\newcommand{\bR}{\mathbb{R}}
\newcommand{\sP}{\mathscr{P}}
\newcommand{\sR}{\mathscr{R}}
\newcommand{\sS}{\mathscr{S}}
\def\sideremark#1{\ifvmode\leavevmode\fi\vadjust{\vbox to0pt{\vss
 \hbox to 0pt{\hskip\hsize\hskip1em
 \vbox{\hsize3cm\tiny\raggedright\pretolerance10000
 \noindent #1\hfill}\hss}\vbox to8pt{\vfil}\vss}}}
\newcommand{\suchthatcolon}{\mathrel{}:\mathrel{}}
\newtheorem{theorem}{Theorem}[section]
\newtheorem{proposition}[theorem]{Proposition}
\newtheorem{lemma}[theorem]{Lemma}
\newtheorem{corollary}[theorem]{Corollary}
\theoremstyle{definition}
\newtheorem{definition}[theorem]{Definition}
\newtheorem{conjecture}[theorem]{Conjecture}
\theoremstyle{remark}
\newtheorem{remark}[theorem]{Remark}
\numberwithin{equation}{section}
\begin{document}

\title[$Q$-curvature operators in dimension five]{Some $Q$-curvature operators on five-dimensional pseudohermitian manifolds}
\author{Jeffrey S. Case}
\address{109 McAllister Building \\ Penn State University \\ University Park, PA 16802 \\ USA}
\email{jscase@psu.edu}
\keywords{$Q$-curvature; $Q$-flat contact form; invariant differential operators}
\subjclass[2020]{Primary 32V05; Secondary 58A10; 58J10}
\begin{abstract}
 We construct $Q$-curvature operators on $d$-closed $(1,1)$-forms and on $\dbbar$-closed $(0,1)$-forms on five-dimensional pseudohermitian manifolds.
 These closely related operators give rise to a new formula for the scalar $Q$-curvature.
 As applications, we give a cohomological characterization of CR five-manifolds which admit a $Q$-flat contact form; and we show that every closed, strictly pseudoconvex CR five-manifold with trivial first real Chern class admits a $Q$-flat contact form provided the $Q$-curvature operator on $\dbbar$-closed $(0,1)$-forms is nonnegative.
\end{abstract}
\maketitle

\section{Introduction}
\label{sec:intro}

In conformal geometry, the \defn{$Q$-curvature operators} on an even-dimensional Riemannian manifold $(M^{2n},g)$ --- introduced by Branson and Gover~\cite{BransonGover2005} using the tractor calculus and reconstructed by Aubry and Guillarmou~\cite{AubryGuillarmou2011} via Poincar\'e--Einstein metrics --- are a family $Q_k^g \colon \Omega^kM \to \Omega^kM$, $k\in\{0,\dotsc,n\}$, of formally self-adjoint differential operators with the property that
\begin{equation*}
 e^{2(n-k)\Upsilon}Q_k^{e^{2\Upsilon}g}\omega = Q_k^g\omega + c_{k} (d^\ast)^g Q_{k+1}^gd(\Upsilon \omega)
\end{equation*}
for all $\omega \in \ker d \cap \Omega^kM$ and all $\Upsilon \in C^\infty(M)$.
It follows that the \defn{GJMS operator} $L_k^g := c_k (d^\ast)^g Q_{k+1}^gd \colon \Omega^kM \to \Omega^kM$ on forms is conformally covariant.
These generalize Branson's $Q$-curvature~\cite{Branson1995} and the critical-order scalar GJMS operator~\cite{GJMS1992} to differential forms.
On closed Riemannian manifolds, the $Q$-curvature operator determines a conformally invariant pairing
\begin{equation*}
 (\omega, \tau) \mapsto \int_M \lp \omega, Q_k^g\tau\rp_g \, \dvol_g
\end{equation*}
on closed differential $k$-forms, and the conformally invariant vector space
\begin{equation*}
 \mH^k := \left\{ \omega \in \Omega^kM \suchthatcolon d\omega = 0, d^\ast Q_k\omega = 0 \right\}
\end{equation*}
is finite-dimensional.
The canonical morphism $\mH^k \to H^k(M;\bR)$ motivates the description of $\mH^k$ as the space of conformal harmonic $k$-forms.

In this article we explicitly construct two $Q$-curvature operators on pseudohermitian five-manifolds.
The first is a $Q$-curvature operator on $d$-closed $(1,1)$-forms;
the second is a $Q$-curvature operator on $\dbbar$-closed $(0,1)$-forms.
We have chosen to focus on these two operators because they are closely related to each other and to the scalar $Q$-curvature~\cite{FeffermanHirachi2003}.
We have chosen to focus on dimension five because, unlike in higher dimensions, it is a manageable task to produce explicit formulas and, unlike in dimension three, the resulting operators are new.

In conformal geometry, there is only one conformally invariant complex of differential forms, namely the de Rham complex.
As pointed out by Branson and Gover~\cite{BransonGover2005}, the GJMS operator $L_k$ gives rise to a detour complex
\begin{equation*}
 \label{eqn:conformal-detour-complex}
 \begin{tikzpicture}[baseline=(current bounding box.center),yscale=0.8,xscale=1.0]
  \node (first-dots) at (0,0) {$\dotsm$};
  \node (k) at (2,0)  {$\Omega^{k}M$};
  \node (second-dots) at (4,0) {$\dotsm$};
  \node (2n-k) at (6,0)  {$\Omega^{2n-k}M$};
  \node (third-dots) at (8,0) {$\dotsm$};
  \draw [->] (first-dots) to node[above] {\tiny $d$} (k);
  \draw [->] (k) to node[above] {\tiny $d$} (second-dots);
  \draw [->] (second-dots) to node[above] {\tiny $d$} (2n-k);
  \draw [->] (2n-k) to node[above] {\tiny $d$} (third-dots);
  \draw [->] (k.south) -- (2,-1) to node[above] {\tiny $\hodge L_k$} (6,-1) -- (2n-k);
 \end{tikzpicture}
\end{equation*}
Here the horizontal complex is the de Rham complex and the complex obtained by taking the ``detour'' along $\hodge L_k$ is the aforementioned detour complex.

In CR geometry, there are many CR invariant complexes of differential forms, among them the Rumin complex~\cite{Rumin1990}, the Kohn--Rossi complexes~\cites{KohnRossi1965,GarfieldLee1998}, and the CR pluriharmonic complex~\cite{Case2021rumin}.
These complexes are all encoded in the \defn{bigraded Rumin complex}~\cites{Case2021rumin,GarfieldLee1998,Garfield2001}
\begin{equation}
 \label{eqn:bigraded-rumin-complex}
 \begin{tikzpicture}[baseline=(current bounding box.center),yscale=0.8,xscale=1.0]
  \node (0_0) at (0,0)  {$\sR^{0,0}$};
  \node (1_0) at (1,1)  {$\sR^{1,0}$};
  \node (0_1) at (1,-1) {$\sR^{0,1}$};
  \node (2_0) at (2,2)  {$\sR^{2,0}$};
  \node (1_1) at (2,0)  {$\sR^{1,1}$};
  \node (0_2) at (2,-2) {$\sR^{0,2}$};
  \node (3_0) at (4,2)  {$\sR^{3,0}$};
  \node (2_1) at (4,0)  {$\sR^{2,1}$};
  \node (1_2) at (4,-2) {$\sR^{1,2}$};
  \node (3_1) at (5,1)  {$\sR^{3,1}$};
  \node (2_2) at (5,-1) {$\sR^{2,2}$};
  \node (3_2) at (6,0)  {$\sR^{3,2}$.};
  \draw [->] (0_0) to node[pos=0.8, left] {\tiny $\db$} (1_0);
  \draw [->] (1_0) to node[pos=0.8, left] {\tiny $\db$} (2_0);
  \draw [->] (0_1) to node[pos=0.8, left] {\tiny $\db$} (1_1);
  \draw [->] (1_1) to node[pos=0.3, left] {\tiny $\db$} (3_0);
  \draw [->] (0_2) to node[pos=0.3, left] {\tiny $\db$} (2_1);
  \draw [->] (2_1) to node[pos=0.8, left] {\tiny $\db$} (3_1);
  \draw [->] (1_2) to node[pos=0.8, left] {\tiny $\db$} (2_2);
  \draw [->] (2_2) to node[pos=0.8, left] {\tiny $\db$} (3_2);
  \draw [->] (0_0) to node[pos=0.8, left] {\tiny $\dbbar$} (0_1);
  \draw [->] (0_1) to node[pos=0.8, left] {\tiny $\dbbar$} (0_2);
  \draw [->] (1_0) to node[pos=0.8, left] {\tiny $\dbbar$} (1_1);
  \draw [->] (1_1) to node[pos=0.3, left] {\tiny $\dbbar$} (1_2);
  \draw [->] (2_0) to node[pos=0.3, left] {\tiny $\dbbar$} (2_1);
  \draw [->] (2_1) to node[pos=0.8, left] {\tiny $\dbbar$} (2_2);
  \draw [->] (3_0) to node[pos=0.8, left] {\tiny $\dbbar$} (3_1);
  \draw [->] (3_1) to node[pos=0.8, left] {\tiny $\dbbar$} (3_2);
  \draw [->] (2_0) to node[pos=0.5, above] {\tiny $\dhor$} (3_0);
  \draw [->] (1_1) to node[pos=0.5, above] {\tiny $\dhor$} (2_1);
  \draw [->] (0_2) to node[pos=0.5, above] {\tiny $\dhor$} (1_2);
 \end{tikzpicture}
\end{equation}
The operator $L_{0,1}^{\dbbar} := i\db\dbbar\db \colon \sR^{0,1} \to \sR^{2,2}$ is the \defn{GJMS operator on $(0,1)$-forms};
it yields a detour complex connecting the Kohn--Rossi complex $\sR^{0,0} \to \sR^{0,1} \to \sR^{0,2}$ to the conjugate Kohn--Rossi complex $\sR^{1,2} \to \sR^{2,2} \to \sR^{3,2}$.
The (complexified) \defn{Rumin complex} is obtained by setting $\sR^k := \bigoplus_{p+q=k} \sR^{p,q}$ and recovering the exterior derivative as $d=\db+\dhor+\dbbar$.

We require two additional ingredients to define our $Q$-curvature operators.
The first is the construction of the bigraded Rumin complex in terms of true complex-valued differential forms~\cite{Case2021rumin}.
The benefit of this construction is that it leads to a balanced $A_\infty$-algebra structure on $\sR^{\bullet,\bullet} := \bigoplus_{p,q} \sR^{p,q}$.
In particular, there are products $\rwedge \colon \sR^k \times \sR^\ell \to \sR^{k+\ell}$ and $\krwedge \colon \sR^{p,q} \times \sR^{r,s} \to \sR^{p+r,q+s}$ which are associative up to homotopy.
The second is the \defn{Lee form}
\begin{equation*}
 \ell^\theta := -iE_{\alpha\bar\beta}\,\theta^\alpha \wedge \theta^{\bar\beta} - \nabla^{\bar\nu}E_{\alpha\bar\nu}\,\theta\wedge \theta^\alpha - \nabla^\mu E_{\mu\bar\beta}\,\theta \wedge \theta^{\bar\beta} 
\end{equation*}
associated to each contact form $\theta$, where $E_{\alpha\bar\beta}:=\frac{1}{4}\bigl( R_{\alpha\bar\beta} - \frac{R}{2}h_{\alpha\bar\beta}\bigr)$ is a multiple of the trace-free part of the pseudohermitian Ricci tensor.
Notably, $\ell^\theta \in \sS^1 \cap \ker d$, $\sS^1 := \Real \sR^{1,1}$.
Denote $\sS^0 := \Real\sR^{0,0}$, so that $i\db\dbbar$ maps $\sS^0$ into $\sS^1$.
The space $\sP$ of \defn{CR pluriharmonic functions} is~\cite{Lee1988} equal to $\ker i\db\dbbar \cap \sS^0$.
An important property of the Lee form is that the \defn{Lee class}
\begin{equation*}
 [\ell^\theta] \in H^1(M;\sP) := \frac{\ker d \cap \sS^1}{\im i\db\dbbar \cap \sS^1} ,
\end{equation*}
$\sS^0 := \Real\sR^{0,0}$ and $\sS^2 := \sR^3$, is independent of the choice of contact form~\cite{Case2021rumin} and vanishes if and only if $(M^5,T^{1,0})$ admits a pseudo-Einstein contact form~\cites{Case2021rumin,Lee1988}.

Our first result is the formula for and the transformation properties of the $Q$-curvature operator on $\dbbar$-closed $(0,1)$-forms.

\begin{theorem}
 \label{Q01-definition-and-properties}
 Let $(M^5,T^{1,0},\theta)$ be a pseudohermitian five-manifold.
 Define the operator $Q_{0,1}^{\dbbar} \colon \sR^{0,1} \to \sR^{2,2}$ by
 \begin{equation*}
  Q_{0,1}^{\dbbar}\omega := \hodge\db^\ast\left(\db\db^\ast + \dbbar\dbbar^\ast + \frac{R}{6}\krwedge\right)\db\omega + 2\ell^\theta \krwedge \db\omega - \db(\omega \krwedge \ell^\theta) \in \sR^{2,2} .
 \end{equation*}
 Then
 \begin{equation*}
  (Q_{0,1}^{\dbbar})^{e^\Upsilon\theta}\omega = (Q_{0,1}^{\dbbar})^\theta\omega + L_{0,1}^{\dbbar}(\Upsilon\omega)
 \end{equation*}
 for all $\omega \in \ker \dbbar \cap \sR^{0,1}$ and all $\Upsilon \in C^\infty(M)$.
\end{theorem}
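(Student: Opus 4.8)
The plan is to reduce the statement to the transformation laws of the individual building blocks appearing in the definition of $Q_{0,1}^{\dbbar}$, and then to verify by a direct computation, organized by those laws, that all curvature contributions cancel and only the third-order term $i\db\dbbar\db(\Upsilon\omega) = L_{0,1}^{\dbbar}(\Upsilon\omega)$ survives. The four ingredients I would record first are: (i) the CR invariance of $\db$ and $\dbbar$, which depend only on $(M^5,T^{1,0})$ and not on the choice of $\theta$; (ii) the transformation of the formal adjoints $\db^\ast,\dbbar^\ast$ and of $\hodge$ under $\hat\theta = e^\Upsilon\theta$, which is dictated by the scaling $\hat\theta\wedge(d\hat\theta)^2 = e^{3\Upsilon}\,\theta\wedge(d\theta)^2$ of the volume form together with the induced rescaling of the fiber inner products on each $\sR^{p,q}$; (iii) the transformation of the Webster scalar curvature $R$; and (iv) the transformation of the Lee form. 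For (iv) the key point, already implicit in the invariance of the Lee class $[\ell^\theta] \in H^1(M;\sP)$, is that $\ell^{\hat\theta} = \ell^\theta + c\,i\db\dbbar\Upsilon$ for a universal constant $c$, so that the $\theta$-dependence of the two Lee terms in $Q_{0,1}^{\dbbar}$ is controlled by a pure $i\db\dbbar$-Hessian.

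With these in hand, I would compute $(Q_{0,1}^{\dbbar})^{\hat\theta}\omega - (Q_{0,1}^{\dbbar})^\theta\omega$ term by term, exploiting the hypothesis $\dbbar\omega = 0$ at every opportunity to discard terms. Since $\db$ and $\dbbar$ are invariant, all of the $\theta$-dependence in the principal term $\hodge\db^\ast(\db\db^\ast + \dbbar\dbbar^\ast + \tfrac{R}{6}\krwedge)\db\omega$ enters through the adjoints, the Hodge star, and $R$. The adjoint transformations produce commutators of the schematic form $[\db^\ast,\Upsilon]$ and $[\dbbar^\ast,\Upsilon]$ acting on $\db\omega$, that is, zeroth-order contraction operators built from $\db\Upsilon$ and $\dbbar\Upsilon$ composed with first derivatives, together with genuine curvature contributions coming from the change in $R$. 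Simultaneously, the two Lee terms $2\ell^\theta\krwedge\db\omega$ and $-\db(\omega\krwedge\ell^\theta)$ contribute $2c\,(i\db\dbbar\Upsilon)\krwedge\db\omega$ and $-c\,\db(\omega\krwedge i\db\dbbar\Upsilon)$ respectively.

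The heart of the argument is to reassemble these pieces. I would repeatedly use the $A_\infty$ Leibniz and homotopy-associativity relations for $\krwedge$ together with $\db^2 = 0$, $\dbbar^2 = 0$, and $\dbbar\omega = 0$ to rewrite the Lee contributions and the $R$-contribution as $\db\dbbar\db$ applied to $\Upsilon\omega$ plus correction terms that exactly match, with opposite sign, the curvature and commutator terms generated by the adjoint and Hodge-star transformations. The role of the specific coefficients --- the $\tfrac16$ on $R$ and the factor $2$ and the sign on the Lee terms --- is precisely to force these cancellations; verifying that the constant $c$ from ingredient (iv) is compatible with the coefficients arising from (ii) and (iii) is the single most delicate bookkeeping step.

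I expect the main obstacle to be exactly this cancellation of the curvature and first-order terms: the transformation of $\db^\ast$, $\dbbar^\ast$, and $\hodge$ generates a proliferation of contraction terms, and organizing them so that the non-principal parts vanish requires careful use of the algebraic identities for $\krwedge$ and of the pseudohermitian commutation identities, all under the standing assumption $\dbbar\omega = 0$. Once the curvature terms cancel, the surviving third-order operator is manifestly $i\db\dbbar\db(\Upsilon\omega)$, which is $L_{0,1}^{\dbbar}(\Upsilon\omega)$ by definition.
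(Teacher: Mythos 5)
Your plan is correct and is essentially the paper's own argument: the transformation law is verified by a direct computation from the transformation rules for the Tanaka--Webster connection, the CR Schouten tensor (note $P=\tfrac{R}{6}$), and the Lee form ($\ell^{e^\Upsilon\theta}=\ell^\theta+i\db\dbbar\Upsilon$), with the resulting pieces reassembled via the Leibniz rules for $\rwedge$ and $\krwedge$ exactly as you describe. The only organizational difference is that the paper routes the computation through the auxiliary operator $\mD$ on $\sR^{1,1}$ (\cref{fourth-order-transformation}) and records the transformation law on all of $\sR^{0,1}$ with explicit error terms involving $\dbbar\omega$ (\cref{Q01-transformation}) before restricting to $\ker\dbbar$, rather than discarding the $\dbbar\omega$-terms from the outset.
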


Of relevance to the $Q$-curvature operator on $d$-closed $(1,1)$-forms is the CR pluriharmonic complex and associated detour complex
\begin{equation*}
 \begin{tikzpicture}[baseline=(current bounding box.center),yscale=0.8,xscale=1.0]
  \node (0) at (0,0)   {$\sS^0$};
  \node (1) at (2,0)   {$\sS^1$};
  \node (2) at (4,0)   {$\sS^2$};
  \node (3) at (6,0)   {$\sS^3$};
  \node (4) at (8,0)   {$\sS^4$,};
  \draw [->] (0)  to node[above] {\tiny $i\db\dbbar$} (1);
  \draw [->] (1)  to node[above] {\tiny $d$} (2);
  \draw [->] (2)  to node[above] {\tiny $d$} (3);
  \draw [->] (3)  to node[above] {\tiny $d$} (4);
  \draw [->] (k.south) -- (2,-1) to node[above] {\tiny $L_{1,1}^{\mathrm{BC}} := d\dhor$} (6,-1) -- (2n-k);
 \end{tikzpicture}
\end{equation*}
where $\sS^k := \Real \sR^{k+1}$ for $k\geq2$.
See \cref{sec:bc} for an explanation of the notation $L_{1,1}^{\mathrm{BC}}$.
The formula for and the transformation properties of the $Q$-curvature operator on $d$-closed $(1,1)$-forms are as follows:

\begin{theorem}
 \label{mQ01-definition-and-properties}
 Let $(M^5,T^{1,0},\theta)$ be a pseudohermitian five-manifold.
 Define the operator $Q_{1,1}^d \colon \sR^{1,1} \to \sR^{3,1} \oplus \sR^{2,2}$ by
 \begin{align*}
  Q_{1,1}^d\omega & := -i\omD\omega + i\mD\omega , \\
  \mD\omega & := \hodge \db^\ast \left( \db\db^\ast + \dbbar\dbbar^\ast + \frac{R}{6} \krwedge \right)\omega + 2\ell^\theta \krwedge \omega .
 \end{align*}
 Then
 \begin{equation*}
  (Q_{1,1}^d)^{e^\Upsilon\theta}\omega = (Q_{1,1}^d)^\theta\omega + L_{1,1}^{\mathrm{BC}}(\Upsilon\omega) .
 \end{equation*}
 for all $\omega \in \ker d \cap \sR^{1,1}$ and all $\Upsilon \in C^\infty(M)$.
\end{theorem}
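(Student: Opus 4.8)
The plan is to reduce the theorem to a single conformal variation formula for $\mD$ on $d$-closed $(1,1)$-forms and then to recover $Q_{1,1}^d$ by conjugation. Writing $Q_{1,1}^d = i\mD - i\omD$ and recalling that $L_{1,1}^{\mathrm{BC}} = d\dhor = \db\dhor + \dbbar\dhor$ (the $\dhor^2$-contribution being absent in these bidegrees, since $\dhor$ does not act on $\sR^{2,1}$), it suffices to prove
\[
 \mD^{e^\Upsilon\theta}\omega - \mD^\theta\omega = -i\,\dbbar\dhor(\Upsilon\omega), \qquad \omega \in \ker d \cap \sR^{1,1}.
\]
Since $\omD$ is the conjugate of $\mD$ and $\Upsilon$ is real, conjugating this identity (complex conjugation interchanges $\db$ and $\dbbar$ and fixes $\dhor$, $R$, $\ell^\theta$) yields $\omD^{e^\Upsilon\theta}\omega - \omD^\theta\omega = i\,\db\dhor(\Upsilon\omega)$; adding $i$ times the first identity to $-i$ times the second then produces exactly $(\db + \dbbar)\dhor(\Upsilon\omega) = d\dhor(\Upsilon\omega) = L_{1,1}^{\mathrm{BC}}(\Upsilon\omega)$.

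First I would assemble the conformal transformation laws under $\theta \mapsto e^\Upsilon\theta$ for each ingredient of $\mD\omega = \hodge\db^\ast(\db\db^\ast + \dbbar\dbbar^\ast + \tfrac{R}{6}\krwedge)\omega + 2\ell^\theta\krwedge\omega$: the Webster scalar curvature $R$, the Lee form (whose variation $\ell^{e^\Upsilon\theta} - \ell^\theta$ is $i\db\dbbar$-exact, this being precisely the statement that the Lee class is well defined in $H^1(M;\sP)$), the formal adjoints $\db^\ast,\dbbar^\ast$, and the Hodge operators $\hodge,\ohodge$, all of which depend on $\theta$ through the induced volume form and admissible coframe. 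Substituting these into $\mD$ and expanding, I would use the hypothesis $d\omega = 0$, equivalently $\db\omega = \dbbar\omega = \dhor\omega = 0$, to discard at the outset every term in which a derivative lands directly on $\omega$.

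The key algebraic input is the pair of Rumin structure relations obtained from the $\sR^{3,1}$- and $\sR^{2,2}$-components of $d^2 = 0$ on $\sR^{1,1}$, namely $\db\dhor = -\dbbar\db$ and $\dbbar\dhor = -\db\dbbar$; these allow the $\db\dbbar$- and $\dbbar\db$-type expressions generated by the variations of the curvature and Lee terms to be rewritten in terms of the horizontal operator $\dhor$, which is what ultimately produces the claimed correction $-i\dbbar\dhor(\Upsilon\omega)$. A useful shortcut for part of the verification is \cref{Q01-definition-and-properties}: since $Q_{0,1}^{\dbbar}\omega = \mD(\db\omega) - \db(\omega\krwedge\ell^\theta)$ for $\omega \in \ker\dbbar\cap\sR^{0,1}$, its already-established transformation law determines $\mD^{e^\Upsilon\theta} - \mD^\theta$ on the subspace $\db(\ker\dbbar\cap\sR^{0,1}) \subset \ker d\cap\sR^{1,1}$, once the variation of the gauge term $\db(\omega\krwedge\ell^\theta)$ has been computed from the $i\db\dbbar$-exactness of $\ell^\theta$ and the behavior of $\krwedge$.

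The main obstacle will be the direct variation of $\mD$ and its precise identification with $-i\dbbar\dhor(\Upsilon\omega)$. One must track the interaction of the metric-dependent composite $\hodge\db^\ast$ with the curvature term $\tfrac{R}{6}\krwedge$ and the Lee term $2\ell^\theta\krwedge$, and check that every contribution not manifestly of the form $\dbbar\dhor(\Upsilon\,\cdot\,)$ cancels after invoking the structure relations and the associativity-up-to-homotopy of the $A_\infty$-products $\rwedge,\krwedge$. The delicate points are the exact numerical coefficients — in particular the coefficient $\tfrac{1}{6}$ of $R$ and the normalization of $\ell^\theta$ must be exactly those for which the curvature and Lee contributions combine into a single $\dhor$-term — and the treatment of the directions in $\ker d\cap\sR^{1,1}$ transverse to $\db(\ker\dbbar\cap\sR^{0,1})$ (spanned in cohomology by the Lee form itself), where \cref{Q01-definition-and-properties} gives no information and the identity must be confirmed by the direct computation alone.
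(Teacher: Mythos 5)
Your proposal is correct and follows essentially the same route as the paper: the paper computes the conformal variation of $\mD$ ingredient by ingredient (\cref{second-order-transformation} for the second-order factor $\hodge(\db\db^\ast+\dbbar\dbbar^\ast+P\rwedge)$, \cref{lee-form-transformation} for the Lee term), obtains in \cref{fourth-order-transformation} that $\mD^{e^\Upsilon\theta}\omega-\mD^\theta\omega = i\db\dbbar(\Upsilon\rwedge\omega)$ on $\ker d\cap\sR^{1,1}$ --- which is exactly your $-i\dbbar\dhor(\Upsilon\omega)$ via the structure relation $\dbbar\dhor=-\db\dbbar$ --- and then combines $\mD$ with its conjugate $\omD$ precisely as you describe to produce $d\dhor(\Upsilon\omega)$. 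The one caution is that your proposed shortcut via \cref{Q01-definition-and-properties} would be circular in the paper's logical order (the transformation law for $Q_{0,1}^{\dbbar}$ is itself deduced from that of $\mD$), but since you keep the direct computation as the primary route this does not affect the argument.
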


Note that $Q_{0,1}^{\dbbar} \equiv \mD\db \mod \im \db$, indicating the close relationship between the $Q$-curvature operators on $\dbbar$-closed $(0,1)$-forms and on $d$-closed $(1,1)$-forms.

The $Q$-curvature operator on $d$-closed $(1,1)$-forms gives a simple formula for the scalar GJMS operator $L_{\mathrm{FH}}$ and the scalar $Q$-curvature $Q_{\mathrm{FH}}$ defined by Fefferman and Hirachi~\cite{FeffermanHirachi2003}.

\begin{theorem}
 \label{recover-scalar}
 Let $(M^5,T^{1,0},\theta)$ be a pseudohermitian five-manifold.
 Then
 \begin{align*}
  \frac{1}{8}\hodge L_{\mathrm{FH}} & = \Lscal := idQ_{1,1}^d\db\dbbar , \\
  \frac{1}{8}\hodge Q_{\mathrm{FH}} & = \Qscal := dQ_{1,1}^d\ell^\theta .
 \end{align*}
\end{theorem}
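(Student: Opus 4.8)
\emph{Strategy.} The plan is to treat the two identities together, reducing each to a scalar statement via $\hodge$ and separating the structural content (transformation laws) from the computational content (matching explicit curvature expressions). Both right-hand sides land in the top-degree component $\sR^{3,2}$, which I identify with a density through $\hodge$; under this identification the displayed equalities become identities relating the Fefferman--Hirachi operator and curvature to $\Lscal$ and $\Qscal$. First I would record that \cref{mQ01-definition-and-properties} applies to both inputs: since $\sS^0 \xrightarrow{i\db\dbbar} \sS^1 \xrightarrow{d} \sS^2$ is a complex we have $d\circ i\db\dbbar = 0$, so $\db\dbbar u \in \ker d \cap \sR^{1,1}$; and $\ell^\theta \in \ker d \cap \sS^1$ by hypothesis. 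Crucially, the Rumin differentials $\db,\dbbar,\dhor$ depend only on the CR structure (the contact distribution $\ker\theta = \ker(e^\Upsilon\theta)$ is unchanged under rescaling), so $\db\dbbar u$ is independent of the choice of contact form.

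\emph{The GJMS identity.} I would show $\Lscal = idQ_{1,1}^d\db\dbbar$ is CR invariant. Applying \cref{mQ01-definition-and-properties} to $\omega = \db\dbbar u$ gives $(Q_{1,1}^d)^{e^\Upsilon\theta}\db\dbbar u = (Q_{1,1}^d)^\theta\db\dbbar u + L_{1,1}^{\mathrm{BC}}(\Upsilon\,\db\dbbar u)$, and because $L_{1,1}^{\mathrm{BC}} = d\dhor$ the extra term is annihilated by the outer $d$, whence $(\Lscal)^{e^\Upsilon\theta} = (\Lscal)^\theta$. On the other hand $\frac18\hodge L_{\mathrm{FH}}$ is CR invariant: the critical operator satisfies $L_{\mathrm{FH}}^{e^\Upsilon\theta}f = e^{-3\Upsilon}L_{\mathrm{FH}}^\theta f$ while $\hodge^{e^\Upsilon\theta} = e^{3\Upsilon}\hodge^\theta$ on functions. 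Both operators annihilate constants (as do $\db\dbbar$ and $L_{\mathrm{FH}}$), so their difference is CR invariant, has no zeroth-order term, and is of order $\le 6$. Computing the leading symbol of $idQ_{1,1}^d\db\dbbar$, a multiple of $\Delta_b^3$, and calibrating it against that of $\frac18\hodge L_{\mathrm{FH}}$ fixes the constant $\frac18$; equality then reduces to matching the remaining curvature terms directly (no lower-order CR-invariant operator of this weight annihilating constants is expected to survive).

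\emph{The $Q$-curvature identity.} Here I would compare transformation laws. Fefferman--Hirachi's curvature obeys $e^{3\Upsilon}Q_{\mathrm{FH}}^{e^\Upsilon\theta} = Q_{\mathrm{FH}}^\theta + L_{\mathrm{FH}}^\theta\Upsilon$; applying $\frac18\hodge$ and using the GJMS identity just proved yields $(\tfrac18\hodge Q_{\mathrm{FH}})^{e^\Upsilon\theta} = (\tfrac18\hodge Q_{\mathrm{FH}})^\theta + i\,d(Q_{1,1}^d)^\theta\db\dbbar\Upsilon$. On the other side the Lee form transforms by $\ell^{e^\Upsilon\theta} = \ell^\theta + i\db\dbbar\Upsilon$ --- exactly the normalization making $[\ell^\theta]$ well defined modulo $\im i\db\dbbar$ --- so feeding $\ell^{e^\Upsilon\theta} \in \ker d \cap \sS^1$ into $\Qscal = dQ_{1,1}^d\ell^\theta$ and again using $d\circ L_{1,1}^{\mathrm{BC}} = 0$ gives $(\Qscal)^{e^\Upsilon\theta} = (\Qscal)^\theta + i\,d(Q_{1,1}^d)^\theta\db\dbbar\Upsilon$. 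Hence $\frac18\hodge Q_{\mathrm{FH}} - \Qscal$ is independent of $\theta$, i.e.\ a pointwise CR invariant, which I would show vanishes by a direct evaluation: computing $(Q_{1,1}^d)^\theta\ell^\theta$ in a Tanaka--Webster frame and matching it against the Fefferman--Hirachi formula for $Q_{\mathrm{FH}}$. As a consistency check, $\Qscal$ is exact and so integrates to zero over closed $M$, in agreement with the (known) vanishing of the total Fefferman--Hirachi $Q$-curvature.

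\emph{Main obstacle.} The hard part will be the explicit evaluation common to both identities: unwinding $(Q_{1,1}^d)^\theta = -i\omD + i\mD$ forces one to compute the Rumin codifferentials $\db^\ast,\dbbar^\ast$, the balanced products $\krwedge$ with $\ell^\theta$, and the curvature term $\tfrac{R}{6}\krwedge$ on the specific inputs $\db\dbbar u$ and $\ell^\theta$, then apply $d$ and $\hodge$ and reconcile with Fefferman--Hirachi's conventions. The transformation-law arguments reduce this to two finite tasks --- the leading symbol in the GJMS identity and a single pointwise invariant in the $Q$-curvature identity --- but essentially all of the bookkeeping lives there; in particular, verifying that the coefficient of the Lee-form variation is precisely $1$, so that the two transformation laws coincide with no residual multiple of $\Lscal$, is the key normalization to pin down.
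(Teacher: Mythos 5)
Your transformation-law reductions are correct and in fact reproduce what the paper proves separately (\cref{gjms-and-q-transformation} and the proof of \cref{L0-is-standard}): the identity $(\Qscal)^{e^\Upsilon\theta}=(\Qscal)^\theta+(\Lscal)^\theta\Upsilon$ together with Fefferman--Hirachi's $e^{3\Upsilon}Q_{\mathrm{FH}}^{e^\Upsilon\theta}=Q_{\mathrm{FH}}^\theta+L_{\mathrm{FH}}^\theta\Upsilon$ does reduce the theorem to a single pointwise identification. Note, however, that your ordering is the less efficient one: if you establish $Q_{\mathrm{FH}}=8\hodge\Qscal$ first (one computation), then $L_{\mathrm{FH}}=8\hodge\Lscal$ falls out of the two transformation laws with no symbol-matching and no separate evaluation --- this is exactly the paper's route. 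As you have arranged it, you need the GJMS identity before you can conclude that $\frac18\hodge Q_{\mathrm{FH}}-\Qscal$ is $\theta$-independent, so you end up owing two direct computations where one suffices; and the parenthetical hope that ``no lower-order CR-invariant operator of this weight annihilating constants survives'' is not an argument --- CR invariance plus leading symbol does not determine a sixth-order operator, as there are plenty of CR-invariant lower-order corrections built from the Chern--Moser tensor and its derivatives.

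The genuine gap is that the one computation that constitutes the substance of the theorem is nowhere carried out, nor is it indicated how one would get a usable expression for $Q_{\mathrm{FH}}$ to compare against. The paper does this in \cref{Q0-is-standard}: it starts from Case--Gover's tractor formula $\frac{1}{16}Z_{\bar E}Z_AZ_{\bar B}Q_{\mathrm{FH}}=\bD_{\bar E}K_{A\bar B}-Z_{\bar E}S_{A\bar BC\bar D}K^{\bar DC}$, unwinds the tractor $D$-operator and Weyl tractor, applies the dimension-specific identity $P_\alpha{}^\mu P_{\mu\bar\beta}-\frac12 P^{\bar\nu\mu}P_{\mu\bar\nu}h_{\alpha\bar\beta}=PE_{\alpha\bar\beta}$ and the Tanaka--Webster commutator identities, and only then matches the result term-by-term against the frame formula~\eqref{eqn:mD-frame} for $\mD$ applied to $\ell^\theta$. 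Without some such explicit source for $Q_{\mathrm{FH}}$ (ambient metric or tractor calculus), ``computing $(Q_{1,1}^d)^\theta\ell^\theta$ in a Tanaka--Webster frame and matching it against the Fefferman--Hirachi formula'' is a restatement of the theorem rather than a proof of it. The factor $\frac18$, which you propose to pin down by a leading-symbol calibration of the GJMS operators, is likewise only determined in the paper by this explicit matching.
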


It follows readily from \cref{mQ01-definition-and-properties} that $\Lscal$ is CR invariant and
\begin{equation*}
 (\Qscal)^{e^\Upsilon\theta} = (\Qscal)^\theta + (\Lscal)^\theta \Upsilon
\end{equation*}
for all contact forms $\theta$ and $e^\Upsilon\theta$ on a pseudohermitian five-manifold.
\Cref{recover-scalar} also recovers two well-known facts about the scalar GJMS operator.
First~\cite{FeffermanHirachi2003}, it holds that $\sP \subset \ker \Lscal$.
Second~\cite{GoverGraham2005}, the identity $\Lscal = -2\db Q_{0,1}^{\dbbar}\dbbar$ implies that $\hodge \Lscal$ is formally self-adjoint.
In particular, if $(M^5,T^{1,0})$ is a closed CR five-manifold and $u\in\sP$, then $\int u \Qscal$ is independent of the choice of contact form.

Since the scalar $Q$-curvature is the constant term of the $P^\prime$-operator~\cites{CaseYang2012,Hirachi2013}, one should regard $\Lscal = L_{0,0}^{\dbbar}$ and $\Qscal = Q_{0,0}^{\dbbar}1$, though we do not try to define $Q_{0,0}^{\dbbar}$ in general in this article.

Following Branson and Gover~\cite{BransonGover2005}, we define the spaces of CR $\dbbar$-harmonic $(0,1)$-forms and CR $d$-harmonic $(1,1)$-forms by
\begin{align*}
 \HQ^{0,1} & := \left\{ \omega \in \sR^{0,1} \suchthatcolon \dbbar\omega = 0, \db Q_{0,1}^{\dbbar}\omega = 0 \right\} , \\
 \mHQ^{1,1} & := \left\{ \omega \in \sR^{1,1} \suchthatcolon d\omega = 0, dQ_{1,1}^d\omega = 0 \right\} ,
\end{align*}
respectively.
It is clear that the identity map induces CR invariant morphisms
\begin{equation*}
 \HQ^{0,1} \to H^{0,1}(M) := \frac{\ker \dbbar \cap \sR^{0,1}}{\im \dbbar \cap \sR^{0,1}}
\end{equation*}
and
\begin{equation*}
 \mHQ^{1,1} \to \HBC^{1,1}(M) := \frac{\ker d \cap \sR^{1,1}}{\im i\db\dbbar \cap \sR^{1,1}} .
\end{equation*}
The notation $\HBC^{1,1}(M)$ is inspired by the resemblance of this group with the corresponding Bott--Chern cohomology group in complex geometry~\cite{BottChern1965}.
Taking real parts also yields a morphism $\Real\mHQ^{1,1} \to H^1(M;\sP)$.
This latter morphism gives a cohomological characterization of CR five-manifolds which admit a \defn{$Q$-flat contact form};
i.e.\ a contact form $\theta$ for which $(\Qscal)^\theta = 0$.

\begin{theorem}
 \label{scalar-q-flat-characterization}
 An orientable CR five-manifold $(M^5,T^{1,0})$ admits a $Q$-flat contact form if and only if $[\ell^\theta] \in \im \bigl( \Real\mHQ^{1,1} \to H^1(M;\sP) \bigr)$.
\end{theorem}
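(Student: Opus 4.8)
The plan is to prove both implications by directly unwinding the definitions, using as the two essential inputs (i) the formula $(\Qscal)^\theta = dQ_{1,1}^d\ell^\theta$ together with the transformation law $(\Qscal)^{e^\Upsilon\theta} = (\Qscal)^\theta + (\Lscal)^\theta\Upsilon$ and the identity $\Lscal = idQ_{1,1}^d\db\dbbar$, all of which follow from \cref{recover-scalar} and \cref{mQ01-definition-and-properties}; and (ii) the facts that $\ell^\theta \in \sS^1 \cap \ker d$ and that the Lee class $[\ell^\theta] \in H^1(M;\sP)$ is independent of the choice of contact form. Orientability enters only to guarantee that a global contact form exists and that any two contact forms of fixed co-orientation are related by $\theta \mapsto e^\Upsilon\theta$ with $\Upsilon \in \sS^0 = C^\infty(M)$; I would record this normalization at the outset.

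For the forward implication, suppose $\theta_0$ is a $Q$-flat contact form, so that $(\Qscal)^{\theta_0} = dQ_{1,1}^d\ell^{\theta_0} = 0$. Since $\ell^{\theta_0} \in \sS^1 = \Real\sR^{1,1}$ is $d$-closed and is annihilated by $dQ_{1,1}^d$, it lies in $\Real\mHQ^{1,1}$ by the very definition of $\mHQ^{1,1}$. Its image under the morphism $\Real\mHQ^{1,1} \to H^1(M;\sP)$ is the class $[\ell^{\theta_0}]$, which equals $[\ell^\theta]$ by the contact-form-independence of the Lee class. Hence $[\ell^\theta] \in \im\bigl(\Real\mHQ^{1,1} \to H^1(M;\sP)\bigr)$.

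For the reverse implication, suppose $[\ell^\theta]$ is the image of some $\omega \in \Real\mHQ^{1,1}$, so that $\omega$ is $d$-closed, satisfies $dQ_{1,1}^d\omega = 0$, and obeys $\omega - \ell^\theta \in \im i\db\dbbar \cap \sS^1$. Writing $\omega - \ell^\theta = i\db\dbbar\Upsilon$ for a real $\Upsilon \in \sS^0 = C^\infty(M)$, I would compute directly, using the transformation law and the formulas above, that $(\Qscal)^{e^\Upsilon\theta} = (\Qscal)^\theta + (\Lscal)^\theta\Upsilon = dQ_{1,1}^d\ell^\theta + idQ_{1,1}^d\db\dbbar\Upsilon = dQ_{1,1}^d(\ell^\theta + i\db\dbbar\Upsilon) = dQ_{1,1}^d\omega = 0$. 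Thus $e^\Upsilon\theta$ is a $Q$-flat contact form. Note that this direction does not require the explicit transformation law of the Lee form itself, only the transformation law of $\Qscal$.

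The argument is essentially formal once the earlier theorems are in hand, so there is no serious analytic obstacle; the real content is carried by \cref{recover-scalar} and \cref{mQ01-definition-and-properties}. The steps requiring care — and which I would state explicitly — are the cohomological bookkeeping ones: that $\im i\db\dbbar \cap \sS^1$ coincides with $\{\, i\db\dbbar\Upsilon \suchthat \Upsilon \in \sS^0 \,\}$, so that the representative $\Upsilon$ produced in the reverse direction is a genuine real smooth function and $e^\Upsilon\theta$ is a bona fide contact form; that the morphism $\Real\mHQ^{1,1} \to H^1(M;\sP)$ is the one induced by $\omega \mapsto [\omega]$, so that the two directions use compatible representatives; and that in the forward direction every $Q$-flat contact form may be taken directly as $\theta_0$, which is where orientability (existence of a global contact form and hence of a well-defined Lee class) is used.
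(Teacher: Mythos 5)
Your proof is correct and follows essentially the same route as the paper: the forward direction places $\ell^{\theta_0}$ in $\Real\mHQ^{1,1}$ directly from the definitions and the contact-form independence of the Lee class, and the reverse direction applies $dQ_{1,1}^d$ to the relation $\omega = \ell^\theta + i\db\dbbar\Upsilon$ and invokes the transformation law $(\Qscal)^{e^\Upsilon\theta} = (\Qscal)^\theta + (\Lscal)^\theta\Upsilon$. The only cosmetic difference is that the paper allows a complex potential $f$ and takes real parts at the end, producing the conformal factor $e^{-\Real f}$, whereas you normalize to a real $\Upsilon$ from the outset.
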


We do not know if either $\HQ^{0,1} \to H^{0,1}(M)$ or $\mHQ^{1,1} \to \HBC^{1,1}(M)$ is surjective.
The surjectivity of the corresponding map~\cites{AubryGuillarmou2011,BransonGover2005} on conformal manifolds is also unknown in general.

There is another characterization~\cite{Takeuchi2020b} of CR manifolds which admit a $Q$-flat contact form:
A closed, strictly pseudoconvex manifold admits a $Q$-flat contact form if and only if $\Qscal$ annihilates $\ker \Lscal$.
It is presently known that $\Qscal$ annihilates constants~\cite{Marugame2017}.
We improve this latter result by proving that $\Qscal$ annihilates the space of CR pluriharmonic functions.
More precisely:

\begin{theorem}
 \label{q00-integral}
 Let $(M^5,T^{1,0},\theta)$ be a closed CR five-manifold.
 Then
 \begin{equation}
  \label{eqn:q00-integral}
  \int_M u \Qscal = \frac{\pi^2}{2}\lp [d_b^cu] \cup c_1^2(T^{1,0}), [M] \rp
 \end{equation}
 for all $u\in\sP$, where $d_b^cu := i\dbbar u - i\db u$ and $[M]$ is the fundamental class of $M$.
 In particular, if $(M^5,T^{1,0})$ is strictly pseudoconvex, then $\int u \Qscal = 0$ for all $u\in\sP$.
\end{theorem}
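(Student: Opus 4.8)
The plan is to combine the representation $\Qscal = dQ_{1,1}^d\ell^\theta$ from \cref{recover-scalar} with an integration by parts and a Chern--Weil identification of the resulting curvature integrand. The excerpt already records that, for $u\in\sP$ on a closed manifold, the number $\int_M u\Qscal$ is independent of the contact form; since the right-hand side of \eqref{eqn:q00-integral} is manifestly a CR invariant as well, I am free to compute $\Qscal$ in whatever pseudohermitian gauge is most convenient and to regard both sides as invariant linear functionals of $u\in\sP$.

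Because $d$ is a derivation of the binary product $\rwedge$ (the quadratic $A_\infty$-relation), and $\int_M d(\,\cdot\,)=0$ on the closed manifold $M$, I obtain
\begin{equation*}
 \int_M u\,\Qscal = \int_M u\rwedge dQ_{1,1}^d\ell^\theta = -\int_M du\rwedge Q_{1,1}^d\ell^\theta .
\end{equation*}
Writing $du=\db u+\dbbar u$ and $d_b^cu=i\dbbar u-i\db u$, and substituting the explicit formulas for $\mD$ (hence $Q_{1,1}^d=-i\omD+i\mD$) and for $\ell^\theta$, I would then invoke the pluriharmonicity $i\db\dbbar u=0$ together with the pseudohermitian structure equations to discard every contribution that is not of top bidegree. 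The target of this step is to show that, modulo exact forms, the integrand collapses to a fixed multiple of $d_b^cu\rwedge\rho\rwedge\rho$, where $\rho$ is the closed (genuine-form) first Chern representative assembled from the Tanaka--Webster curvature, i.e. the Ricci two-form $iR_{\alpha\bar\beta}\theta^\alpha\wedge\theta^{\bar\beta}$ together with the divergence corrections $\nabla^\mu E_{\mu\bar\beta}$, $\nabla^{\bar\nu}E_{\alpha\bar\nu}$ already visible in the Lee form.

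Once the integrand is in the shape $c\,d_b^cu\rwedge\rho\rwedge\rho$, the remaining step is normalization bookkeeping: $d_b^cu$ is closed for $u\in\sP$ and represents $[d_b^cu]\in H^1(M;\bR)$, while a fixed multiple of $\rho$ represents $c_1(T^{1,0})$, so the integral equals $\langle[d_b^cu]\cup c_1^2(T^{1,0}),[M]\rangle$ up to an explicit constant; combining the factor $\tfrac18$ relating $\Qscal$ to $Q_{\mathrm{FH}}$ in \cref{recover-scalar} with the $(2\pi)^2$ of the Chern normalization produces $\tfrac{\pi^2}{2}$. I expect this middle step to be the main obstacle: one must track every $\theta$-correction term in the genuine-form realization of the Rumin products and verify that the pluriharmonic condition eliminates precisely the non-curvature pieces so that a clean multiple of $\rho\rwedge\rho$ survives. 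I would cross-check the universal constant against Marugame's case $u\equiv\text{const}$, where $d_b^cu=0$ and both sides vanish.

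For the strictly pseudoconvex assertion it suffices to show the characteristic pairing vanishes. Here the contact distribution is four-dimensional, so any purely horizontal $(2,2)$-form wedged with the horizontal one-form $d_b^cu$ is a horizontal five-form and hence zero; thus only the $\theta$-component of the chosen $c_1$-representative can contribute. I would complete the argument by showing that, for a definite Levi form, one may pair against a closed representative of $c_1(T^{1,0})$ whose $\theta$-part does not contribute---either by choosing a divergence-free (adapted) representative, or, when $M$ bounds a strictly pseudoconvex filling $X$, by extending the pluriharmonic $u$ and using $i_\ast[M]=\partial[X]$ to force $\langle[d_b^cu]\cup c_1^2,[M]\rangle=0$. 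Establishing this vanishing intrinsically for \emph{every} closed strictly pseudoconvex CR five-manifold is the secondary difficulty I would anticipate.
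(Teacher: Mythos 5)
Your opening reduction coincides with the paper's: integrate by parts to get $\int_M u\,\Qscal = -\int_M du \rwedge Q_{1,1}^d\ell^\theta$, use pluriharmonicity to collapse the integrand to a fixed multiple of $d_b^cu \rwedge \ell^\theta \rwedge \ell^\theta$, and identify the result as a characteristic number. However, you leave the collapse as a hoped-for brute-force computation with the explicit formula for $\mD$, whereas there is a structural mechanism you should isolate. Since $du = \db u + \dbbar u$ and $d_b^cu = i\dbbar u - i\db u$, bidegree considerations give $-\int_M du \rwedge Q_{1,1}^d\ell^\theta = \int_M d_b^cu \rwedge R_{1,1}^d\ell^\theta$, where $R_{1,1}^d = \omD + \mD$ is the \emph{real} part of $\mD$ rather than the imaginary part appearing in $Q_{1,1}^d$. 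By \eqref{eqn:mR}, $R_{1,1}^d = d\hodge(\db\db^\ast + \dbbar\dbbar^\ast + P\rwedge) + 2\ell^\theta\rwedge$, so after one further integration by parts the fourth-order term pairs against $dd_b^cu = 0$ and only $2\int_M d_b^cu \rwedge \ell^\theta \rwedge \ell^\theta$ survives; no term-by-term analysis of the curvature expression is needed, and the correct representative is the Lee form $\ell^\theta$ (built from $E_{\alpha\bar\beta}$), not the Ricci form. Your normalization bookkeeping is also off: the factor $\tfrac18$ relating $\Qscal$ to $Q_{\mathrm{FH}}$ plays no role, since the theorem is stated for $\Qscal$ itself; the constant $\tfrac{\pi^2}{2}$ is simply $2\cdot\bigl(-\tfrac{\pi}{2}\bigr)^2$ coming from \eqref{eqn:lee-class-in-H2}.

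The genuine gap is the final assertion for strictly pseudoconvex manifolds. The paper obtains it by citing Takeuchi's theorem that $c_1^2(T^{1,0}) = 0$ in $H^4(M;\bR)$ on every closed, strictly pseudoconvex CR five-manifold; this is a nontrivial external input. Your first proposed substitute --- that a horizontal $(2,2)$-form wedged with $d_b^cu$ vanishes pointwise, so one need only arrange that the $\theta$-component of a $c_1$-representative does not contribute --- cannot work: it nowhere uses strict pseudoconvexity, so if valid it would force $\lp [d_b^cu] \cup c_1^2(T^{1,0}), [M]\rp = 0$ on every closed CR five-manifold, in which case the hypothesis would be vacuous and the theorem would not single out this case; the pairing is a de Rham cup product and is not killed by a degeneracy of one representative. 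Your second substitute, via a strictly pseudoconvex filling and $\partial[X] = i_\ast[M]$, is essentially the idea behind the cited theorem, but it presupposes that $M$ bounds such a filling, which rests on embeddability and is precisely the content you would need to prove; as you acknowledge, this step is not established in your proposal.
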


The final conclusion of \cref{q00-integral} follows from~\eqref{eqn:q00-integral} and the fact~\cite{Takeuchi2018} that $c_1^2(T^{1,0})=0$ in $H^4(M;\bR)$ on all closed, strictly pseudoconvex CR five-manifolds.
Since there are~\cite{Takeuchi2017} closed, strictly pseudoconvex CR manifolds with $\sP \subsetneq \ker \Lscal$, \cref{q00-integral} does not answer the question of whether all closed, strictly pseudoconvex CR manifolds admit $Q$-flat contact forms.

Yuya Takeuchi~\cite{Takeuchi2020c} has proven an analogue of~\eqref{eqn:q00-integral} on closed, strictly pseudoconvex $(2n+1)$-manifolds, with $c_1^2(T^{1,0})$ replaced by $c_1^n(T^{1,0})$.

One reason for the interest in $Q$-flat contact forms is that all pseudo-Einstein contact forms are $Q$-flat~\cite{FeffermanHirachi2003}.
Jack Lee conjectured~\cite{Lee1988} that every closed, strictly pseudoconvex CR manifold with trivial real first Chern class admits a pseudo-Einstein contact form.
Even the following weaker form of his conjecture is open:

\begin{conjecture}[Weak Lee Conjecture]
 \label{weak-lee-conjecture}
 Let $(M^5,T^{1,0})$ be a closed, strictly pseudoconvex five-manifold.
 If $c_1(T^{1,0})=0$ in $H^2(M;\bR)$, then $(M^5,T^{1,0})$ admits a $Q$-flat contact form.
\end{conjecture}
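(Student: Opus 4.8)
The plan is to attack the conjecture through Takeuchi's characterization~\cite{Takeuchi2020b}, which asserts that a closed, strictly pseudoconvex five-manifold admits a $Q$-flat contact form if and only if $\Qscal$ annihilates $\ker\Lscal$. Thus the conjecture reduces to showing that $\int_M u\,\Qscal = 0$ for every $u\in\ker\Lscal$. \Cref{q00-integral} already establishes this for $u\in\sP$; indeed strict pseudoconvexity forces $c_1^2(T^{1,0})=0$~\cite{Takeuchi2018}, so the right-hand side of~\eqref{eqn:q00-integral} vanishes and the hypothesis $c_1(T^{1,0})=0$ is not even needed on $\sP$. The entire difficulty is therefore concentrated in the quotient $\ker\Lscal/\sP$, which by the examples of~\cite{Takeuchi2017} can be nontrivial. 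It is precisely here that I would use $c_1(T^{1,0})=0$.

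To exploit the hypothesis I would first convert membership in $\ker\Lscal$ into a cohomological condition via the factorization $\Lscal = -2\db Q_{0,1}^{\dbbar}\dbbar$. Given $u\in\ker\Lscal$, this gives $\db Q_{0,1}^{\dbbar}\dbbar u = 0$; since also $\dbbar(\dbbar u)=0$, the form $\dbbar u$ lies in $\HQ^{0,1}$ and is manifestly $\dbbar$-exact, hence represents the zero class under $\HQ^{0,1}\to H^{0,1}(M)$. If this morphism were injective we would conclude $\dbbar u=0$, so that $u$ is a real CR function and thus $u\in\sP$; in other words, injectivity of $\HQ^{0,1}\to H^{0,1}(M)$ forces $\ker\Lscal=\sP$, after which \cref{q00-integral} closes the argument. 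The role I would assign to $c_1(T^{1,0})=0$ is to establish exactly this injectivity: the hypothesis makes $[\ell^\theta]$ primitive, in the sense that its image in $H^2(M;\bR)$ vanishes, and I would try to leverage a subelliptic Hodge decomposition for the $\dbbar$-detour complex --- joining $\sR^{0,0}\to\sR^{0,1}$ to its conjugate through $Q_{0,1}^{\dbbar}$ --- to identify $\HQ^{0,1}$ with $H^{0,1}(M)$ when the canonical bundle is topologically trivial.

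An equivalent line of attack runs through \cref{scalar-q-flat-characterization}: it suffices to lift the Lee class $[\ell^\theta]\in H^1(M;\sP)$ to $\Real\mHQ^{1,1}$. Under $c_1(T^{1,0})=0$ the class $[\ell^\theta]$ maps to zero in $H^2(M;\bR)$, and one seeks a $d$-closed $(1,1)$-form $\omega$ with $dQ_{1,1}^d\omega=0$ whose real part is cohomologous to $\ell^\theta$ modulo $\im i\db\dbbar$. This is an existence problem for the subelliptic operator $\omega\mapsto dQ_{1,1}^d\omega$: Fredholm theory should produce a solution modulo a finite-dimensional obstruction identified with $\ker\Lscal$, and the task is to annihilate the obstruction using only the triviality of $c_1(T^{1,0})$. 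The conditional theorem circumvents this by assuming $Q_{0,1}^{\dbbar}\geq 0$, whose positivity supplies the coercivity that makes the indefinite sixth-order detour complex behave like an elliptic one and forces $\ker\Lscal=\sP$ outright.

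I expect the main obstacle to be exactly the removal of this positivity input. Both strategies bottom out at an unresolved subelliptic Hodge theory --- whether $\HQ^{0,1}\to H^{0,1}(M)$, equivalently $\Real\mHQ^{1,1}\to H^1(M;\sP)$, is an isomorphism --- which the excerpt flags as unknown even in the conformal case. Establishing a Hodge decomposition for this higher-order, sign-indefinite complex without a positivity assumption appears to require genuinely new analysis rather than the explicit computations underlying \cref{mQ01-definition-and-properties,q00-integral}; alternatively one might seek a cohomological formula for $\int_M u\,\Qscal$ valid on all of $\ker\Lscal$, extending~\eqref{eqn:q00-integral} beyond $\sP$, whereas the proof of \cref{q00-integral} relies on $i\db\dbbar u=0$. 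Until one of these ingredients is available, the conjecture remains open.
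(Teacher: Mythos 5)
You correctly treat this statement as what it is --- an open conjecture with no proof in the paper --- and your overall map of the territory matches the paper's own discussion: the reduction via Takeuchi's criterion that $\Qscal$ annihilate $\ker\Lscal$; the observation that \cref{q00-integral} disposes of $\sP$ without using $c_1(T^{1,0})=0$ (so the whole difficulty lives in $\ker\Lscal/\sP$, which is nontrivial by Takeuchi's examples); and your second strategy --- lifting $[\ell^\theta]$ through $H^{0,1}(M)$ via \cref{scalar-q-flat-characterization} and \cref{hodge-diagram-commutes-pluriharmonic}, with surjectivity of $\HQ^{0,1}\to H^{0,1}(M)$ as the missing analytic input that the paper buys with the positivity hypothesis~\eqref{eqn:q01-pairing} --- is precisely the architecture of \cref{q01-nonnegative}.

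Two concrete steps in your proposal would nevertheless fail. First, the morphism $\HQ^{0,1}\to H^{0,1}(M)$ is \emph{never} injective on a closed, strictly pseudoconvex five-manifold, so no hypothesis on $c_1(T^{1,0})$ can deliver the injectivity on which your first strategy rests: its kernel contains $\HB^{0,1}\supset\dbbar\bigl(\ker\db\cap\sR^{0,0}\bigr)$, which is infinite-dimensional by embeddability, whereas $H^{0,1}(M)$ is finite-dimensional by \cref{hodge}; in particular, no identification of $\HQ^{0,1}$ with $H^{0,1}(M)$ is possible, trivial canonical bundle or not. The meaningful statement is \cref{mH-to-H-surjective}: unconditionally $\dim \HQ^{0,1}/\HB^{0,1}=\dim H^{0,1}(M)$, and surjectivity holds if and only if $\HB^{0,1}=\ker\bigl(\HQ^{0,1}\to H^{0,1}(M)\bigr)$. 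Even the repaired version of your inference does not yield $\ker\Lscal=\sP$: for $u\in\ker\Lscal$ one has $\dbbar u\in\HQ^{0,1}$ mapping to zero, and injectivity of the induced map $\HQ^{0,1}/\HB^{0,1}\to H^{0,1}(M)$ gives only $\dbbar u\in\HB^{0,1}$, i.e.\ $Q_{0,1}^{\dbbar}\dbbar u\in\im\db$, not $\dbbar u=0$.

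Second, and symptomatic of the same conflation, your claim that positivity ``supplies the coercivity'' that ``forces $\ker\Lscal=\sP$ outright'' is false. The proof of \cref{Q-nonnegative-consequence} uses nonnegativity of $\kQ_{0,1}$, equation~\eqref{eqn:kL00-from-Q-form}, and Serre duality only to place $\dbbar f$, $f\in\ker\Lscal$, in the radical of the semidefinite form, hence in $\HB^{0,1}$; it does not shrink $\ker\Lscal$ at all. Indeed, if positivity did imply $\ker\Lscal=\sP$, then \cref{Q0-flat-from-trivial-kernel} would make the hypothesis $c_1(T^{1,0})=0$ in \cref{q01-nonnegative} superfluous, contrary to the role the paper assigns it. Your closing diagnosis --- that the obstruction is an unresolved Hodge theory for the detour complex without a positivity input, open even in the conformal setting --- is accurate, provided the target is stated as surjectivity of $\HQ^{0,1}\to H^{0,1}(M)$, equivalently $\HB^{0,1}=\ker\bigl(\HQ^{0,1}\to H^{0,1}(M)\bigr)$, rather than injectivity or an isomorphism.
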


One can similarly formulate the Weak Lee Conjecture in general dimensions.
The Weak Lee Conjecture is only known in dimension three provided one makes the additional assumption of embeddability~\cite{Takeuchi2019}.

Of relevance to us is the fact~\cites{Case2021rumin,Garfield2001} that a CR manifold has trivial real first Chern class if and only if $[\ell^\theta] \in \im \bigl( H^{0,1}(M) \to H^1(M;\sP) \bigr)$.
\Cref{scalar-q-flat-characterization} then implies that the Weak Lee Conjecture holds on manifolds for which the morphism $\HQ^{0,1} \to H^{0,1}(M)$ is surjective.
Following Aubry and Guillarmou~\cite{AubryGuillarmou2011}, we prove surjectivity under the assumption that the $Q$-curvature operator on $\dbbar$-closed $(0,1)$-forms is nonnegative.

\begin{theorem}
 \label{q01-nonnegative}
 Let $(M^5,T^{1,0})$ be a closed, strictly pseudoconvex five-manifold such that
 \begin{equation}
  \label{eqn:q01-pairing}
  \int_M \omega \rwedge \overline{Q_{0,1}^{\dbbar}\tau} \geq 0
 \end{equation}
 for all $\omega, \tau \in \ker \dbbar \cap \sR^{0,1}$.
 Then $\HQ^{0,1} \to H^{0,1}(M)$ is surjective.
 In particular, if $c_1(T^{1,0})=0$ in $H^2(M;\bR)$, then $(M^5,T^{1,0})$ admits a $Q$-flat contact form.
\end{theorem}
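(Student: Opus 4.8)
The plan is to adapt the argument of Aubry and Guillarmou~\cite{AubryGuillarmou2011}: I would fix an arbitrary class in $H^{0,1}(M)$, reduce the existence of a $Q$-harmonic representative to solving a single scalar equation, and use the nonnegativity hypothesis only to verify that this equation is solvable. Concretely, I would fix $\omega_0 \in \ker\dbbar \cap \sR^{0,1}$ representing the class, so that every competing representative has the form $\omega_0 + \dbbar u$ with $u \in \sR^{0,0}$. Since $Q_{0,1}^{\dbbar}$ is linear on $\ker\dbbar$, the identity $\Lscal = -2\db Q_{0,1}^{\dbbar}\dbbar$ recorded after \cref{recover-scalar} gives
\[
 \db Q_{0,1}^{\dbbar}(\omega_0 + \dbbar u) = \db Q_{0,1}^{\dbbar}\omega_0 - \tfrac12 \Lscal u .
\]
Thus producing a representative lying in $\HQ^{0,1}$ is exactly the problem of solving $\Lscal u = 2\,\db Q_{0,1}^{\dbbar}\omega_0$ for $u \in \sR^{0,0}$.

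Next I would set up the Fredholm framework for this equation. By \cref{recover-scalar} the operator $\Lscal = \tfrac18\hodge L_{\mathrm{FH}}$ is, up to a constant, the Hodge dual of the critical Fefferman--Hirachi operator, and $\hodge\Lscal$ is a real, formally self-adjoint operator on $\sR^{0,0}$ (as noted after \cref{recover-scalar}). On a closed \emph{strictly pseudoconvex} five-manifold it is maximally subelliptic, so invoking the Heisenberg (Folland--Stein) calculus I would conclude that $\Lscal$ is Fredholm with finite-dimensional kernel and that its range is the annihilator of $\ker\Lscal$ under the pairing $\llp \alpha, \beta \rrp := \int_M \alpha \rwedge \overline{\beta}$. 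Consequently the equation is solvable if and only if $\llp \db Q_{0,1}^{\dbbar}\omega_0, v \rrp = 0$ for every $v \in \ker\Lscal$.

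It is here that the nonnegativity hypothesis enters, through a Cauchy--Schwarz argument. Let $\mQ(\alpha,\beta) := \int_M \alpha \rwedge \overline{Q_{0,1}^{\dbbar}\beta}$; the formal self-adjointness of $Q_{0,1}^{\dbbar}$ on $\ker\dbbar$ makes $\mQ$ a Hermitian form, which \eqref{eqn:q01-pairing} asserts is nonnegative. For $v \in \ker\Lscal$, integration by parts (Stokes on the closed manifold $M$) together with self-adjointness yields, up to sign and conjugation, $\llp \db Q_{0,1}^{\dbbar}\omega_0, v \rrp = \mQ(\omega_0, \dbbar v)$ and $\mQ(\dbbar v, \dbbar v) = \mp\tfrac12 \llp \Lscal v, v\rrp = 0$. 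Since $\mQ$ is a nonnegative Hermitian form, every null vector lies in its radical, so $\mQ(\omega_0, \dbbar v) = 0$; this is precisely the solvability condition. The resulting solution $u$ gives $\omega := \omega_0 + \dbbar u \in \HQ^{0,1}$ representing the chosen class, proving that $\HQ^{0,1} \to H^{0,1}(M)$ is surjective. The final assertion then follows, as explained after \cref{scalar-q-flat-characterization}, by combining this surjectivity with the criterion that $c_1(T^{1,0}) = 0$ in $H^2(M;\bR)$ is equivalent to $[\ell^\theta] \in \im\bigl(H^{0,1}(M) \to H^1(M;\sP)\bigr)$ and with \cref{scalar-q-flat-characterization}.

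I expect the main obstacle to be the analytic input of the second paragraph: establishing that $\Lscal$ is Fredholm and identifying its cokernel with $\ker\Lscal$. This is where strict pseudoconvexity is indispensable, since it is what supplies the subelliptic estimates for the high-order operator $\Lscal$, and it requires the pseudodifferential calculus on the Heisenberg group in place of classical elliptic theory. By contrast, the reduction is purely formal and the solvability condition is a one-line consequence of nonnegativity, so the hypothesis \eqref{eqn:q01-pairing} is used in a remarkably soft way.
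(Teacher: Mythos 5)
Your reduction of surjectivity to the solvability of $\Lscal u = 2\db Q_{0,1}^{\dbbar}\omega_0$, and your use of nonnegativity via Cauchy--Schwarz to verify the resulting compatibility condition, are both sound and close in spirit to the computations in the paper's proofs of \cref{mH-to-H-surjective} and \cref{Q-nonnegative-consequence}; your final deduction of a $Q$-flat contact form from surjectivity matches the paper's proof exactly. However, the analytic input in your second paragraph is wrong as stated, and it is precisely the point the paper flags as the key difficulty. The operator $\Lscal$ is \emph{not} maximally subelliptic and is \emph{not} Fredholm with finite-dimensional kernel: its kernel always contains the space $\sP$ of CR pluriharmonic functions (see the discussion after \cref{recover-scalar}), and on a closed, strictly pseudoconvex five-manifold $\sP$ is infinite-dimensional by embeddability. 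Equivalently, as the introduction notes, $(\dbbar,\db Q_{0,1}^{\dbbar})$ is not graded injectively hypoelliptic. No parametrix argument in the Heisenberg calculus can therefore deliver the statement that $\im\Lscal$ is the annihilator of $\ker\Lscal$.

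The statement you need is nevertheless true, but it is a nontrivial theorem of Takeuchi~\cite{Takeuchi2020b}*{Theorem~1.1}: the $L^2$-realization of the critical scalar GJMS operator is self-adjoint with closed range, together with the regularity needed to produce smooth solutions. This is exactly the ingredient the paper substitutes for hypoellipticity; it enters the proof of \cref{mH-to-H-surjective} in the form that every $\db Q_{0,1}^{\dbbar}\tau$ decomposes as $\hodge\varphi + \db Q_{0,1}^{\dbbar}\dbbar f$ with $\varphi\in\ker\Lscal$. If you replace your Fredholm claim with this result, your argument goes through and is in fact a more direct packaging of the paper's route, which instead passes through the Hodge-theoretic machinery of \cref{sec:hodge} (the subspace $\HB^{0,1}$, the dual spaces $\HQdual_{0,1}$ and $\HBdual_{0,1}$, the perfect pairing $\kI$, and Serre duality) before applying nonnegativity in \cref{Q-nonnegative-consequence}. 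One further caveat: the ``one-line'' Cauchy--Schwarz step relies on $\kQ_{0,1}$ being Hermitian on $\ker\dbbar\cap\sR^{0,1}$, which is \cref{Q01-symmetric} and itself requires a short Stokes'-theorem argument involving the Lee form; it is not purely formal.
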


The left-hand side of~\eqref{eqn:q01-pairing} defines a CR invariant Hermitian form on the space of $\dbbar$-closed $(0,1)$-forms.
\Cref{recover-scalar} implies that if~\eqref{eqn:q01-pairing} holds, then the scalar GJMS operator on functions is nonnegative.
In particular, there are examples~\cite{Takeuchi2017} of closed, strictly pseudoconvex five-manifolds which do not satisfy~\eqref{eqn:q01-pairing}.

The proof of \cref{q01-nonnegative} requires two main steps.

First, as a CR invariant analogue of the Kohn's Hodge isomorphism theorem~\cite{Kohn1965}, we identify a space $\HB^{0,1}$ with the property that $\dim \HQ^{0,1}/\HB^{0,1} = \dim H^{0,1}(M)$.
A key difficulty is that the space $\HQ^{0,1}$ of CR $\dbbar$-harmonic $(0,1)$-forms is infinite-dimensional.
In particular, $(\dbbar, \db Q_{0,1}^{\dbbar})$ is not graded injectively hypoelliptic;
i.e.\ $(\dbbar^\ast\dbbar)^3 + \dbbar\dbbar^\ast \hodge Q_{0,1}^{\dbbar}$ is not hypoelliptic (cf.\ \cite{BransonGover2005}).
We overcome this difficulty by using the fact that the $L^2$-realization of the scalar GJMS operator is self-adjoint and has closed range~\cite{Takeuchi2020b}.

Second, we show that if~\eqref{eqn:q01-pairing} holds, then $\HB^{0,1} = \ker \bigl( \HQ^{0,1} \to H^{0,1}(M) \bigr)$.
This is a straightforward adaptation of the argument~\cite{AubryGuillarmou2011} in the conformal setting.

This article is organized as follows:

In \cref{sec:bg} we recall relevant background material on CR and pseudohermitian manifolds.

In \cref{sec:invariance} we carry out most of the computations needed to verify the transformation laws of our $Q$-curvature operators.

In \cref{sec:bc} we prove \cref{mQ01-definition-and-properties} and study some properties of the morphism $\mHQ^{1,1} \to \HBC^{1,1}(M)$.

In \cref{sec:kr} we prove \cref{Q01-definition-and-properties} and study some properties of the morphism $\HQ^{0,1} \to H^{0,1}(M)$.

In \cref{sec:scalar} we relate the scalar GJMS operator and the scalar $Q$-curvature to our $Q$-curvature operators and prove \cref{recover-scalar,scalar-q-flat-characterization,q00-integral}.

In \cref{sec:hodge} we further develop the ``Hodge theory'' of $\HQ^{0,1}$.
In particular, we characterize closed, strictly pseudoconvex manifolds for which the morphism $\HQ^{0,1} \to H^{0,1}(M)$ is surjective.

In \cref{sec:strong-lee} we study the condition~\eqref{eqn:q01-pairing} and prove \cref{q01-nonnegative}.

\subsection*{Acknowledgements}
I thank Yuya Takeuchi for getting me interested in the problem of finding conditions which imply the existence of a $Q$-flat contact form;
and an anonymous referee for helpful comments which improved the readability of this article.
This work was partially supported by the Simons Foundation (Grant \#524601).

\section{Background}
\label{sec:bg}

In this section we collect some relevant background on CR and pseudohermitian manifolds, including a review of the recent construction~\cite{Case2021rumin} of the bigraded Rumin algebra.
Given the focus of this article, we only discuss this background in the context of five-dimensional manifolds.

\subsection{CR geometry}
\label{subsec:bg/ph}

A \defn{CR five-manifold} is a pair $(M^5,T^{1,0})$ of a connected, orientable, real five-manifold $M^5$ and a nondegenerate complex rank $2$ distribution $T^{1,0} \subset TM \otimes \bC$ such that  $T^{1,0} \cap \overline{T^{0,1}} = \{0\}$ and $[T^{1,0}, T^{1,0}] \subset T^{1,0}$.
Nondegeneracy means that if $\theta$ is any \defn{contact form} --- i.e.\ $\ker \theta = H := \Real (T^{1,0} \oplus \overline{T^{1,0}})$ --- then the \defn{Levi form} $(Z,W) \mapsto -i\,d\theta(Z,\oW)$ on $T^{1,0}$ is nondegenerate.

The assumptions of orientability and connectedness are made for technical convenience:
Our constructions are local and some cohomological statements do not require orientability.
We leave the details to the interested reader.

A \defn{pseudohermitian five-manifold} is a pair $(M^5,T^{1,0},\theta)$ of a CR five-manifold and a contact form $\theta$.
We say that $(M^5,T^{1,0},\theta)$ is \defn{strictly pseudoconvex} if its Levi form is positive definite.
Note that if $\theta$ and $\htheta$ are both strictly pseudoconvex contact forms on $(M^5,T^{1,0})$, then there is an $\Upsilon \in C^\infty(M)$ such that $\htheta = e^\Upsilon\theta$.

The \defn{Reeb vector field} $T$ of a pseudohermitian five-manifold $(M^5,T^{1,0},\theta)$ is the unique real vector field such that $\theta(T)=1$ and $d\theta(T,\cdot)=0$.
An \defn{admissible coframe} is a set $\{ \theta^\alpha \}_{\alpha=1}^2$ of local complex-valued differential one-forms which annihilate local sections of $\overline{T^{1,0}} \oplus \bC T$ and for which $\{ \theta^\alpha \rv_{T^{1,0}} \}$ forms a local coframe for $(T^{1,0})^\ast$.
It follows that the functions $\{ h_{\alpha\bar\beta} \}_{\alpha,\beta=1}^2$ defined by
\begin{equation*}
 d\theta = ih_{\alpha\bar\beta}\,\theta^\alpha \wedge \theta^{\bar\beta} ,
\end{equation*}
$\theta^{\bar\beta} := \overline{\theta^\beta}$, define an Hermitian matrix.
We use $h_{\alpha\bar\beta}$ to raise and lower indices in the usual way.
The \defn{connection one-forms} $\omega_\alpha{}^\gamma$ and \defn{pseudohermitian torsion} $A_{\alpha\gamma}$ are uniquely determined from an admissible coframe by the system
\begin{align*}
 d\theta^\alpha & = \theta^\mu \wedge \omega_\mu{}^\alpha + A^\alpha{}_{\bar\beta}\,\theta \wedge \theta^{\bar\beta} , \\
 dh_{\alpha\bar\beta} & = \omega_{\alpha\bar\beta} + \omega_{\bar\beta\alpha}, \\
 A_{\alpha\gamma} & = A_{\gamma\alpha} .
\end{align*}
The connection one-forms determine the \defn{Tanaka--Webster connection}~\cites{Tanaka1975,Webster1978} by
\begin{equation*}
 \nabla_X Z_\alpha := \omega_\alpha{}^\mu(X) Z_\mu
\end{equation*}
for all local sections $Z_\alpha$ of $T^{1,0}$ and $X$ of $TM\otimes\bC$.
The \defn{pseudohermitian curvature} $R_{\alpha\bar\beta\gamma\bar\sigma}$ is determined from the \defn{curvature forms} $\Omega_\alpha{}^\gamma := d\omega_\alpha{}^\gamma - \omega_\alpha{}^\mu \wedge \omega_\mu{}^\gamma$ by
\begin{equation*}
 \Omega_\alpha{}^\gamma \equiv R_\alpha{}^\gamma{}_{\rho\bar\sigma}\,\theta^\rho\wedge\theta^{\bar\sigma} \mod \theta, \theta^\rho\wedge\theta^\mu, \theta^{\bar\sigma} \wedge\theta^{\bar\nu}.
\end{equation*}
The \defn{pseudohermitian Ricci tensor} is $R_{\alpha\bar\beta} := R_{\alpha\bar\beta\mu}{}^\mu$ and the \defn{pseudohermitian scalar curvature} is $R := R_\mu{}^\mu$.

We need to know how the Tanaka--Webster connection, the pseudohermitian torsion, and the pseudohermitian curvature transform under change of contact form.
The latter is most clearly expressed in terms of the \defn{CR Schouten tensor}
\begin{equation*}
 P_{\alpha\bar\beta} := \frac{1}{4}\left( R_{\alpha\bar\beta} - \frac{R}{6}h_{\alpha\bar\beta} \right) .
\end{equation*}

\begin{lemma}[\citelist{ \cite{Lee1988}*{Lemma~2.4} \cite{GoverGraham2005}*{Equation~(2.7)} }]
 \label{transformation}
 Let $(M^5,T^{1,0},\theta)$ be a pseudohermitian five-manifold.
 If $\htheta = e^\Upsilon\theta$, then
 \begin{align*}
  \hP_{\alpha\bar\beta} & = P_{\alpha\bar\beta} - \frac{1}{2}\left(\Upsilon_{\alpha\bar\beta} + \Upsilon_{\bar\beta\alpha}\right) - \frac{1}{2}\Upsilon_\mu\Upsilon^\mu h_{\alpha\bar\beta} , \\
  \hA_{\alpha\gamma} & = A_{\alpha\gamma} + i\Upsilon_{\alpha\gamma} - i\Upsilon_\alpha\Upsilon_\gamma ,
 \end{align*}
 where $\Upsilon_\alpha:=\nabla_\alpha\Upsilon$ and $\Upsilon_{\alpha\bar\beta}:=\nabla_{\bar\beta}\nabla_\alpha\Upsilon$, and both the CR Schouten tensor $\hP_{\alpha\bar\beta}$ and the pseudohermitian torsion $\hA_{\alpha\gamma}$ of $\htheta$ are computed with respect to the admissible coframe $\{ \htheta^\alpha := \theta^\alpha + i\Upsilon^\alpha\theta \}$.
 Moreover, if $\omega_\alpha$ is a section of $(T^{1,0})^\ast$, then
 \begin{align*}
  \hnabla_\gamma \omega_\alpha & = \nabla_\gamma\omega_\alpha - \Upsilon_\gamma\omega_\alpha - \Upsilon_\alpha\omega_\gamma , \\
  \hnabla_{\bar\beta} \omega_\alpha & = \nabla_{\bar\beta}\omega_\alpha + \Upsilon^\mu\omega_\mu h_{\alpha\bar\beta} .
 \end{align*}
\end{lemma}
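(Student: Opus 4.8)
The plan is to prove all three transformation laws by direct computation, exploiting the fact that the connection forms $\omega_\alpha{}^\gamma$ and the torsion $A_{\alpha\gamma}$ are \emph{uniquely} determined by the structure equations
\[ d\theta^\alpha = \theta^\mu \wedge \omega_\mu{}^\alpha + A^\alpha{}_{\bar\beta}\,\theta \wedge \theta^{\bar\beta}, \qquad dh_{\alpha\bar\beta} = \omega_{\alpha\bar\beta} + \omega_{\bar\beta\alpha}, \qquad A_{\alpha\gamma} = A_{\gamma\alpha}, \]
together with the analogous system for $\htheta = e^\Upsilon\theta$. First I would fix the coframe $\htheta^\alpha := \theta^\alpha + i\Upsilon^\alpha\theta$ asserted in the statement, verify that it is admissible for $\htheta$ by checking that it annihilates $\overline{T^{1,0}} \oplus \bC\hT$, where $\hT$ is the new Reeb field, and expand $d\htheta = e^\Upsilon(d\Upsilon \wedge \theta + d\theta)$ to read off the new Levi form $\hh_{\alpha\bar\beta} = e^\Upsilon h_{\alpha\bar\beta}$.

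Next I would compute $d\htheta^\alpha$ from the old structure equations. The only nonroutine input is $d(\Upsilon^\alpha)$, which introduces the second covariant derivatives $\Upsilon_{\alpha\beta}$, $\Upsilon_{\alpha\bar\beta}$, and the Reeb-direction derivatives. Writing $\homega_\alpha{}^\gamma = \omega_\alpha{}^\gamma + \gamma_\alpha{}^\gamma$ for an unknown correction one-form and substituting into the structure equations for $\htheta^\alpha$ and $\hh_{\alpha\bar\beta}$, the metric-compatibility condition and the symmetry $\hA_{\alpha\gamma} = \hA_{\gamma\alpha}$ pin down $\gamma_\alpha{}^\gamma$ uniquely. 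Reading off the $\theta^\gamma$- and $\theta^{\bar\beta}$-components of $\gamma_\alpha{}^\gamma$, dualized to act on a section $\omega_\alpha$ of $(T^{1,0})^\ast$, yields the two connection formulas; reading off the coefficient of $\theta \wedge \theta^{\bar\beta}$ yields the torsion law $\hA_{\alpha\gamma} = A_{\alpha\gamma} + i\Upsilon_{\alpha\gamma} - i\Upsilon_\alpha\Upsilon_\gamma$, the quadratic term arising from the $i\Upsilon^\alpha\theta$ correction in the coframe.

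Finally, to obtain the Schouten transformation I would insert $\homega_\alpha{}^\gamma = \omega_\alpha{}^\gamma + \gamma_\alpha{}^\gamma$ into the curvature form $\hOmega_\alpha{}^\gamma = d\homega_\alpha{}^\gamma - \homega_\alpha{}^\mu \wedge \homega_\mu{}^\gamma$, extract the coefficient of $\htheta^\rho \wedge \htheta^{\bar\sigma}$ to identify $\hR_\alpha{}^\gamma{}_{\rho\bar\sigma}$, trace to obtain $\hR_{\alpha\bar\beta}$ and $\hR$, and then assemble $\hP_{\alpha\bar\beta}$. I expect this last step to be the main obstacle: it produces many terms involving third derivatives of $\Upsilon$ and products of curvature with $\Upsilon$, and collapsing them into the clean expression $P_{\alpha\bar\beta} - \tfrac12(\Upsilon_{\alpha\bar\beta} + \Upsilon_{\bar\beta\alpha}) - \tfrac12 \Upsilon_\mu\Upsilon^\mu h_{\alpha\bar\beta}$ requires repeated use of the CR commutation (Ricci) identities to symmetrize the mixed second derivatives and to cancel the Reeb-direction derivatives. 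The fact that precisely the symmetric combination $\Upsilon_{\alpha\bar\beta} + \Upsilon_{\bar\beta\alpha}$ survives serves as a useful consistency check. By contrast, the connection and torsion computations of the previous paragraph are comparatively mechanical bookkeeping.
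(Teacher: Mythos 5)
The paper does not prove this lemma at all: it is imported verbatim from the cited references (Lee's Lemma~2.4 and Gover--Graham's Equation~(2.7)), so there is no in-paper argument to compare against. Your outline is the standard derivation used in those sources --- fix the coframe $\htheta^\alpha = \theta^\alpha + i\Upsilon^\alpha\theta$, exploit uniqueness of the solution to the structure equations to solve for the correction $\gamma_\alpha{}^\gamma = \homega_\alpha{}^\gamma - \omega_\alpha{}^\gamma$ and the new torsion, then push the correction through $\hOmega_\alpha{}^\gamma = d\homega_\alpha{}^\gamma - \homega_\alpha{}^\mu\wedge\homega_\mu{}^\gamma$ and trace --- and that strategy is correct, including your observations that $\hh_{\alpha\bar\beta} = e^\Upsilon h_{\alpha\bar\beta}$ for this (unrescaled) choice of coframe and that admissibility must be checked against the \emph{new} Reeb field.

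That said, as written this is a plan rather than a proof: every substantive computation is described in the conditional ("I would compute\dots", "I expect\dots") and none is carried out. The connection and torsion laws really are mechanical once $\gamma_\alpha{}^\gamma$ is solved for, but the step you yourself flag as the main obstacle --- extracting $\hR_{\alpha\bar\beta}$ and $\hR$ from the transformed curvature form and verifying that the combination $P_{\alpha\bar\beta} = \tfrac14\bigl(R_{\alpha\bar\beta} - \tfrac{R}{6}h_{\alpha\bar\beta}\bigr)$ absorbs all third-derivative and Reeb-direction terms via the commutation identities --- is precisely where the content lies, and it is not done. Two concrete points to watch if you execute it: (i) since $\hh_{\alpha\bar\beta} = e^\Upsilon h_{\alpha\bar\beta}$, indices in the hatted quantities are raised with $\hh$, so factors of $e^\Upsilon$ must be tracked to land on formulas in which the right-hand sides involve the \emph{unhatted} $h_{\alpha\bar\beta}$ and $\Upsilon^\mu$; (ii) the symmetrization $\Upsilon_{\alpha\bar\beta} + \Upsilon_{\bar\beta\alpha}$ is forced because $\nabla_{\bar\beta}\nabla_\alpha\Upsilon$ and $\nabla_\alpha\nabla_{\bar\beta}\Upsilon$ differ by $ih_{\alpha\bar\beta}\nabla_0\Upsilon$, and it is this Reeb term that must cancel against the contribution of $d(\Upsilon^\alpha)$ in the $\theta$-direction, so your "consistency check" is in fact a nontrivial cancellation that needs to be exhibited.
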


For notational convenience, we denote by
\begin{align*}
 E_{\alpha\bar\beta} & := P_{\alpha\bar\beta} - \frac{P}{2}h_{\alpha\bar\beta} , \\
 P & := P_\mu{}^\mu
\end{align*}
the trace-free part of and the trace of the CR Schouten tensor, respectively.

\subsection{The bigraded Rumin complex}
\label{subsec:bg/rumin}

We regard the \defn{bigraded Rumin complex}, depicted in~\eqref{eqn:bigraded-rumin-complex}, as a bigraded balanced $A_\infty$-algebra formed from subspaces of the space of complex-valued differential forms~\cite{Case2021rumin}.

We first define the spaces $\sR^{p,q}$ of \defn{$(p,q)$-forms} in the bigraded Rumin complex.
If $p+q\leq 2$, then
\begin{multline*}
 \sR^{p,q} := \biggl\{ \frac{1}{p!q!}\Bigl(\omega_{\Alpha\bar\Beta}\,\theta^\Alpha\wedge\theta^{\bar\Beta} - \frac{pi}{3-p-q}\nabla^{\mu}\omega_{\mu\Alpha^\prime\bar\Beta}\,\theta \wedge \theta^{\Alpha^\prime} \wedge \theta^{\bar\Beta} \\
  + \frac{(-1)^pqi}{3-p-q}\nabla^{\bar\nu}\omega_{\Alpha\bar\nu\bar\Beta^\prime}\,\theta \wedge \theta^{\Alpha} \wedge \theta^{\bar\Beta^\prime} \Bigr) \mathrel{}:\mathrel{} \omega_{\mu\Alpha^\prime}{}^\mu{}_{\bar\Beta^\prime} = 0, \lv A\rv = p, \rv B\rv = q \biggr\} ,
\end{multline*}
where $\Alpha = (\alpha_1,\dotsc,\alpha_p)$ and $\Beta = (\beta_1,\dotsc,\beta_q)$ are multi-indices of length $p$ and $q$, respectively, and we identify $\Alpha = (\alpha, \Alpha^\prime)$ and $\Beta = (\beta, \Beta^\prime)$ for multi-indices $\Alpha^\prime$ and $\Beta^\prime$ of length $p-1$ and $q-1$, respectively.
\Cref{transformation} implies that $\sR^{p,q}$, $p+q\leq 2$, is CR invariant.
Note that an element $\omega\in\sR^{p,q}$, $p+q\leq 2$, is determined by $\omega\rv_{H\otimes\bC}$.
For this reason, we abuse notation and identify $\omega \equiv \frac{1}{p!q!}\omega_{\Alpha\bar\Beta}\,\theta^{\Alpha} \wedge \theta^{\bar\Beta}$ in $\sR^{p,q}$, $p+q\leq 2$.
If $p+q\geq 3$, then
\begin{multline*}
 \sR^{p,q} := \biggl\{ \frac{1}{(p+q-3)!(3-p)!(2-q)!}\omega_{\Alpha\bar\Beta}\,\theta \wedge \theta^\Alpha \wedge \theta^{\bar\Beta} \wedge d\theta^{p+q-3} \\
  :\mathrel{} \omega_{\mu\Alpha^\prime}{}^\mu{}_{\bar\Beta^\prime}=0, \lv A\rv = 2-q, \lv B \rv = 3-p \biggr\} .
\end{multline*}
It is clear that $\sR^{p,q}$, $p+q\geq3$, is CR invariant.

We now define the operators $\db$, $\dhor$, and $\dbbar$ in the bigraded Rumin complex.
If $p+q\leq 1$, then
\begin{equation}
 \label{eqn:db-low}
 \begin{aligned}
  \db\omega & :\equiv \frac{1}{p!q!}\left( \nabla_\alpha\omega_{\Alpha\bar\Beta} - \frac{q}{3-p-q}h_{\alpha\bar\beta}\nabla^{\bar\nu}\omega_{\Alpha\bar\nu\bar\Beta^\prime} \right)\,\theta^{\alpha\Alpha}\wedge\theta^{\bar\Beta} && \text{in $\sR^{p+1,q}$} , \\
  \dbbar\omega & :\equiv \frac{(-1)^p}{p!q!} \left( \nabla_{\bar\beta}\omega_{\Alpha\bar\Beta} - \frac{p}{3-p-q}h_{\alpha\bar\beta}\nabla^\mu\omega_{\mu\Alpha^\prime\bar\Beta} \right) \, \theta^\Alpha \wedge \theta^{\bar\beta\bar\Beta} && \text{in $\sR^{p,q+1}$}
 \end{aligned}
\end{equation}
for all $\omega \equiv \frac{1}{p!q!}\omega_{\Alpha\bar\Beta}\,\theta^\Alpha \wedge \theta^{\bar\Beta}$ in $\sR^{p,q}$.
If $p+q=2$, then
\begin{align*}
 \db\omega & := \frac{(-1)^{p-1}i}{p!(q-1)!}\left(\nabla_\alpha\nabla^{\bar\nu}\omega_{\Alpha\bar\nu\bar\Beta^\prime} + iA_\alpha{}^{\bar\nu}\omega_{\Alpha\bar\nu\bar\Beta^\prime}\right)\,\theta \wedge \theta^{\alpha\Alpha} \wedge \theta^{\bar\Beta^\prime} , \\
 \dhor\omega & := \frac{1}{p!q!}\left( \nabla_0\omega_{\Alpha\bar\Beta} + pi\nabla_\alpha\nabla^\mu\omega_{\mu\Alpha^\prime\bar\Beta} - qi\nabla_{\bar\beta}\nabla^{\bar\nu}\omega_{\Alpha\bar\nu\bar\Beta^\prime} \right) \, \theta \wedge \theta^\Alpha \wedge \theta^{\Bar\Beta} , \\
 \dbbar\omega & := \frac{(-1)^{p-1}i}{(p-1)!q!} \left( \nabla_{\bar\beta}\nabla^\mu\omega_{\mu\Alpha^\prime\bar\Beta} - iA_{\bar\beta}{}^\mu\omega_{\mu\Alpha^\prime\bar\Beta} \right) \, \theta \wedge \theta^{\Alpha^\prime} \wedge \theta^{\bar\beta\bar\Beta}
\end{align*}
for all $\omega \equiv \frac{1}{p!q!}\omega_{\Alpha\bar\Beta}\,\theta^\Alpha\wedge\theta^{\bar\Beta}$ in $\sR^{p,q}$.
If $p+q\geq3$, then
\begin{equation}
 \label{eqn:db-high}
 \begin{split}
  \db\omega & := \frac{(-1)^qi}{(p+q-2)!(2-p)!(2-q)!}\nabla^{\bar\nu}\omega_{\Alpha\bar\nu\bar\Beta^\prime}\,\theta \wedge \theta^{\Alpha} \wedge \theta^{\bar\Beta^\prime} \wedge d\theta^{p+q-2} , \\
  \dbbar\omega & := \frac{-i}{(p+q-2)!(3-p)!(1-q)!}\nabla^\mu\omega_{\mu\Alpha^\prime\bar\Beta}\,\theta \wedge \theta^{\Alpha^\prime} \wedge \theta^{\bar\Beta} \wedge d\theta^{p+q-2}
 \end{split}
\end{equation}
for all $\omega = \frac{1}{(p+q-3)!(3-p)!(2-q)!}\omega_{\Alpha\bar\Beta}\,\theta \wedge \theta^\Alpha\wedge\theta^{\bar\Beta}\wedge d\theta^{p+q-3}$ in $\sR^{p,q}$.
The operators $\db$, $\dbbar$, and $\dhor$ are all CR invariant.
Moreover, if $k\not=2$, then the exterior derivative on $\sR^k := \bigoplus_{p+q=k}\sR^{p,q}$ is $d=\db+\dbbar$;
if $k=2$, then the exterior derivative on $\sR^k$ is $d=\db+\dhor+\dbbar$.
As noted by Rumin~\cite{Rumin1990}, the complexified de Rham cohomology groups are recovered from $d \colon \sR^\bullet \to \sR^\bullet$, $\sR^\bullet := \bigoplus_k\sR^k$, in the usual way:
\begin{equation*}
 H^k(M;\bC) := \frac{\ker d \cap \sR^k}{\im d \cap \sR^k} .
\end{equation*}
The de Rham cohomology groups are recovered by taking real parts.

That~\eqref{eqn:bigraded-rumin-complex} is a bigraded complex means that the sum of all compositions of two operators vanishes.
In particular, $\dbbar\circ\dbbar = 0$ and $\db \circ \db = 0$.
Thus the \defn{Kohn--Rossi cohomology groups}~\cite{KohnRossi1965}
\begin{equation*}
 H^{p,q}(M) := \frac{\ker \dbbar \cap \sR^{p,q}}{\im \dbbar \cap \sR^{p,q}}
\end{equation*}
and the \defn{conjugate Kohn--Rossi cohomology groups}
\begin{equation*}
 H_{\db}^{p,q}(M) := \frac{\ker \db \cap \sR^{p,q}}{\im \db \cap \sR^{p,q}}
\end{equation*}
are well-defined.
We also define
\begin{equation*}
 d_b^c :=
 \begin{cases}
  i\dbbar - i\db , & \text{on $\sR^{p,q}$, $p+q\not=2$} , \\
  -\dbbar + \dhor - \db , & \text{on $\sR^{p,q}$, $p+q=2$} .
 \end{cases}
\end{equation*}
Then $d_b^c=JdJ^{-1}$, where $J := i^{q-p}$ on $\sR^{p,q}$, $p+q \leq 2$, and $J:= i^{q-p+1}$ on $\sR^{p,q}$, $p+q \geq 3$.

We also require a few other cohomology groups.
To that end, set $\sS^0:=\sR^0$ and $\sS^1:=\Real\sR^{1,1}$.
Since~\eqref{eqn:bigraded-rumin-complex} is a bigraded complex, the operator $i\db\dbbar$ maps $\sS^0$ to $\sS^1$ and satisfies $d \circ i\db\dbbar = 0$.
Therefore
\begin{equation*}
 H^1(M;\sP) := \frac{\ker d \cap \sS^{1}}{\im i\db\dbbar \cap \sS^{1}}
\end{equation*}
is well-defined.
Note that $\ker i\db\dbbar \cap \sS^0$ is the space $\sP$ of \defn{CR pluriharmonic functions}~\cite{Lee1988} and that if $(M^5,T^{1,0})$ is strictly pseudoconvex and locally embeddable, then $H^1(M;\sP)$ coincides with the corresponding cohomology group with coefficients in the sheaf of CR pluriharmonic functions~\cite{Case2021rumin}*{Theorem~9.3}.

Of particular note is the \defn{Lee form}
\begin{equation}
 \label{eqn:lee-form}
 \ell^\theta :\equiv -iE_{\alpha\bar\beta}\,\theta^\alpha\wedge\theta^{\bar\beta} \quad \text{in $\sS^1$} .
\end{equation}
Note that $\ell^\theta$ depends on the choice of contact form.
It holds~\cites{Lee1988,Case2021rumin} that the Lee form is $d$-closed and that the corresponding cohomology class $[\ell^\theta] \in H^2(M;\bR)$ is CR invariant;
indeed~\cite{Case2021rumin}*{Lemma~19.7},
\begin{equation}
 \label{eqn:lee-class-in-H2}
 [\ell^\theta] = -\frac{\pi}{2} c_1(T^{1,0}) .
\end{equation}
In fact, the cohomology class $[\ell^\theta] \in H^1(M;\sP)$ is CR invariant;
this is a consequence of the following lemma.

\begin{lemma}[\cite{Case2021rumin}*{Lemma~19.3}]
 \label{lee-form-transformation}
 Let $(M^5,T^{1,0})$ be a CR five-manifold.
 Then
 \begin{equation*}
  \ell^{e^\Upsilon\theta} = \ell^\theta + i\db\dbbar\Upsilon
 \end{equation*}
 for all contact forms $\theta$ and $e^\Upsilon\theta$ on $(M^5,T^{1,0})$.
\end{lemma}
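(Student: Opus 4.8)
The plan is to check the identity at the level of $H\otimes\bC$-components. Recall that an element of $\sR^{1,1}$ is determined by its restriction to $H\otimes\bC$, and that there $\ell^\theta\equiv -iE_{\alpha\bar\beta}\,\theta^\alpha\wedge\theta^{\bar\beta}$ by~\eqref{eqn:lee-form}. Writing $\htheta=e^\Upsilon\theta$ and using the admissible coframe $\htheta^\alpha=\theta^\alpha+i\Upsilon^\alpha\theta$ of \cref{transformation}, one has $\htheta^\alpha\wedge\htheta^{\bar\beta}\equiv\theta^\alpha\wedge\theta^{\bar\beta}$ modulo $\theta$, whence $\ell^{\htheta}\equiv -i\hE_{\alpha\bar\beta}\,\theta^\alpha\wedge\theta^{\bar\beta}$ in $\sR^{1,1}$. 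The lemma is therefore equivalent to the pointwise identity
\begin{equation*}
 -i\bigl(\hE_{\alpha\bar\beta}-E_{\alpha\bar\beta}\bigr)=(i\db\dbbar\Upsilon)_{\alpha\bar\beta}
\end{equation*}
of $H$-components, and I would prove it by computing each side separately.

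For the left-hand side I would feed \cref{transformation} into the definition $\hE_{\alpha\bar\beta}=\hP_{\alpha\bar\beta}-\tfrac12\hP\,\widehat{h}_{\alpha\bar\beta}$. The same coframe change gives $\widehat{h}_{\alpha\bar\beta}=e^\Upsilon h_{\alpha\bar\beta}$; tracing the transformation law for $\hP_{\alpha\bar\beta}$ then yields $\hP=e^{-\Upsilon}\bigl(P-S-\Upsilon_\mu\Upsilon^\mu\bigr)$ with $S:=\tfrac12 h^{\mu\bar\nu}(\Upsilon_{\mu\bar\nu}+\Upsilon_{\bar\nu\mu})$. Substituting, the factors $e^{\pm\Upsilon}$ cancel and, crucially, so do the quadratic terms $\Upsilon_\mu\Upsilon^\mu h_{\alpha\bar\beta}$, leaving
\begin{equation*}
 \hE_{\alpha\bar\beta}-E_{\alpha\bar\beta}=-\tfrac12\bigl(\Upsilon_{\alpha\bar\beta}+\Upsilon_{\bar\beta\alpha}\bigr)+\tfrac12 S\,h_{\alpha\bar\beta}.
\end{equation*}
This cancellation of the nonlinear terms is the main consistency check, forced by the linearity of the right-hand side.

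For the right-hand side I would apply~\eqref{eqn:db-low} twice: first $\dbbar\Upsilon\equiv\Upsilon_{\bar\beta}\,\theta^{\bar\beta}$ in $\sR^{0,1}$, and then, with $p=0$, $q=1$, $3-p-q=2$,
\begin{equation*}
 (\db\dbbar\Upsilon)_{\alpha\bar\beta}=\Upsilon_{\bar\beta\alpha}-\tfrac12 h_{\alpha\bar\beta}\nabla^{\bar\nu}\Upsilon_{\bar\nu},
\end{equation*}
which one checks is trace-free as required for membership in $\sR^{1,1}$. Comparing the two displays, the identity reduces to the assertion that $\tfrac12(\Upsilon_{\alpha\bar\beta}-\Upsilon_{\bar\beta\alpha})$ equals $\tfrac12 h_{\alpha\bar\beta}\bigl(S-\nabla^{\bar\nu}\Upsilon_{\bar\nu}\bigr)$, and both sides are the same multiple of $h_{\alpha\bar\beta}\nabla_0\Upsilon$ by the Tanaka--Webster commutation identity $\Upsilon_{\alpha\bar\beta}-\Upsilon_{\bar\beta\alpha}=ih_{\alpha\bar\beta}\nabla_0\Upsilon$ for functions (used once directly and once under a trace to evaluate $S-\nabla^{\bar\nu}\Upsilon_{\bar\nu}$).

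The expected obstacles are purely in the bookkeeping: keeping track that $\widehat{h}_{\alpha\bar\beta}=e^\Upsilon h_{\alpha\bar\beta}$ for this unnormalized coframe (so that $\hE_{\alpha\bar\beta}$ is trace-free with respect to both metrics and the identification in $\sR^{1,1}$ is legitimate), confirming the cancellation of the quadratic terms, and invoking the commutation relation with the correct sign. Because $i\db\dbbar\Upsilon\in\Real\sR^{1,1}$ and the $\theta\wedge\theta^\alpha$ and $\theta\wedge\theta^{\bar\beta}$ components of any element of $\sR^{1,1}$ are determined by its $H$-components, no separate verification of those components is needed; this is what makes the reduction to $H$-components in the first step the efficient route.
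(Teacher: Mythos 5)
Your computation is correct, and it is worth noting that the paper itself offers no proof of this lemma --- it is quoted verbatim from \cite{Case2021rumin}*{Lemma~19.3} --- so your argument is a self-contained verification rather than a variant of an in-paper proof. I checked the three key steps and they all hold with the conventions of \cref{transformation} and~\eqref{eqn:db-low}: (i) with the unnormalized coframe $\htheta^\alpha=\theta^\alpha+i\Upsilon^\alpha\theta$ one indeed has $\hh_{\alpha\bar\beta}=e^\Upsilon h_{\alpha\bar\beta}$, the factors $e^{\pm\Upsilon}$ cancel in $\tfrac{\hP}{2}\hh_{\alpha\bar\beta}$, and the quadratic terms $\tfrac12\Upsilon_\mu\Upsilon^\mu h_{\alpha\bar\beta}$ drop out, giving $\hE_{\alpha\bar\beta}-E_{\alpha\bar\beta}=-\tfrac12(\Upsilon_{\alpha\bar\beta}+\Upsilon_{\bar\beta\alpha})+\tfrac12 S\,h_{\alpha\bar\beta}$; (ii) the formula $(\db\dbbar\Upsilon)_{\alpha\bar\beta}=\Upsilon_{\bar\beta\alpha}-\tfrac12h_{\alpha\bar\beta}\Upsilon_{\bar\nu}{}^{\bar\nu}$ follows from~\eqref{eqn:db-low} with $p=0$, $q=1$ and is trace-free; (iii) the residual identity reduces, after tracing the commutation relation with $h^{\mu\bar\nu}h_{\mu\bar\nu}=2$, to both sides equalling the same multiple of $h_{\alpha\bar\beta}\nabla_0\Upsilon$ --- and, as you observe, this final step is insensitive to the sign convention in the commutator since it enters both sides identically. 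The one point I would make explicit in a written version is the justification that checking $H$-components suffices, namely that $\ell^{e^\Upsilon\theta}$, $\ell^\theta$, and $i\db\dbbar\Upsilon$ all \emph{a priori} lie in $\sR^{1,1}$ (for the Lee form this is part of its definition as an element of $\sS^1$, and for $i\db\dbbar\Upsilon$ it follows from the bigraded complex structure), since an element of $\sR^{1,1}$ is determined by its restriction to $H\otimes\bC$; you do state this, so the argument is complete.
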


We say that $(M^5,T^{1,0},\theta)$ is \defn{pseudo-Einstein} if $\ell^\theta=0$.
One readily checks that a CR five-manifold $(M^5,T^{1,0})$ admits a pseudo-Einstein contact form if and only if $[\ell^\theta]=0$ in $H^1(M;\sP)$~\citelist{ \cite{Lee1988}*{Proposition~5.2} \cite{Case2021rumin}*{Lemma~19.6} }.

The complexifications of $H^1(M;\sP)$ and $H^0(M;\sP):=\sP$ are
\begin{align*}
 \HBC^{1,1}(M) & := \frac{\ker d \cap \sR^{1,1}}{\im \db\dbbar \cap \sR^{1,1}} , \\
 \HA^{0,0}(M) & := \ker \db\dbbar \cap \sR^{0,0} ,
\end{align*}
respectively.
This notation anticipates the relevance of the $Q$-curvature operator on $d$-closed forms to the CR analogues of the Bott--Chern~\cite{BottChern1965} and Aeppli~\cite{Aeppli1965} cohomology groups, respectively;
see \cref{sec:bc} for further details.

There is a canonical CR invariant projection $\pi$ from the space $\COmega^kM$ of complex-valued differential $k$-forms to $\sR^k$.
Let $\omega \in \COmega^kM$ and let $\theta$ be a contact form.
The fact that $d\theta\rv_H$ is a symplectic form on $H$ implies that~\cite{Case2021rumin}*{Theorem~3.7}
\begin{enumerate}
 \item if $k \leq 2$, then there is a unique $\theta \wedge \xi \in \theta \wedge \COmega^{k-2}M$ such that
 \begin{equation*}
  \theta \wedge \omega \wedge d\theta^{3-k} = \theta \wedge \xi \wedge d\theta^{4-k} ;
 \end{equation*}
 \item if $k \geq 3$, then there is a unique $\theta \wedge \xi \in \theta \wedge \COmega^{4-k}M$ such that
 \begin{equation*}
  \theta \wedge \omega = \theta \wedge \xi \wedge d\theta^{k-2} .
 \end{equation*}
\end{enumerate}
Moreover, the map $\Gamma \colon \COmega^kM \to \theta \wedge \COmega^{k-2}M$,
\begin{equation*}
 \Gamma\omega :=
 \begin{cases}
  \theta \wedge \xi , & \text{if $k \leq 2$}, \\
  \theta \wedge \xi \wedge d\theta^{k-3} , & \text{if $k \geq 3$} ,
 \end{cases}
\end{equation*}
is CR invariant.
The map
\begin{equation*}
 \pi\omega := \omega - d\Gamma\omega - \Gamma d\omega
\end{equation*}
is the aforementioned CR invariant projection $\pi \colon \COmega^kM \to \sR^k$.
This projection yields products which give $\sR^\bullet$ and $\sR^{\bullet,\bullet} := \bigoplus_{p,q} \sR^{p,q}$ the structure of balanced $A_\infty$-algebras.
The relevant properties of these structures are as follows:

Define $\rwedge \colon \sR^\bullet \times \sR^\bullet \to \sR^\bullet$ by
\begin{equation*}
 \omega \rwedge \tau := \pi\left( \omega \wedge \tau \right) .
\end{equation*}
Let $\omega \in \sR^{p,q}$ and $\tau \in \sR^{r,s}$.
If $p+q+r+s\leq 2$ or $\max\{p+q,r+s\}\geq3$, then $\omega \rwedge \tau \in \sR^{p+r,q+s}$;
otherwise $\omega \rwedge \tau \in \sR^{p+r+1,q+s-1} \oplus \sR^{p+r,q+s}$~\cite{Case2021rumin}*{Lemma~8.12}.
For example, it is straightforward to compute that
\begin{multline}
 \label{eqn:rwedge-11-11}
 \omega \rwedge \tau = \left(\frac{1}{2}\nabla_\alpha(\omega_{\mu\bar\nu}\tau^{\bar\nu\mu}) + \omega_\alpha{}^\mu\nabla^{\bar\nu}\tau_{\mu\bar\nu} + \tau_\alpha{}^\mu\nabla^{\bar\nu}\omega_{\mu\bar\nu} \right) \, \theta \wedge \theta^\alpha \wedge d\theta \\
  + \left(\frac{1}{2}\nabla_ {\bar\beta}(\omega_{\mu\bar\nu}\tau^{\bar\nu\mu}) + \omega^{\bar\nu}{}_{\bar\beta}\nabla^{\mu}\tau_{\mu\bar\nu} + \tau ^{\bar\nu}{}_{\bar\beta}\nabla^{\mu}\omega_{\mu\bar\nu} \right) \, \theta \wedge \theta^{\bar\beta} \wedge d\theta
\end{multline}
for $\omega \equiv \omega_{\alpha\bar\beta}\,\theta^{\alpha}\wedge\theta^{\bar\beta}$ and $\tau \equiv \tau_{\alpha\bar\beta}\,\theta^{\alpha} \wedge \theta^{\bar\beta}$ in $\sR^{1,1}$;
and that
\begin{multline}
 \label{eqn:rwedge-11-01}
 \omega \rwedge \rho =  i \left( 2\rho_{\bar\beta}\nabla^\mu\omega_{\mu\bar\sigma} + \omega^{\bar\nu}{}_{\bar\sigma}\nabla_{\bar\beta}\rho_{\bar\nu} \right) \, \theta \wedge \theta^{\bar\beta} \wedge \theta^{\bar\sigma} \\
  + i\left( \frac{1}{2}\omega_{\alpha\bar\beta}\nabla^{\bar\nu}\rho_{\bar\nu} + \nabla_\alpha(\omega^{\bar\nu}{}_{\bar\beta}\rho_{\bar\nu}) - \rho_{\bar\beta}\nabla^{\bar\nu}\omega_{\alpha\bar\nu} - \frac{1}{2}h_{\alpha\bar\beta}\omega^{\bar\nu\mu}\nabla_\mu\rho_{\bar\nu} \right)\,\theta \wedge \theta^\alpha \wedge \theta^{\bar\beta}
\end{multline}
for all $\omega \equiv \omega_{\alpha\bar\beta}\,\theta^{\alpha} \wedge \theta^{\bar\beta}$ in $\sR^{1,1}$ and all $\rho \equiv \rho_{\bar\beta}\,\theta^{\bar\beta}$ in $\sR^{0,1}$.

We define $\krwedge \colon \sR^{p,q} \times \sR^{r,s} \to \sR^{p+r,q+s}$ by
\begin{equation*}
 \omega \krwedge \tau := \pi^{p+r,q+s}(\omega \rwedge \tau) ,
\end{equation*}
where $\pi^{p,q} \colon \sR^{\bullet,\bullet} \to \sR^{p,q}$ is the canonical projection.
The key properties of these products are summarized in the following lemma:

\begin{lemma}[\citelist{ \cite{Case2021rumin}*{Theorems~8.8 and~8.16} }]
 \label{leibnitz}
 Let $(M^5,T^{1,0})$ be a CR five-manifold.
 If $\omega\in\sR^{p,q}$ and $\tau\in\sR^{r,s}$, then
 \begin{align*}
  d(\omega \rwedge \tau) & = d\omega \rwedge \tau + (-1)^{p+q}\omega \rwedge d\tau , \\
  \dbbar (\omega \krwedge \tau) & = \dbbar\omega \krwedge \tau + (-1)^{p+q}\omega \krwedge \dbbar\tau .
 \end{align*}
 Moreover, the cup products induced by $\rwedge$ and $\krwedge$ on $H^\bullet(M;\bC)$ and $H^{\bullet,\bullet}(M)$, respectively, are associative.
\end{lemma}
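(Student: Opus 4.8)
The engine of the whole lemma is the projection $\pi=\id-d\Gamma-\Gamma d\colon\COmega^\bullet M\to\sR^\bullet$, where $d$ is the ordinary de Rham differential. The plan is to first record three structural facts and then read off every assertion from them. The facts are: $\pi^2=\pi$ with $\pi|_{\sR^\bullet}=\id$ (it is a projection onto $\sR^\bullet$); it is a cochain map, since $d^2=0$ gives
\begin{equation*}
 d\pi = d-d\Gamma d = \pi d ;
\end{equation*}
and consequently $\sR^\bullet$ is a genuine de Rham subcomplex of $\COmega^\bullet M$, the Rumin differential being the restriction $d|_{\sR^\bullet}$. Indeed, for $\alpha\in\sR^\bullet$ one has $d\alpha=d\pi\alpha=\pi d\alpha\in\sR^\bullet$, so $d$ preserves $\sR^\bullet$ and agrees there with $\pi\circ d$; the apparent second-order character of the middle differential lives in the symbol map, not in $d$ itself, because the realized forms already encode first derivatives in their $\theta$-components. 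Moreover the identity $\id-\incl\pi=d\Gamma+\Gamma d$ exhibits $\Gamma$ as a cochain homotopy, so that $\incl\colon(\sR^\bullet,d)\hookrightarrow(\COmega^\bullet M,d)$ and $\pi$ are mutually inverse deformation retracts.

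Granting these, the Leibniz rule for $\rwedge$ is two lines. For $\omega\in\sR^{p,q}$ and $\tau\in\sR^{r,s}$, using $\omega\rwedge\tau=\pi(\omega\wedge\tau)$, $d\pi=\pi d$, $\pi^2=\pi$, and the ordinary Leibniz rule for $\wedge$,
\begin{equation*}
 d(\omega\rwedge\tau)=\pi d(\omega\wedge\tau)=\pi(d\omega\wedge\tau)+(-1)^{p+q}\pi(\omega\wedge d\tau) ;
\end{equation*}
since $\sR^\bullet$ is $d$-stable we have $d\omega\in\sR^\bullet$ and $d\tau\in\sR^\bullet$, so by definition of $\rwedge$ the right-hand side is exactly $d\omega\rwedge\tau+(-1)^{p+q}\omega\rwedge d\tau$. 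No homotopy correction survives precisely because the de Rham differential and the Rumin differential coincide on $\sR^\bullet$.

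For associativity of the induced cup product on $H^\bullet(M;\bC)$ I would transport the (associative) de Rham product along the retract. Because $\incl$ is a cochain map and $\pi\incl=\id$, $\incl\pi\simeq\id$, the maps $\incl_\ast$ and $\pi_\ast$ are mutually inverse isomorphisms between $H^\bullet(\sR^\bullet,d)$ and de Rham cohomology. For $d$-closed $\omega,\tau\in\sR^\bullet$ the class $[\omega\rwedge\tau]=[\pi(\omega\wedge\tau)]$ satisfies $\incl_\ast[\omega\rwedge\tau]=[\incl\pi(\omega\wedge\tau)]=[\omega\wedge\tau]$, the last step because $\incl\pi\simeq\id$ and $\omega\wedge\tau$ is closed. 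Thus $\incl_\ast$ is a ring isomorphism from $(H^\bullet(\sR^\bullet),\rwedge)$ to de Rham cohomology with its ordinary cup product, and associativity is inherited from the strict associativity of $\wedge$.

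The remaining work, and the place I expect the real difficulty, is the $\krwedge$ statement. Here $\krwedge=\pi^{p+r,q+s}\comp\rwedge$ keeps only the ``diagonal'' bidegree component of $\rwedge$, which by \cite{Case2021rumin}*{Lemma~8.12} is the whole product except in the mixed range, where an off-diagonal $\sR^{p+r+1,q+s-1}$-piece also appears. I would obtain the $\dbbar$-Leibniz rule by projecting the already-established $d$-Leibniz rule onto bidegree $(p+r,q+s+1)$ and identifying $\dbbar$ as the $(0,1)$-component of $d$; the crux is to check that the $\db$- and $\dhor$-images of the off-diagonal component do not leak into this bidegree, which forces a case analysis across the jump at total degree $3$. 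Associativity of $\krwedge$ on the Kohn--Rossi groups $H^{\bullet,\bullet}(M)$ cannot be imported from the de Rham computation, since $\dbbar$-cohomology is not the cohomology of the full complex; instead I would set up the analogue of the deformation-retract argument for the single differential $\dbbar$ on $(\sR^{\bullet,\bullet},\krwedge)$, reducing associativity on $\dbbar$-cohomology to the strict associativity of $\wedge$ together with the bidegree bookkeeping above. This bigraded bookkeeping is exactly the content of \cite{Case2021rumin}*{Theorems~8.8 and~8.16}, and is where essentially all of the genuine effort lies.
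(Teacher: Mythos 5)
The paper offers no proof of this lemma --- it is imported verbatim from \cite{Case2021rumin}*{Theorems~8.8 and~8.16} --- so there is nothing internal to compare against; I can only judge your argument on its own terms. The first half of your proposal is correct and is the natural argument: given that $\pi=\id-d\Gamma-\Gamma d$ is a projection onto the $d$-stable subspace $\sR^\bullet\subset\COmega^\bullet M$ and that the realized Rumin differential is the honest exterior derivative restricted there, the identity $d\pi=\pi d$ yields the Leibniz rule for $\rwedge$, and transporting along the deformation retract identifies the induced cup product on $H^\bullet(M;\bC)$ with the de Rham cup product, whence associativity. Your plan for the $\dbbar$-Leibniz rule is also the right one, and the bookkeeping does close up: the off-diagonal component of $\omega\rwedge\tau$ sits in $\sR^{p+r+1,q+s-1}$ and occurs only in total degree $3$ or $4$, where $\db$ and $\dbbar$ shift bidegree by $(1,0)$ and $(0,1)$, so nothing it produces can land in bidegree $(p+r,q+s+1)$; the anomalous shifts $(2,-1)$ for $\db$ and $(1,0)$ for $\dhor$ occur only in total degree $2$, where the product is purely diagonal. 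You should carry out this short case check rather than deferring it, but it is not a gap in substance.

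The genuine gap is the associativity of $\krwedge$ on $H^{\bullet,\bullet}(M)$. The proposed ``deformation-retract argument for the single differential $\dbbar$'' is not available: $(\sR^{\bullet,\bullet},\krwedge,\dbbar)$ is not exhibited as a retract of any strictly associative algebra carrying $\dbbar$, since the ambient algebra $(\COmega^\bullet M,\wedge)$ has only $d$ and the homotopy $\Gamma$ does not respect the bigrading. Moreover the issue cannot be reduced to ``strict associativity of $\wedge$ plus bookkeeping'': the associator of $\rwedge$ (hence of $\krwedge$) is already nonzero at the cochain level --- this is precisely why the paper speaks of a balanced $A_\infty$-algebra rather than a differential graded algebra. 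What is actually required is the explicit triple-product homotopy $m_3$ controlling the associator, together with a verification that its relevant bidegree components are $\dbbar$-exact (or vanish) when evaluated on $\dbbar$-closed arguments; this is the content of \cite{Case2021rumin}*{Theorem~8.16} and is the one part of the lemma your proposal does not reach.
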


We conclude this section with a review of the Hodge decomposition theorem for $\sR^{p,q}$ and some applications to the Kohn--Rossi cohomology groups.

Let $(M^5,T^{1,0},\theta)$ be a pseudohermitian five-manifold.
The Levi form induces an Hermitian inner product on $\sR^{p,q}$, $p+q\leq 2$, by
\begin{equation*}
 \lp \omega, \tau \rp := \frac{1}{p!q!}\omega_{\Alpha\bar\Beta}\otau^{\bar\Beta\Alpha} ,
\end{equation*}
where $\omega \equiv \frac{1}{p!q!}\omega_{\Alpha\bar\Beta}\,\theta^\Alpha\wedge\theta^{\bar\Beta}$ and $\tau \equiv \frac{1}{p!q!}\tau_{\Alpha\bar\Beta}\,\theta^\Alpha \wedge \theta^{\bar\Beta}$ in $\sR^{p,q}$, and $\otau_{\Beta\bar\Alpha} := \overline{\tau_{\Alpha\bar\Beta}}$.
We denote $\lv\omega\rv^2 := \lp\omega,\omega\rp$.
The \defn{$L^2$-inner product} is
\begin{equation*}
 \llp \omega, \tau \rrp := \frac{1}{2}\int_M \lp \omega, \tau \rp \, \theta \wedge d\theta^2 . 
\end{equation*}
The Hermitian inner product and the $L^2$-inner product on $\sR^{p,q}$, $p+q\geq3$, are defined similarly.
The Hodge star operator is defined by
\begin{equation*}
 \omega \rwedge \hodge\otau = \frac{1}{2}\lp \omega, \tau\rp\,\theta \wedge d\theta^2
\end{equation*}
and satisfies $\hodge^2=1$.
If $p+q\leq 2$, then~\cite{Case2021rumin}*{Lemma~10.5}
\begin{equation}
 \label{eqn:hodge-formula}
 \hodge \omega = \frac{(-1)^{\frac{(p+q)(p+q+1)}{2}}}{(2-p-q)!}\theta \wedge J\omega \wedge d\theta^{2-p-q} \in \sR^{3-q,2-p}
\end{equation}
for all $\omega\in\sR^{p,q}$.
We define
\begin{align*}
 \db^\ast & := (-1)^{p+q}\hodge \dbbar \hodge , \\
 \dbbar^\ast & := (-1)^{p+q} \hodge \db \hodge
\end{align*}
on $\sR^{p,q}$.
If $\omega \equiv \frac{1}{p!q!}\omega_{\Alpha\bar\Beta}\,\theta^\Alpha\wedge\theta^{\bar\Beta}$ in $\sR^{p,q}$, $p+q\leq 2$, then~\cite{Case2021rumin}*{Lemma~10.11}
\begin{equation}
 \label{eqn:dbbar-adjoint-formula}
 \begin{aligned}
  \db^\ast\omega & \equiv -\frac{1}{(p-1)!q!}\nabla^\mu\omega_{\mu\Alpha^\prime\bar\Beta}\,\theta^{\Alpha^\prime} \wedge \theta^{\bar\Beta} && \text{in $\sR^{p-1,q}$} , \\
  \dbbar^\ast\omega & \equiv -\frac{(-1)^p}{p!(q-1)!}\nabla^{\bar\nu}\omega_{\Alpha\bar\nu\bar\Beta^\prime}\,\theta^{\Alpha}\wedge\theta^{\bar\Beta^\prime} && \text{in $\sR^{p,q-1}$} .
 \end{aligned}
\end{equation}

The \defn{Kohn Laplacian} $\Box_b \colon \sR^{p,q} \to \sR^{p,q}$ is
\begin{align*}
 \Box_b & := \frac{2-p-q}{3-p-q}\dbbar\dbbar^\ast + \dbbar^\ast\dbbar , && \text{if $p+q\leq 1$} , \\
 \Box_b & := \dbbar^\ast\dbbar + \dbbar(\Box_b + \oBox_b)\dbbar^\ast , && \text{if $p+q = 2$} , 
\end{align*}
and satisfies $\Box_b\hodge\oomega = \hodge \overline{\Box_b\omega}$.
Note that $\Box_b$ is formally self-adjoint with respect to the $L^2$-inner product and that $\ker\Box_b = \ker\dbbar \cap \ker \dbbar^\ast$.
On closed, strictly pseudoconvex, pseudohermitian five-manifolds, $\Box_b$ is hypoelliptic on $\sR^{p,q}$ if and only if $q=1$~\cite{Case2021rumin}*{Propositions~13.1 and 13.4}.
Nevertheless, we have the following Hodge decomposition theorem valid in all bidegrees.

\begin{theorem}[\cite{Case2021rumin}*{Theorem~14.3 and Corollary~14.6}]
 \label{hodge}
 Let $(M^5,T^{1,0},\theta)$ be a closed, strictly pseudoconvex, pseudohermitian five-manifold.
 Then
 \begin{equation*}
  \sR^{p,q} = \ker\Box_b \oplus \im\dbbar \oplus \im\dbbar^\ast ,
 \end{equation*}
 where each summand on the right-hand side is understood as a subspace of $\sR^{p,q}$.
 In particular, the identity map induces an isomorphism $\ker\Box_b \cap \sR^{p,q} \cong H^{p,q}(M)$.
 Moreover, if $q=1$, then $\ker\Box_b$ is finite-dimensional.
\end{theorem}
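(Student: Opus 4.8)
The plan is to isolate the single analytic input---that $\Box_b$ is hypoelliptic exactly in the degrees $q=1$---and then propagate its consequences to the degrees $q\in\{0,2\}$ using the intertwining $\Box_b\hodge\oomega = \hodge\overline{\Box_b\omega}$ together with the fact that a densely defined operator has closed range if and only if its adjoint does. The backbone of every case is that the three candidate summands are mutually $L^2$-orthogonal: $\im\dbbar\perp\im\dbbar^\ast$ because $\dbbar^2=0$, while $\ker\Box_b = \ker\dbbar\cap\ker\dbbar^\ast$ is orthogonal to both. Hence the only genuine content is that the summands span $\sR^{p,q}$ and that the resulting pieces are smooth.

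First I would dispose of the hypoelliptic degrees $q=1$. On a closed manifold a formally self-adjoint hypoelliptic operator has a finite-dimensional kernel consisting of smooth forms and admits a Green's operator $G$ with $\Box_b G = G\Box_b = \id - H$, where $H$ is the $L^2$-orthogonal projection onto $\ker\Box_b$; this already yields the finite-dimensionality asserted for $q=1$. Inspecting the definition of $\Box_b$ in each bidegree with $q=1$ shows that every summand begins with either $\dbbar$ or $\dbbar^\ast$, so $\Box_b G\omega\in\im\dbbar+\im\dbbar^\ast$ for all $\omega$. The identity $\omega = H\omega + \Box_b G\omega$ then exhibits $\sR^{p,1}$ as $\ker\Box_b+\im\dbbar+\im\dbbar^\ast$, and orthogonality promotes this to a direct sum.

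The main obstacle is the non-hypoelliptic degrees $q\in\{0,2\}$, where $\ker\Box_b$ is typically infinite-dimensional and no parametrix for $\Box_b$ exists. By the duality $\Box_b\hodge\oomega = \hodge\overline{\Box_b\omega}$, which conjugates $\Box_b$ on $\sR^{p,q}$ to $\Box_b$ on $\sR^{3-p,2-q}$, it suffices to treat $q=0$. There the operator $\dbbar^\ast$ vanishes on $\sR^{p,0}$ and no $\im\dbbar$ reaches $\sR^{p,0}$, so $\ker\Box_b = \ker\dbbar$ and the claim collapses to $\sR^{p,0} = \ker\dbbar\oplus\im\dbbar^\ast$. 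Since the summand $\im\dbbar\subseteq\sR^{p,1}$ produced above is closed, $\dbbar\colon\sR^{p,0}\to\sR^{p,1}$ has closed range, and therefore so does $\dbbar^\ast\colon\sR^{p,1}\to\sR^{p,0}$; this furnishes the $L^2$-orthogonal splitting. The delicate point---where I expect to spend the most effort---is the \emph{regularity} of the pieces, since a smooth $\omega$ must split into smooth summands even though $\Box_b$ is not hypoelliptic here. I would obtain this by transferring regularity from the good degree: as $\dbbar\omega\in\sR^{p,1}$ is orthogonal to $\ker\Box_b$ it lies in $\im\Box_b$, so $G\dbbar\omega$ is a well-defined smooth form, and a short computation---using $\dbbar^2=0$ to kill the $\dbbar^\ast\dbbar$-component of $G\dbbar\omega$---produces a smooth coexact form $\sigma$ with $\dbbar\sigma = \dbbar\omega$. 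Then $\omega-\sigma\in\ker\dbbar$ is smooth, and by the $L^2$-splitting these are the Hodge components of $\omega$. Carrying this out uniformly in $p$ means tracking the bidegree-dependent coefficients and, in the bidegree where $\Box_b$ carries the extra factor $\Box_b+\oBox_b$, the correspondingly dressed solution operator; verifying that this dressed solution is genuinely coexact is the crux, but the subellipticity at $q=1$ is exactly what makes each such operator smoothing.

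Finally, the cohomological conclusion is formal. Given the decomposition, a $\dbbar$-closed $\omega = h + \dbbar\alpha + \dbbar^\ast\beta$ satisfies $0 = \dbbar\omega = \dbbar\dbbar^\ast\beta$, whence $\lV\dbbar^\ast\beta\rV^2 = \llp\dbbar\dbbar^\ast\beta,\beta\rrp = 0$ and the coexact component vanishes. Thus $\ker\dbbar\cap\sR^{p,q} = \ker\Box_b\oplus\im\dbbar$, and passing to the quotient by $\im\dbbar$ shows that the identity map induces the isomorphism $\ker\Box_b\cong H^{p,q}(M)$.
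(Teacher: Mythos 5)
First, a point of comparison: the paper does not prove this statement at all --- it is imported verbatim from \cite{Case2021rumin}*{Theorem~14.3 and Corollary~14.6} --- so there is no in-paper argument to measure yours against, and your proposal must stand on its own. Its overall shape is reasonable: mutual orthogonality of the three summands, the subelliptic theory of $\Box_b$ in the degrees $q=1$, and propagation to $q=0$ (hence to $q=2$ via $\omega\mapsto\hodge\oomega$) through closed-range arguments. The $q=1$ case and the final cohomological paragraph are fine, modulo the standard caveat that bare hypoellipticity does not by itself yield a finite-dimensional kernel, closed range, and a Green's operator; you need the subelliptic estimates behind the cited hypoellipticity, and you need the $L^2$ (not merely smooth) version of the $q=1$ decomposition before you may assert that $\im\dbbar\subset\sR^{p,1}$ is closed.

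The genuine gap sits exactly where you place the crux, and your proposed resolution does not touch it. For $\dbbar\colon\sR^{1,0}\to\sR^{1,1}$ (and, dually, for the decomposition of $\sR^{2,2}$), the Kohn Laplacian on the target is $\Box_b=\dbbar^\ast\dbbar+\dbbar(\Box_b+\oBox_b)\dbbar^\ast$. Your orthogonality trick correctly kills the $\dbbar^\ast\dbbar$-component of $\Box_b G\dbbar\omega$, but what survives is $\dbbar\omega=\dbbar\sigma$ with $\sigma=(\Box_b+\oBox_b)\dbbar^\ast G\dbbar\omega$, and this $\sigma$ is \emph{not} coexact, nor visibly in $\ker\dbbar\oplus\im\dbbar^\ast$: the dressing $(\Box_b+\oBox_b)$ sits between $\dbbar$ and $\dbbar^\ast$ and does not commute with $\dbbar^\ast$, and its $\oBox_b$-summand produces terms of the form $\tfrac12\db\db^\ast\eta+\db^\ast\db\eta$ belonging to the conjugate complex. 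Observing that subellipticity makes the dressed operator ``smoothing'' answers a regularity question, but the obstruction here is algebraic/orthogonal rather than analytic: you must show $\sigma\in\ker\dbbar\oplus\dbbar^\ast(\sR^{1,1})$, which is precisely the Hodge decomposition of $\sR^{1,0}$ you are in the middle of proving, so the argument is circular at this bidegree. Closing the gap needs an additional input --- for instance, exploiting that $\Box_b$ restricted to $\overline{\im\dbbar}\subset\sR^{1,1}$ reduces to $\dbbar(\Box_b+\oBox_b)\dbbar^\ast$ combined with a separate analysis of the (subelliptic) operator $\Box_b+\oBox_b$ on $\sR^{1,0}$, or an appeal to the conjugate $\db$-Hodge theory --- none of which appears in your sketch.
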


We recover Serre duality from \cref{hodge} in the usual way.
It is convenient to record this in an unorthodox manner.

\begin{corollary}[Serre Duality]
 Let $(M^5,T^{1,0})$ be a closed, strictly pseudoconvex five-manifold.
 The map $(\omega, \tau) \mapsto \int \omega \rwedge \otau$ induces a perfect pairing
 \begin{equation*}
  \kJ \colon H^{0,1}(M) \times H_{\db}^{2,2}(M) \to \bC
 \end{equation*}
\end{corollary}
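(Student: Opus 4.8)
The plan is to recover the pairing from the Hodge decomposition of \cref{hodge} together with its complex conjugate, using the Hodge star to turn the (complex bilinear) cup product into the positive-definite $L^2$-inner product.

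I would first record that the pairing descends to cohomology. Fix $\omega\in\ker\dbbar\cap\sR^{0,1}$ and $\tau\in\ker\db\cap\sR^{2,2}$. Since $\overline{\sR^{2,2}}=\sR^{3,1}$ and $\dbbar\otau=\overline{\db\tau}=0$, the form $\otau$ lies in $\ker\dbbar\cap\sR^{3,1}$. If $\omega$ is changed by $\dbbar f$ with $f\in\sR^{0,0}$, then \cref{leibnitz} gives $\dbbar f\rwedge\otau=\dbbar(f\rwedge\otau)$ with $f\rwedge\otau\in\sR^{3,1}$; as $\db$ vanishes on $\sR^{3,1}$ we have $\dbbar=d$ there, so $\int_M\dbbar(f\rwedge\otau)=\int_M d(f\rwedge\otau)=0$ by Stokes' theorem. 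Replacing $\tau$ by $\db g$ with $g\in\sR^{1,2}$ changes $\otau$ within $\sR^{3,1}$ by $\dbbar\bar g$, and the same argument applied to $\dbbar(\omega\rwedge\bar g)=-\omega\rwedge\dbbar\bar g$ (using $\dbbar\omega=0$) shows the integral is unchanged. Hence $\kJ$ is well-defined on $H^{0,1}(M)\times H_{\db}^{2,2}(M)$.

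Next I would compute it on harmonic representatives. By \cref{hodge} each class in $H^{0,1}(M)$ has a unique representative $\omega\in\ker\Box_b\cap\sR^{0,1}$, and the conjugate of \cref{hodge} identifies $H_{\db}^{2,2}(M)$ with $\ker\oBox_b\cap\sR^{2,2}$. Given such an $\omega$, set $\gamma:=\hodge\oomega\in\sR^{3,1}$. The intertwining $\Box_b\hodge\oomega=\hodge\overline{\Box_b\omega}$ shows $\gamma\in\ker\Box_b$, so $\dbbar\gamma=0$, whence $\tau:=\overline{\gamma}\in\sR^{2,2}$ satisfies $\db\tau=0$ and $\tau\in\ker\oBox_b$; thus $[\tau]$ is a harmonic class in $H_{\db}^{2,2}(M)$. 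Because $\otau=\gamma=\hodge\oomega$, the defining property of the Hodge star yields
\[
 \kJ([\omega],[\tau])=\int_M\omega\rwedge\hodge\oomega=\llp\omega,\omega\rrp=\lV\omega\rV^2 .
\]
The assignment $\omega\mapsto\overline{\hodge\oomega}$ is a complex-linear bijection from $\ker\Box_b\cap\sR^{0,1}$ onto $\ker\oBox_b\cap\sR^{2,2}$, so these spaces (finite-dimensional, by the $q=1$ case of \cref{hodge} and its conjugate) have equal dimension. Since the pairing takes the value $\lV\omega\rV^2>0$ on these matched representatives, it is nondegenerate in the first variable, and equality of dimensions promotes this to perfectness.

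The step I expect to require the most care is the bidegree bookkeeping in the range $p+q\geq3$: one must verify that conjugation sends $\sR^{2,2}$ to $\sR^{3,1}$, that $\hodge$ sends $\sR^{0,1}$ to $\sR^{3,1}$, and that $\db$ vanishes on $\sR^{3,1}$, so that $\hodge\oomega$ is genuinely $\dbbar$-closed and the cup product $\int_M\omega\rwedge\hodge\oomega$ reproduces the $L^2$-inner product exactly, rather than a degenerate off-diagonal pairing. Once these identifications are fixed, the remaining verifications are formal.
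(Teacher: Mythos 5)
Your proof is correct and follows essentially the same route as the paper: well-definedness via Stokes' theorem, passage to $\Box_b$-harmonic representatives via \cref{hodge}, and the key computation $\kJ([\omega],[\hodge\omega])=\int_M\omega\rwedge\hodge\oomega=\llp\omega,\omega\rrp$. The only (harmless) variation is that where the paper checks nondegeneracy separately in each slot, you check it in the first slot and then invoke the equality $\dim H^{0,1}(M)=\dim H_{\db}^{2,2}(M)$ furnished by the Hodge star to conclude perfectness.
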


\begin{proof}
 Stokes' Theorem implies that if $f\in\sR^{0,0}$ and $\tau \in \ker\db \cap \sR^{2,2}$, then
 \begin{equation*}
  \int_M \dbbar f \rwedge \otau = \int_M df \rwedge \otau = -\int_M f \rwedge d\otau = -\int_M f \rwedge \overline{\db\tau} = 0 .
 \end{equation*}
 Similarly, if $\omega \in \ker\dbbar \cap \sR^{0,1}$ and $\rho \in \sR^{1,2}$, then $\int \omega \rwedge \overline{\db\rho} = 0$.
 Therefore $\kJ$ is well-defined.
 
 \Cref{hodge} implies that $H^{0,1}(M)$ and $H_{\db}^{2,2}(M)$ are finite-dimensional.
 Thus it suffices to show that $\kJ$ is nondegenerate.
 Suppose that $[\omega]\in H^{0,1}(M)$ is such that $\kJ([\omega],[\tau])=0$ for all $[\tau] \in H_{\db}^{2,2}(M)$.
 By \cref{hodge}, we may assume that $\omega \in \ker\Box_b$.
 Then $\hodge\omega \in \ker \db \cap \sR^{2,2}$.
 Since
 \begin{equation*}
  0 = \kJ( [\omega], [\hodge\omega] ) = \int_M \omega \rwedge \hodge\oomega = \frac{1}{2}\int_M \lv\omega\rv^2\,\theta \wedge d\theta^2 ,
 \end{equation*}
 we conclude that $[\omega]=0$.
 A similar argument shows that if $[\tau] \in H_{\db}^{2,2}(M)$ is such that $\kJ([\omega],[\tau])=0$ for all $[\omega] \in H^{0,1}(M)$, then $[\tau]=0$.
\end{proof}

\section{The operator $\mD$}
\label{sec:invariance}

Our $Q$-curvature operators are both built from the following operator.

\begin{definition}
 Let $(M^5,T^{1,0},\theta)$ be a pseudohermitian five-manifold.
 We define $\mD \colon \sR^{1,1} \to \sR^{2,2}$ by
 \begin{equation*}
  \mD\omega := \hodge \db^\ast \left( \db\db^\ast + \dbbar\dbbar^\ast + P \rwedge \right)\omega + 2\ell^\theta \krwedge \omega .
 \end{equation*}
\end{definition}

Equations~\eqref{eqn:db-low}, \eqref{eqn:rwedge-11-11}, \eqref{eqn:hodge-formula}, and~\eqref{eqn:dbbar-adjoint-formula} imply that if $\omega\equiv\omega_{\alpha\bar\beta}\,\theta^\alpha\wedge\theta^{\bar\beta}$ in $\sR^{1,1}$, then
\begin{multline}
 \label{eqn:mD-frame}
 \mD\omega = -i\Bigl[ \nabla^\gamma\left( \nabla_{\bar\beta}\nabla^{\bar\nu}\omega_{\gamma\bar\nu} + \nabla_\gamma\nabla^\mu\omega_{\mu\bar\beta} - h_{\gamma\bar\beta}\nabla^\mu\nabla^{\bar\nu}\omega_{\mu\bar\nu} - P\omega_{\gamma\bar\beta} \right) \\
  + \nabla_{\bar\beta}\left( E^{\bar\nu\mu}\omega_{\mu\bar\nu}\right) + 2E^{\bar\nu}{}_{\bar\beta}\nabla^\mu\omega_{\mu\bar\nu} + 2\omega^{\bar\nu}{}_{\bar\beta}\nabla^\mu E_{\mu\bar\nu} \Bigr] \, \theta \wedge \theta^{\bar\beta} \wedge d\theta .
\end{multline}

In this section we compute the transformation law for $\mD$ under change of contact form.
We begin by computing the transformation law for the second-order factor in the first summand of $\mD$.

\begin{lemma}
 \label{second-order-transformation}
 Let $(M^5,T^{1,0},\theta)$ be a pseudohermitian five-manifold.
 Consider the operator $D\colon\sR^{1,1} \to \sR^{2,1}$,
 \begin{equation*}
  D := \hodge \left( \db\db^\ast + \dbbar\dbbar^\ast + P \rwedge \right) .
 \end{equation*}
 Then
 \begin{equation*}
  D^{e^\Upsilon\theta}(\omega) = D^\theta(\omega) - \pi^{2,1}\left( d_b^c\Upsilon \rwedge \omega \right) 
 \end{equation*}
 for all $\omega\in\sR^{1,1}$ and all $\Upsilon \in C^\infty(M)$.
\end{lemma}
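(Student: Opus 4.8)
The plan is to compute directly in the admissible coframe $\{\htheta^\alpha = \theta^\alpha + i\Upsilon^\alpha\theta\}$ attached to $\htheta = e^\Upsilon\theta$ by \cref{transformation}, tracking how each of the three summands of $D$ responds to the change of contact form. Three elementary scaling facts organize everything. First, an element of $\sR^{p,q}$ with $p+q\leq 2$ is determined by its restriction to $H\otimes\bC$, and since $\htheta^\alpha$ and $\theta^\alpha$ agree there, its coframe components are unchanged; only the Levi form rescales, $\hh_{\alpha\bar\beta} = e^\Upsilon h_{\alpha\bar\beta}$, so that raising an index costs a factor $e^{-\Upsilon}$. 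Second, \eqref{eqn:hodge-formula} gives $\hodge^{\htheta}\xi = e^{(3-p-q)\Upsilon}\hodge\xi$ on $\sR^{p,q}$ for $p+q\leq 2$ and, via $\hodge^2 = 1$ and the $C^\infty$-linearity of $\hodge$, $\hodge^{\htheta}\xi = e^{(2-p-q)\Upsilon}\hodge\xi$ on $\sR^{p,q}$ for $p+q\geq 3$. Third, $\db$ and $\dbbar$ are CR invariant, and $\dbbar$ obeys the Leibniz rule of \cref{leibnitz}.

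The curvature term is immediate. Since $P\rwedge\omega = P\omega$, combining the trace of the transformation of $P_{\alpha\bar\beta}$ in \cref{transformation} with $\hh^{\alpha\bar\beta} = e^{-\Upsilon}h^{\alpha\bar\beta}$ gives $\hP = e^{-\Upsilon}\bigl(P - \tfrac12(\Upsilon_\mu{}^\mu + \Upsilon^\mu{}_\mu) - \Upsilon_\mu\Upsilon^\mu\bigr)$, whence $\hodge^{\htheta}(\hP\,\omega) = \bigl(P - \tfrac12(\Upsilon_\mu{}^\mu + \Upsilon^\mu{}_\mu) - \Upsilon_\mu\Upsilon^\mu\bigr)\hodge\omega$ by $C^\infty$-linearity of $\hodge$. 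In particular this summand already contributes second-order and quadratic terms in $\Upsilon$ that are absent from the claimed answer, so they must be cancelled by the other two summands.

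For $\hodge\db\db^\ast$ I use $\db^\ast = \hodge\dbbar\hodge$ on $\sR^{1,1}$, exploiting that the \emph{inner} operator is $\dbbar$: when $\hodge^{\htheta}$ produces the factor $e^\Upsilon$, that $\dbbar$ differentiates it, and the Leibniz rule of \cref{leibnitz} yields $(\db^\ast)^{\htheta}\omega = e^{-\Upsilon}\bigl(\db^\ast\omega + \hodge(\dbbar\Upsilon\krwedge\hodge\omega)\bigr)$. Applying the invariant $\db$ to this via the frame formula \eqref{eqn:db-low} (which reproduces an effective Leibniz rule at the cost of the connection correction $-\Upsilon_\alpha$) and then $\hodge^{\htheta}$ gives the full transformation of $\hodge\db\db^\ast$. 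The companion summand $\hodge\dbbar\dbbar^\ast$ need not be treated independently: $\hodge$ is real and conjugation interchanges $\db$ with $\dbbar$ and $\db^\ast$ with $\dbbar^\ast$, so $\hodge\dbbar\dbbar^\ast\tau = \overline{\hodge\db\db^\ast\otau}$, and its transformation follows by conjugating, the coframe change $\htheta^\alpha$ conjugating to $\htheta^{\bar\alpha}$.

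The main obstacle is the final assembly: adding the three contributions and checking that the second-order and quadratic-in-$\Upsilon$ terms cancel, leaving exactly $-\pi^{2,1}(d_b^c\Upsilon \rwedge \omega)$. This requires commuting covariant derivatives of $\Upsilon$, since the commutators $[\nabla_\alpha,\nabla_{\bar\beta}]$ introduce the curvature and torsion terms that reconcile the two orderings $\Upsilon_\mu{}^\mu$ and $\Upsilon^\mu{}_\mu$ produced respectively by the $P$-term and the $\db^\ast$-term, and rewriting the surviving first-order remainder by means of the explicit product formula \eqref{eqn:rwedge-11-01} (with $\rho = \dbbar\Upsilon$) and its $\db\Upsilon$-analogue, together with $d_b^c\Upsilon = i\dbbar\Upsilon - i\db\Upsilon$ and the projection $\pi^{2,1}$. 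One must also keep every intermediate expression trace-free so that it genuinely represents an element of the stated bidegree, consistent with the frame expression \eqref{eqn:mD-frame} for the related operator $\mD$. I expect the curvature and torsion contributions generated by the commutators to be precisely what converts the naive second-order remainder into the claimed first-order product, which is the step most likely to require careful sign and coefficient bookkeeping.
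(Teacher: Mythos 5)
Your strategy is sound and genuinely different from the paper's. The paper works entirely from the explicit frame formula for $D\omega$ and computes only the first conformal variation $\left.\frac{\partial}{\partial t}\right|_{t=0}D^{e^{t\Upsilon}\theta}\omega$ using \cref{transformation} and the commutator identities; comparing with the explicit formula \eqref{eqn:rwedge-11-01} for $\pi^{2,1}(i\db\Upsilon\rwedge\omega)$ and conjugating gives the linearized law, and the full statement follows because the claimed correction $-\pi^{2,1}(d_b^c\Upsilon\rwedge\omega)$ is itself built from CR invariant operations, so the $t$-derivative is independent of $t$ and can be integrated. You instead compute $D^{e^\Upsilon\theta}$ outright by tracking conformal weights. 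Your individual ingredients check out: the components of an $\sR^{p,q}$-form with $p+q\leq2$ are unchanged in the coframe $\htheta^\alpha=\theta^\alpha+i\Upsilon^\alpha\theta$, the Levi form scales by $e^\Upsilon$, the Hodge star weights $e^{(3-p-q)\Upsilon}$ and $e^{(2-p-q)\Upsilon}$ are correct, the trace of \cref{transformation} gives your formula for $\hP$, and the identity $(\db^\ast)^{e^\Upsilon\theta}\omega=e^{-\Upsilon}\bigl(\db^\ast\omega+\hodge(\dbbar\Upsilon\krwedge\hodge\omega)\bigr)$ on $\sR^{1,1}$ follows from \cref{leibnitz} exactly as you say; handling $\hodge\dbbar\dbbar^\ast$ by conjugation is the same device the paper uses for the conjugate term. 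What the paper's linearization buys is that all terms quadratic in $\Upsilon$ are invisible from the start; your route must generate them (from $\hP$ and from iterating the first-order corrections in $\hnabla$ and in $(\db^\ast)^{e^\Upsilon\theta}$) and then watch them cancel, which roughly doubles the bookkeeping.

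That said, as written the proposal stops exactly where the content of the lemma lies. You correctly predict that the commutators $[\nabla_\alpha,\nabla_{\bar\beta}]$ and the trace-free condition $\omega_\mu{}^\mu=0$ are what reconcile the $\Upsilon_\mu{}^\mu$ and $\Upsilon^\mu{}_\mu$ terms, and that the surviving first-order remainder should match $-\pi^{2,1}(d_b^c\Upsilon\rwedge\omega)$ via \eqref{eqn:rwedge-11-01}, but you neither carry out the cancellation of the quadratic terms nor perform the matching; for a purely computational lemma this verification \emph{is} the proof, so the argument is incomplete until it is done. If you finish along your lines, I would still recommend borrowing the paper's reduction: since every operator in the asserted correction term is CR invariant, it suffices to verify the identity to first order in $\Upsilon$ at $t=0$ and then integrate in $t$, which lets you discard at the outset all the quadratic terms you would otherwise have to cancel by hand.
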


\begin{proof}
 Equations~\eqref{eqn:db-low}, \eqref{eqn:hodge-formula}, and~\eqref{eqn:dbbar-adjoint-formula} imply that
 \begin{equation*}
  D\omega = \left( \nabla_\alpha\nabla^\mu\omega_{\mu\bar\beta} + \nabla_{\bar\beta}\nabla^{\bar\nu}\omega_{\alpha\bar\nu} - h_{\alpha\bar\beta}\nabla^\mu\nabla^{\bar\nu}\omega_{\mu\bar\nu} - P\omega_{\alpha\bar\beta} \right) \, \theta \wedge \theta^\alpha \wedge \theta^{\bar\beta} ,
 \end{equation*}
 for all $\omega \equiv \omega_{\alpha\bar\beta}\,\theta^\alpha \wedge \theta^{\bar\beta}$ in $\sR^{1,1}$.
 On the one hand, a direct computation using \cref{transformation}, commutator identities~\cite{Lee1988}*{Lemma~2.3}, and the fact $\omega_\mu{}^\mu=0$ yields
 \begin{align*}
  \left.\frac{\partial}{\partial t}\right|_{t=0} D^{e^{t\Upsilon}\theta}\omega & = \Bigl( \nabla_\alpha(\Upsilon^\mu\omega_{\mu\bar\beta}) - \Upsilon_\alpha\nabla^\mu\omega_{\mu\bar\beta}  + \nabla_{\bar\beta}(\Upsilon^{\bar\nu}\omega_{\alpha\bar\nu}) - \Upsilon_{\bar\beta}\nabla^{\bar\nu}\omega_{\alpha\bar\nu} \\
   & \quad - \Upsilon^{\bar\nu\mu}\omega_{\mu\bar\nu}h_{\alpha\bar\beta} + \frac{1}{2}(\Upsilon_\mu{}^\mu + \Upsilon^\mu{}_\mu)\omega_{\alpha\bar\beta} \Bigr)\,\theta \wedge \theta^\alpha \wedge \theta^{\bar\beta} .
 \end{align*}
 On the other hand, \eqref{eqn:rwedge-11-01} yields
 \begin{multline*}
  \pi^{2,1}(i\db\Upsilon \rwedge \omega) \\
   = \left( \frac{1}{2} \Upsilon_\mu{}^\mu \omega_{\alpha\bar\beta} + \nabla_{\bar\beta}(\Upsilon^{\bar\nu}\omega_{\alpha\bar\nu}) - \Upsilon_\alpha\nabla^\mu\omega_{\mu\bar\beta} - \frac{1}{2} \Upsilon^{\bar\nu\mu}\omega_{\mu\bar\nu}h_{\alpha\bar\beta} \right)\,\theta \wedge \theta^\alpha \wedge \theta^{\bar\beta} .
 \end{multline*}
 The conclusion follows from the CR invariance of $d_b^c$ and the fact~\cite{Case2021rumin}*{Lemma~5.2} that $\overline{\pi^{2,1}\tau}=\pi^{2,1}\otau$.
\end{proof}

We now compute the transformation law for $\mD$.

\begin{proposition}
 \label{fourth-order-transformation}
 Let $(M^5,T^{1,0},\theta)$ be a pseudohermitian five-manifold.
 Then
 \begin{equation*}
  \mD^{e^\Upsilon\theta}\omega = \mD^\theta\omega + i\db\dbbar\left( \Upsilon \rwedge \omega \right) - i\db\left( \Upsilon \rwedge \dbbar\omega \right) + i\dbbar\Upsilon \rwedge \dhor\omega - i\db\Upsilon \rwedge \dbbar\omega 
 \end{equation*}
 for all $\omega\in\sR^{1,1}$ and all $\Upsilon \in C^\infty(M)$.
\end{proposition}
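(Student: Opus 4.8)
The plan is to first rewrite $\mD$ so that its dependence on the operator $D$ of \cref{second-order-transformation} becomes visible. Set $\eta := (\db\db^\ast + \dbbar\dbbar^\ast + P\rwedge)\omega \in \sR^{1,1}$, so that $D\omega = \hodge\eta \in \sR^{2,1}$. Since $\db^\ast = \hodge\dbbar\hodge$ on $\sR^{1,1}$ (the sign $(-1)^{p+q}$ being $+1$ there) and $\hodge^2 = 1$, associativity gives $\hodge\db^\ast\eta = \dbbar\hodge\eta = \dbbar D\omega$, and hence
\begin{equation*}
 \mD\omega = \dbbar D\omega + 2\ell^\theta\krwedge\omega \in \sR^{2,2} .
\end{equation*}

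Next I would feed in the three available transformation inputs: \cref{second-order-transformation} for $D$, \cref{lee-form-transformation} for $\ell^\theta$, and the CR invariance of both $\dbbar$ and the product $\krwedge$. Together these give
\begin{equation*}
 \mD^{e^\Upsilon\theta}\omega = \mD^\theta\omega - \dbbar\pi^{2,1}(d_b^c\Upsilon\rwedge\omega) + 2i(\db\dbbar\Upsilon)\krwedge\omega ,
\end{equation*}
so that the proposition is reduced to the purely algebraic identity
\begin{multline*}
 -\dbbar\pi^{2,1}(d_b^c\Upsilon\rwedge\omega) + 2i(\db\dbbar\Upsilon)\krwedge\omega \\
 = i\db\dbbar(\Upsilon\rwedge\omega) - i\db(\Upsilon\rwedge\dbbar\omega) + i\dbbar\Upsilon\rwedge\dhor\omega - i\db\Upsilon\rwedge\dbbar\omega . \tag{$\star$}
\end{multline*}

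I would prove $(\star)$ using only the Leibniz rules of \cref{leibnitz} together with the relations built into the bigraded complex, with no frame computation. The one genuinely off-diagonal term is extracted as follows: applying the $\rwedge$-Leibniz rule to $\Upsilon\rwedge\omega = \Upsilon\omega$ gives $d(\Upsilon\omega) = d\Upsilon\rwedge\omega + \Upsilon\,d\omega$, and reading off the $\sR^{2,1}$-component (using $\db\omega\in\sR^{3,0}$, $\dhor\omega\in\sR^{2,1}$, $\dbbar\omega\in\sR^{1,2}$ and $\pi^{2,1}(\db\Upsilon\rwedge\omega) = \db\Upsilon\krwedge\omega$) yields $\pi^{2,1}(\dbbar\Upsilon\rwedge\omega) = \dhor(\Upsilon\omega) - \Upsilon\dhor\omega - \db\Upsilon\krwedge\omega$. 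Substituting $d_b^c\Upsilon = i\dbbar\Upsilon - i\db\Upsilon$ into the left side of $(\star)$, applying $\dbbar$, and invoking the $\krwedge$-Leibniz rule with $\dbbar\db\Upsilon = -\db\dbbar\Upsilon$ collapses the left side to $-i\dbbar\dhor(\Upsilon\omega) + i\dbbar(\Upsilon\dhor\omega) - 2i\db\Upsilon\krwedge\dbbar\omega$. The relation $\db\dbbar = -\dbbar\dhor$ on $\sR^{1,1}$ rewrites the first summand as $i\db\dbbar(\Upsilon\omega)$; a second $\krwedge$-Leibniz expansion turns $i\dbbar(\Upsilon\dhor\omega)$ into $i\dbbar\Upsilon\rwedge\dhor\omega - i\Upsilon\db\dbbar\omega$; and the $\rwedge$-Leibniz rule gives $\db(\Upsilon\dbbar\omega) = \db\Upsilon\rwedge\dbbar\omega + \Upsilon\db\dbbar\omega$. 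Matching these against the right side of $(\star)$, every term cancels.

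The main obstacle is bookkeeping rather than ideas. One must track which products $\varphi\rwedge\psi$ carry an off-diagonal $\sR^{p+r+1,q+s-1}$-summand and which are purely diagonal: the simplifications above rest on the fact that whenever one factor has total degree at least $3$ --- as for $\db\Upsilon\rwedge\dbbar\omega$ and $\dbbar\Upsilon\rwedge\dhor\omega$ --- the product is diagonal and $\rwedge$ agrees with $\krwedge$, so that the only off-diagonal contribution in $(\star)$ is the term $\pi^{2,1}(\dbbar\Upsilon\rwedge\omega)$ handled by the $d(\Upsilon\omega)$ expansion. Getting the signs right in the three Leibniz applications and in the two bigraded relations $\dbbar\db = -\db\dbbar$ on $\sR^{0,0}$ and $\db\dbbar = -\dbbar\dhor$ on $\sR^{1,1}$ is the only delicate point.
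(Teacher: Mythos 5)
Your proposal is correct and follows essentially the same route as the paper: both rewrite $\mD\omega = \dbbar D\omega + 2\ell^\theta\krwedge\omega$, feed in \cref{second-order-transformation} and \cref{lee-form-transformation}, and then reduce the remaining identity to the Leibniz rules of \cref{leibnitz} together with the bigraded relations $\dbbar\db=-\db\dbbar$ on $\sR^{0,0}$ and $\db\dbbar=-\dbbar\dhor$ on $\sR^{1,1}$. The only (cosmetic) difference is that you extract $\pi^{2,1}(\dbbar\Upsilon\rwedge\omega)$ by projecting the Leibniz expansion of $d(\Upsilon\omega)$ onto $\sR^{2,1}$, whereas the paper computes $\dbbar\pi^{2,1}(\dbbar\Upsilon\rwedge\omega)$ via $\pi^{2,2}d(\dbbar\Upsilon\rwedge\omega)$; both yield the same intermediate identity~\eqref{eqn:mD-with-krwedge}.
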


\begin{proof}
 Note that
 \begin{equation*}
  \mD\omega = \dbbar D\omega + 2\ell^\theta \krwedge \omega .
 \end{equation*}
 It follows from \cref{lee-form-transformation,second-order-transformation} that
 \begin{equation*}
  \mD^{e^\Upsilon\theta}\omega = \mD^\theta\omega - i\dbbar \pi^{2,1} \left( \dbbar\Upsilon \rwedge \omega - \db\Upsilon \rwedge \omega \right) + 2i\db\dbbar\Upsilon \krwedge \omega .
 \end{equation*}
 On the one hand, since $\pi^{2,1}(\db\Upsilon \rwedge \omega)=\db\Upsilon \krwedge \omega$, we deduce from \cref{leibnitz} that
 \begin{equation*}
  \dbbar \pi^{2,1} \left( \db\Upsilon \rwedge \omega \right) = \dbbar\db\Upsilon \krwedge \omega - \db\Upsilon \krwedge \dbbar\omega .
 \end{equation*}
 On the other hand, since $\pi^{2,1}(\dbbar\Upsilon \rwedge \omega) = \dbbar\Upsilon \rwedge \omega - \dbbar\Upsilon \krwedge\omega$, we compute from \cref{leibnitz} that
 \begin{align*}
  \dbbar \pi^{2,1} \left( \dbbar\Upsilon \rwedge \omega \right) & = \pi^{2,2}d\left( \dbbar\Upsilon \rwedge \omega \right) - \db\left( \dbbar\Upsilon \krwedge \omega \right) \\
   & = \pi^{2,2}\left( \db\dbbar\Upsilon \rwedge \omega - \dbbar\Upsilon \rwedge d\omega \right) - \db\dbbar \left( \Upsilon \krwedge \omega \right) + \db \left( \Upsilon \krwedge \dbbar\omega \right) \\
   & = \db\dbbar\Upsilon \krwedge \omega - \dbbar\Upsilon \krwedge \dhor\omega - \db\dbbar \left( \Upsilon \krwedge \omega \right) + \db \left( \Upsilon \krwedge \dbbar\omega \right) .
 \end{align*}
 It readily follows that
 \begin{equation}
  \label{eqn:mD-with-krwedge}
  \mD^{e^\Upsilon\theta}\omega = \mD^\theta\omega + i\db\dbbar\left( \Upsilon \krwedge \omega \right) - i\db\left( \Upsilon \krwedge \dbbar\omega \right) + i\dbbar\Upsilon \krwedge \dhor\omega - i\db\Upsilon \krwedge \dbbar\omega .
 \end{equation}
 The final conclusion follows from the fact~\cite{Case2021rumin}*{Lemma~8.12} that $\omega \rwedge \tau = \omega \krwedge \tau$ for all $\omega\in\sR^{p,q}$ and $\tau\in\sR^{r,s}$ such that $p+q+r+s\leq2$ or $\max\{p+q,r+s\}\geq3$.
\end{proof}

\section{The $Q$-curvature operator on $d$-closed $(1,1)$-forms}
\label{sec:bc}

In this section we introduce the $Q$-curvature operator on $d$-closed $(1,1)$-forms and establish some of its basic properties.
This operator is obtained by restriction from a multiple of the imaginary part of $\mD$.

\begin{definition}
 \label{defn:mQ}
 Let $(M^5,T^{1,0},\theta)$ be a pseudohermitian five-manifold.
 We define $Q_{1,1}^d \colon \sR^{1,1} \to \sR^{3,1} \oplus \sR^{2,2}$ by
 \begin{equation*}
  Q_{1,1}^d := -i\omD + i\mD ,
 \end{equation*}
 where $\omD\omega := \overline{\mD\oomega}$.
\end{definition}

The transformation law for $Q_{1,1}^d$ is linear in the conformal factor.

\begin{lemma}
 \label{mQ-transformation}
 Let $(M^5,T^{1,0},\theta)$ be a pseudohermitian five-manifold.
 Then
 \begin{equation*}
  (Q_{1,1}^d)^{e^\Upsilon\theta}\omega = (Q_{1,1}^d)^\theta\omega + d\dhor ( \Upsilon \rwedge \omega ) + d \left( \Upsilon \rwedge (\db\omega + \dbbar\omega) \right) - d\Upsilon \rwedge d_b^c\omega
 \end{equation*}
 for all $\omega\in\sR^{1,1}$ and all $\Upsilon \in C^\infty(M)$.
\end{lemma}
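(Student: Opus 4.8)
The plan is to reduce everything to \cref{fourth-order-transformation} together with the behaviour of the bigraded operators under complex conjugation. Since $Q_{1,1}^d = -i\omD + i\mD$ with $\omD\omega = \overline{\mD\oomega}$, I would first obtain the transformation law for $\omD$ by applying \cref{fourth-order-transformation} to $\oomega$ and conjugating the resulting identity. Here I use that $\Upsilon$ is real, that $\rwedge$ commutes with conjugation (because the projection $\pi$ does), and that $\overline{\db\eta} = \dbbar\overline{\eta}$, $\overline{\dbbar\eta} = \db\overline{\eta}$, $\overline{\dhor\eta} = \dhor\overline{\eta}$; the last three follow at once from the reality of $d = \db + \dhor + \dbbar$ by matching bidegrees. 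Combining the laws for $\mD$ and $\omD$ and clearing the factors $\pm i$ then yields
\[
 (Q_{1,1}^d)^{e^\Upsilon\theta}\omega = (Q_{1,1}^d)^\theta\omega + X ,
\]
where $X = -(\db\dbbar + \dbbar\db)(\Upsilon\rwedge\omega) + \db(\Upsilon\rwedge\dbbar\omega) + \dbbar(\Upsilon\rwedge\db\omega) - (\db\Upsilon + \dbbar\Upsilon)\rwedge\dhor\omega + \db\Upsilon\rwedge\dbbar\omega + \dbbar\Upsilon\rwedge\db\omega$.

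The remaining task is to recognize $X$ as the claimed right-hand side $T := d\dhor(\Upsilon\rwedge\omega) + d\bigl(\Upsilon\rwedge(\db\omega+\dbbar\omega)\bigr) - d\Upsilon\rwedge d_b^c\omega$, which I would do by expanding $T$ termwise using the Leibniz rule of \cref{leibnitz}. For the first term, $\dhor(\Upsilon\rwedge\omega) \in \sR^{2,1}$ has degree three, so $d = \db + \dbbar$ there; the two bigraded-complex relations on $\sR^{1,1}$ coming from \eqref{eqn:bigraded-rumin-complex} --- namely $\dbbar\db + \db\dhor = 0$ (the two-fold compositions into $\sR^{3,1}$) and $\db\dbbar + \dbbar\dhor = 0$ (those into $\sR^{2,2}$) --- convert $d\dhor(\Upsilon\rwedge\omega)$ into $-(\db\dbbar + \dbbar\db)(\Upsilon\rwedge\omega)$, matching the first term of $X$. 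For the middle terms I use that $\db\omega \in \sR^{3,0}$ and $\dbbar\omega \in \sR^{1,2}$, together with the vanishing of $\db$ on $\sR^{3,0}$ and of $\dbbar$ on $\sR^{1,2}$; hence $d(\Upsilon\rwedge\db\omega) = \dbbar(\Upsilon\rwedge\db\omega)$ and $d(\Upsilon\rwedge\dbbar\omega) = \db(\Upsilon\rwedge\dbbar\omega)$, again matching $X$.

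Finally I would expand $-d\Upsilon\rwedge d_b^c\omega$ using $d\Upsilon = \db\Upsilon + \dbbar\Upsilon$ and $d_b^c\omega = -\dbbar\omega + \dhor\omega - \db\omega$. After cancellation against the $\dhor$- and mixed terms of $X$, the only apparent discrepancy is the pair $\db\Upsilon\rwedge\db\omega + \dbbar\Upsilon\rwedge\dbbar\omega$; but $\db\Upsilon\rwedge\db\omega$ lands in $\sR^{4,0} = 0$ and $\dbbar\Upsilon\rwedge\dbbar\omega$ lands in $\sR^{1,3} = 0$, so both vanish for bidegree reasons. This gives $X = T$ and completes the proof. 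The main obstacle, and the step most prone to error, is the simultaneous bookkeeping of signs, factors of $i$, and bidegrees: one must correctly identify which two-fold compositions contribute to each complex relation and must not overlook that several $\rwedge$-products vanish only because their target spaces are trivial. A useful independent check is to verify a few of these cancellations against the explicit frame formula \eqref{eqn:mD-frame}.
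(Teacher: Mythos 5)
Your proposal is correct and follows the same route as the paper: the paper's own proof is a one-line deduction from \cref{fourth-order-transformation} (together with \cref{defn:mQ}), and you simply carry out the conjugation and bidegree bookkeeping that the paper leaves implicit. All the details check out --- the conjugated transformation law for $\omD$, the identities $\dbbar\db+\db\dhor=0$ and $\db\dbbar+\dbbar\dhor=0$ on $\sR^{1,1}$, and the vanishing of the products landing in $\sR^{4,0}$ and $\sR^{1,3}$.
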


\begin{proof}
 This follows immediately from \cref{fourth-order-transformation}.
\end{proof}

Restricting \cref{mQ-transformation} to $d$-closed $(1,1)$-forms yields \cref{mQ01-definition-and-properties}.

\begin{proof}[Proof of \cref{mQ01-definition-and-properties}]
 This follows immediately from \cref{mQ-transformation}.
\end{proof}

\Cref{mQ01-definition-and-properties} motivates the following definition:

\begin{definition}
 Let $(M^5,T^{1,0},\theta)$ be a pseudohermitian five-manifold.
 The \defn{$Q$-curvature operator on $d$-closed $(1,1)$-forms} is the restriction
 \begin{equation*}
  Q_{1,1}^d \colon \ker d \cap \sR^{1,1} \to \sR^{3,1} \oplus \sR^{2,2} .
 \end{equation*}
 The \defn{Bott--Chern GJMS operator on $(1,1)$-forms} is
 \begin{equation*}
  L_{1,1}^{\mathrm{BC}} := d\dhor \colon \sR^{1,1} \to \sR^{3,1} \oplus \sR^{2,2} .
 \end{equation*}
\end{definition}

We use the adjective Bott--Chern to emphasize the context --- $Q$-curvature operators on representatives of elements of $\HBC^{1,1}(M)$ --- in which $L_{1,1}^{\mathrm{BC}}$ arises.

\Cref{mQ01-definition-and-properties} implies that if $\omega\in\sR^{1,1}$ is $d$-closed, then the condition $dQ_{1,1}^d\omega=0$ is CR invariant.
Following Branson and Gover~\cite{BransonGover2005}, we introduce the CR invariant vector space of CR $d$-harmonic $(1,1)$-forms.

\begin{definition}
 Let $(M^5,T^{1,0})$ be a CR five-manifold.
 The space of \defn{CR $d$-harmonic $(1,1)$-forms} is
 \begin{equation*}
  \mHQ^{1,1} := \left\{ \omega \in \sR^{1,1} \suchthatcolon d\omega = 0, dQ_{1,1}^d\omega = 0 \right\} .
 \end{equation*}
\end{definition}

There is a long exact sequence which partially describes the canonical morphism $\mHQ^{1,1} \to \HBC^{1,1}(M)$, $\omega \mapsto [\omega]$ (cf.\ \cite{BransonGover2005}*{Proposition~2.5}).

\begin{proposition}
 \label{sP-long-exact-sequence}
 Let $(M^5,T^{1,0})$ be a CR five-manifold.
 Then the sequence
 \begin{equation}
  \label{eqn:sP-long-exact-sequence}
  0 \longrightarrow \HA^{0,0}(M) \longrightarrow \ker idQ_{1,1}^d\db\dbbar \overset{i\db\dbbar}{\longrightarrow} \mHQ^{1,1} \longrightarrow \HBC^{1,1}(M)
 \end{equation}
 is exact.
\end{proposition}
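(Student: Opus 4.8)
The plan is to treat this as a purely formal diagram chase, resting entirely on two facts already established: the identity $\Lscal = idQ_{1,1}^d\db\dbbar$ from \cref{recover-scalar}, and the relation $d\circ i\db\dbbar = 0$ (stated on $\sS^0$ and extended to complex-valued functions by $\bC$-linearity, since $d$, $\db$, $\dbbar$ are all $\bC$-linear). First I would spell out the three arrows and note that $\ker idQ_{1,1}^d\db\dbbar$ is exactly $\ker\Lscal \subseteq \sR^{0,0}$. The leftmost map $\HA^{0,0}(M) \hookrightarrow \ker\Lscal$ is the inclusion of $\ker\db\dbbar\cap\sR^{0,0}$, which makes sense because $\db\dbbar u = 0$ forces $\Lscal u = idQ_{1,1}^d\db\dbbar u = 0$; being an inclusion, it is injective, which gives exactness at $\HA^{0,0}(M)$.

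Next I would check that the middle arrow $i\db\dbbar$ really does land in $\mHQ^{1,1}$. For $u\in\ker\Lscal$, the form $i\db\dbbar u \in \sR^{1,1}$ is $d$-closed since $d\circ i\db\dbbar = 0$, and it satisfies $dQ_{1,1}^d(i\db\dbbar u) = i\,dQ_{1,1}^d\db\dbbar u = \Lscal u = 0$ by \cref{recover-scalar}; hence $i\db\dbbar u \in \mHQ^{1,1}$, so the map is well-defined.

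With the maps in place, exactness at the remaining two nodes is immediate. For exactness at $\ker\Lscal$, the kernel of $i\db\dbbar$ restricted to $\ker\Lscal$ is $\ker\Lscal \cap \ker\db\dbbar$; using $\ker\db\dbbar\cap\sR^{0,0} \subseteq \ker\Lscal$, this equals $\ker\db\dbbar\cap\sR^{0,0} = \HA^{0,0}(M)$, which is precisely the image of the inclusion. For exactness at $\mHQ^{1,1}$, an element of the kernel of $\mHQ^{1,1} \to \HBC^{1,1}(M)$ is an $\omega\in\mHQ^{1,1}$ of the form $\omega = i\db\dbbar v$ for some $v\in\sR^{0,0}$; then $\Lscal v = idQ_{1,1}^d\db\dbbar v = dQ_{1,1}^d\omega = 0$, so $v\in\ker\Lscal$ and $\omega$ lies in the image of the middle map, while the reverse containment is immediate from the previous paragraph.

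I do not expect any genuine obstacle: the statement is algebraic once \cref{recover-scalar} is available, and contains no analysis (no use of \cref{hodge} or strict pseudoconvexity). The only points demanding care are bookkeeping ones --- confirming that $d\circ i\db\dbbar = 0$ extends from $\sS^0$ to complex-valued arguments, and that the subgroup $\im i\db\dbbar \cap \sR^{1,1}$ in the definition of $\HBC^{1,1}(M)$ is exactly $i\db\dbbar(\sR^{0,0})$ so that $[\omega]=0$ is equivalent to $\omega = i\db\dbbar v$ for some $v$. Both are routine, so the proof should be short.
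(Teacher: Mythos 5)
Your proof is correct and follows exactly the route the paper takes: the paper's own proof is the one-line remark that exactness ``follows immediately from the definitions of $\HA^{0,0}(M)$ and $\HBC^{1,1}(M)$,'' and your write-up is simply that diagram chase spelled out (including the correct observations that $\ker\db\dbbar\cap\sR^{0,0}\subseteq\ker idQ_{1,1}^d\db\dbbar$ and that $\im\db\dbbar\cap\sR^{1,1}=i\db\dbbar(\sR^{0,0})$). The only cosmetic point is that the identity $\Lscal=idQ_{1,1}^d\db\dbbar$ is a definition rather than a consequence of \cref{recover-scalar}, so no citation is needed there.
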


\begin{proof}
 This follows immediately from the definitions of $\HA^{0,0}(M)$ and $\HBC^{1,1}(M)$, with the understanding that $\HA^{0,0}(M) \to \ker idQ_{1,1}^d\db\dbbar$ is the inclusion map.
\end{proof}

We conclude this section by discussing the real part of the operator $\mD$.

\begin{definition}
 Let $(M^5,T^{1,0},\theta)$ be a pseudohermitian five-manifold.
 We define $R_{1,1}^d \colon \sR^{1,1} \to \sR^{3,1} \oplus \sR^{2,2}$ by
 \begin{equation*}
  R_{1,1}^d := \omD + \mD .
 \end{equation*}
\end{definition}

Note that
\begin{equation}
 \label{eqn:mR}
 R_{1,1}^d = d\hodge \left( \db\db^\ast + \dbbar\dbbar^\ast + P \rwedge \right) + 2\ell^\theta \rwedge .
\end{equation}
In particular, if $\omega$ is $d$-closed, then $dR_{1,1}^d\omega=0$ and $[R_{1,1}^d\omega] \in H^4(M;\bR)$ is proportional to $[\omega] \cup c_1(T^{1,0})$.
In fact, $R_{1,1}^d\omega$ determines a CR invariant class in
\begin{equation*}
 \HBC^{2,2}(M) := \frac{ \ker \left( d \colon \sR^{3,1} \oplus \sR^{2,2} \to \sR^{3,2} \right) }{ \im \left( -i\dbbar\db + i\db\dbbar \colon \sR^{1,1} \to \sR^{3,1} \oplus \sR^{2,2} \right) } .
\end{equation*}
This follows from the transformation law for $R_{1,1}^d$ on $d$-closed $(1,1)$-forms.

\begin{lemma}
 \label{mR-transformation-on-closed}
 Let $(M^5,T^{1,0},\theta)$ be a pseudohermitian five-manifold.
 Then
 \begin{equation*}
  (R_{1,1}^d)^{e^\Upsilon\theta}\omega = (R_{1,1}^d)^\theta\omega + (i\db\dbbar - i\dbbar\db)\left( \Upsilon \rwedge \omega \right)
 \end{equation*}
 for all $\omega \in \ker d \cap \sR^{1,1}$ and all $\Upsilon \in C^\infty(M)$.
\end{lemma}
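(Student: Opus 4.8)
The plan is to read off the transformation law for $R_{1,1}^d = \omD + \mD$ directly from \cref{fourth-order-transformation}, exploiting the fact that $d$-closedness is considerably stronger in bidegree $(1,1)$ than closedness under any single operator. First I would record the key structural observation: for $\omega \in \sR^{1,1}$ the exterior derivative decomposes as $d\omega = \db\omega + \dhor\omega + \dbbar\omega$, and since $\db\omega \in \sR^{3,0}$, $\dhor\omega \in \sR^{2,1}$, and $\dbbar\omega \in \sR^{1,2}$ lie in three distinct bidegree summands of $\sR^3$, the hypothesis $d\omega = 0$ forces $\db\omega = \dhor\omega = \dbbar\omega = 0$ separately.

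With this in hand, I would apply \cref{fourth-order-transformation} to $\omega$. Every correction term there other than $i\db\dbbar(\Upsilon \rwedge \omega)$ carries a factor of $\dbbar\omega$ or $\dhor\omega$, so all of them vanish on $\ker d \cap \sR^{1,1}$, leaving
\begin{equation*}
 \mD^{e^\Upsilon\theta}\omega = \mD^\theta\omega + i\db\dbbar(\Upsilon \rwedge \omega) .
\end{equation*}
Since complex conjugation commutes with $d$, the form $\oomega$ is again $d$-closed, so the same identity holds with $\omega$ replaced by $\oomega$. Taking complex conjugates of that identity, and using that conjugation fixes the real function $\Upsilon$, interchanges $\db$ and $\dbbar$, and commutes with the projection $\pi$ (hence with $\rwedge$), I obtain, recalling $\omD\omega = \overline{\mD\oomega}$,
\begin{equation*}
 \omD^{e^\Upsilon\theta}\omega = \omD^\theta\omega - i\dbbar\db(\Upsilon \rwedge \omega) .
\end{equation*}

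Adding the two displays and using $R_{1,1}^d = \omD + \mD$ then yields the claimed formula
\begin{equation*}
 (R_{1,1}^d)^{e^\Upsilon\theta}\omega = (R_{1,1}^d)^\theta\omega + (i\db\dbbar - i\dbbar\db)(\Upsilon \rwedge \omega) .
\end{equation*}
The whole argument is bookkeeping once \cref{fourth-order-transformation} is available; the only points requiring care are the bidegree observation that collapses three of the four correction terms and the careful tracking of signs and of the $\db \leftrightarrow \dbbar$ swap under conjugation. I do not expect a genuine obstacle, since the substantive algebraic content has already been absorbed into \cref{fourth-order-transformation}.
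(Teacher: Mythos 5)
Your proposal is correct and is precisely the argument the paper intends: the paper's proof of this lemma is the one-line ``This follows immediately from \cref{fourth-order-transformation},'' and your write-up simply fills in the details (the bidegree argument forcing $\db\omega=\dhor\omega=\dbbar\omega=0$, and the conjugation step converting the $\mD$-identity into the $\omD$-identity with the sign flip and the $\db\leftrightarrow\dbbar$ swap). No gaps.
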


\begin{proof}
 This follows immediately from \cref{fourth-order-transformation}.
\end{proof}

\section{The $Q$-curvature operator on $\dbbar$-closed $(0,1)$-forms}
\label{sec:kr}

In this section we introduce the $Q$-curvature operator on $\dbbar$-closed $(0,1)$-forms and establish some of its basic properties.
We begin by defining the GJMS operator on $(0,1)$-forms.

\begin{definition}
 Let $(M^5,T^{1,0})$ be a CR five-manifold.
 The \defn{GJMS operator on $(0,1)$-forms}, $L_{0,1}^{\dbbar} \colon \sR^{0,1} \to \sR^{2,2}$, is
 \begin{equation*}
  L_{0,1}^{\dbbar} := i\db\dbbar\db .
 \end{equation*}
\end{definition}

Note that, since~\eqref{eqn:bigraded-rumin-complex} is a bigraded complex, $\ker \dbbar \subset \ker L_{0,1}^{\dbbar}$.

For comparison, the CR Paneitz operator~\cites{GrahamLee1988,Hirachi1990} in dimension three is, after composition with the Hodge star operator, equal to $-\db\dbbar\db$ (cf.\ \cite{CaseYang2020}).
Like the CR Paneitz operator, the GJMS operator on $(0,1)$-forms is a formally self-adjoint CR invariant operator.
Only formal self-adjointness requires a proof.

\begin{lemma}
 \label{L01-formally-self-adjoint}
 Let $(M^5,T^{1,0})$ be a closed CR five-manifold.
 Then the bilinear form $\kL_{0,1} \colon \sR^{0,1} \times \sR^{0,1} \to \bC$,
 \begin{equation*}
  \kL_{0,1}(\omega,\tau) := \int_M \omega \rwedge \overline{L_{0,1}^{\dbbar}\tau} ,
 \end{equation*}
 is Hermitian.
\end{lemma}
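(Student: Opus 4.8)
The plan is to prove the Hermitian symmetry $\kL_{0,1}(\omega,\tau)=\overline{\kL_{0,1}(\tau,\omega)}$ directly, by integration by parts, reducing everything to the defining relation $d^2=0$ of the bigraded complex. The first step is to record the conjugation rules forced by the reality of $d$: decomposing $\overline{d\eta}=d\overline{\eta}$ into bidegrees shows that $\overline{\db\eta}=\dbbar\overline{\eta}$, $\overline{\dbbar\eta}=\db\overline{\eta}$, and $\overline{\dhor\eta}=\dhor\overline{\eta}$ in every bidegree. Applying the first two of these three times to $L_{0,1}^{\dbbar}\tau=i\db\dbbar\db\tau$ gives $\overline{L_{0,1}^{\dbbar}\tau}=-i\,\dbbar\db\dbbar\overline{\tau}$, so that, writing $\sigma:=\overline{\tau}\in\sR^{1,0}$,
\begin{equation*}
 \kL_{0,1}(\omega,\tau)=-i\int_M\omega\rwedge\dbbar\db\dbbar\sigma .
\end{equation*}
On the other side, using that $\pi$ commutes with complex conjugation (so $\overline{\alpha\rwedge\beta}=\overline{\alpha}\rwedge\overline{\beta}$), one finds $\overline{\kL_{0,1}(\tau,\omega)}=\int_M\sigma\rwedge L_{0,1}^{\dbbar}\omega=i\int_M\sigma\rwedge\db\dbbar\db\omega$. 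Thus the lemma is equivalent to the single identity
\begin{equation*}
 -\int_M\omega\rwedge\dbbar\db\dbbar\sigma=\int_M\sigma\rwedge\db\dbbar\db\omega , \qquad \omega\in\sR^{0,1},\ \sigma\in\sR^{1,0}.
\end{equation*}

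The core of the argument is to transfer the operator $\dbbar\db\dbbar$ from $\sigma$ onto $\omega$ by three integrations by parts. Since $M$ is closed, $\int_M d\gamma=0$, and combined with the Leibniz rule of \cref{leibnitz} this yields $\int_M d\alpha\rwedge\beta=(-1)^{\lvert\alpha\rvert+1}\int_M\alpha\rwedge d\beta$. After each application I would replace $d$ by the single component of $\db+\dhor+\dbbar$ that survives the wedge under $\int_M$: all other components land in products lying in spaces $\sR^{p,q}$ with $p>3$ or $q>2$, which vanish. (One checks along the way that none of the surviving products fall into the two-term regime of $\rwedge$, because in each case one factor already has degree $\geq 3$ or the would-be second term lies in a zero space.) Peeling off the outer $\dbbar$ uses $\lvert\omega\rvert=1$ and gives $\int_M\omega\rwedge\dbbar\db\dbbar\sigma=\int_M\dbbar\omega\rwedge\db\dbbar\sigma$; peeling off the middle $\db$ uses $\lvert\dbbar\omega\rvert=2$ and introduces the only sign, $\int_M\dbbar\omega\rwedge\db\dbbar\sigma=-\int_M\db\dbbar\omega\rwedge\dbbar\sigma$; peeling off the last $\dbbar$ uses $\lvert\db\dbbar\omega\rvert=3$ and gives $\int_M\db\dbbar\omega\rwedge\dbbar\sigma=\int_M\dbbar\db\dbbar\omega\rwedge\sigma$. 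Altogether $\int_M\omega\rwedge\dbbar\db\dbbar\sigma=-\int_M\dbbar\db\dbbar\omega\rwedge\sigma$.

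It remains to rewrite $-\int_M\dbbar\db\dbbar\omega\rwedge\sigma$ as $\int_M\sigma\rwedge\db\dbbar\db\omega$. Here I would invoke the bigraded-complex property that the sum of all two-fold compositions of $\db$, $\dhor$, $\dbbar$ vanishes: decomposing $d^2=0$ by bidegree on $\sR^{0,1}$, $\sR^{0,2}$, and $\sR^{1,1}$ produces the relations $\db\dbbar=-\dhor\db$ and $\dbbar\db=-\dhor\dbbar$ on $\sR^{0,1}$, $\db\dhor=-\dbbar\db$ on $\sR^{0,2}$, and $\db\dbbar=-\dbbar\dhor$ on $\sR^{1,1}$, from which $\db\dbbar\db\omega=-\db\dhor\dbbar\omega=\dbbar\db\dbbar\omega$; that is, $\dbbar\db\dbbar=\db\dbbar\db$ on $\sR^{0,1}$. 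Finally, graded commutativity of $\rwedge$ under $\int_M$ (valid because $\pi$ is the identity in top degree, so $\int_M\alpha\rwedge\beta=(-1)^{\lvert\alpha\rvert\lvert\beta\rvert}\int_M\beta\rwedge\alpha$, here with sign $(-1)^{4\cdot 1}=1$) turns $\int_M\dbbar\db\dbbar\omega\rwedge\sigma=\int_M\db\dbbar\db\omega\rwedge\sigma$ into $\int_M\sigma\rwedge\db\dbbar\db\omega$, completing the proof.

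I expect the main obstacle to be the operator identity $\db\dbbar\db=\dbbar\db\dbbar$ on $\sR^{0,1}$: unlike in an honest bicomplex, $\db$ and $\dbbar$ do not simply anticommute here, and the identity necessarily routes through the second-order middle operator $\dhor$, so one must assemble several of the bidegree-decomposed $d^2=0$ relations in exactly the right order. The secondary source of friction is the bidegree bookkeeping in the three integrations by parts — keeping track of which components of $d$ survive each wedge and confirming that no two-term $\rwedge$ contributions are silently dropped.
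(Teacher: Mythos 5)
Your proof is correct and takes essentially the same route as the paper: both arguments rest on Stokes' Theorem together with the Leibniz rule for $\rwedge$ and on the identity $\db\dbbar\db=\dbbar\db\dbbar$ on $\sR^{0,1}$ (equivalently $L_{0,1}^{\dbbar}=i\dbbar\db\dbbar$), which the paper simply asserts as a consequence of the bigraded complex property and which you derive explicitly from the bidegree components of $d^2=0$. The only cosmetic difference is that you perform all three integrations by parts on one side, whereas the paper does two on $\kL_{0,1}(\omega,\tau)$ and one on $\overline{\kL_{0,1}(\tau,\omega)}$ before comparing.
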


\begin{proof}
 Let $\omega,\tau \in \sR^{0,1}$.
 Stokes' Theorem implies that
 \begin{equation*}
  \kL_{0,1}(\omega,\tau) = -i\int_M \omega \rwedge \dbbar\db\dbbar\otau = -i \int_M \dbbar\omega \rwedge \db\dbbar\otau = i\int_M \db\dbbar\omega \rwedge \dbbar\otau .
 \end{equation*}
 Since~\eqref{eqn:bigraded-rumin-complex} is a complex, it holds that $L_{0,1}^{\dbbar}=i\dbbar\db\dbbar$.
 Therefore
 \begin{equation*}
  \overline{\kL_{0,1}(\tau,\omega)} = i\int_M \otau \rwedge \dbbar\db\dbbar\omega = i\int_M \dbbar\otau \rwedge \db\dbbar\omega = \kL_{0,1}(\omega,\tau) . \qedhere
 \end{equation*}
\end{proof}

We now define the $Q$-curvature operator on $\dbbar$-closed $(0,1)$-forms.
This operator is obtained by restriction from an operator on $\sR^{0,1}$.

\begin{definition}
 Let $(M^5,T^{1,0},\theta)$ be a pseudohermitian five-manifold.
 We define $Q_{0,1}^{\dbbar} \colon \sR^{0,1} \to \sR^{2,2}$ by
 \begin{equation*}
  Q_{0,1}^{\dbbar}\omega := \mD\db\omega - \db \left( \omega \krwedge \ell^\theta \right)
 \end{equation*}
 for all $\omega\in\sR^{0,1}$.
\end{definition}

The transformation formula for $Q_{0,1}^{\dbbar}$ is linear in the conformal factor.

\begin{lemma}
 \label{Q01-transformation}
 Let $(M^5,T^{1,0},\theta)$ be a pseudohermitian five-manifold.
 Then
 \begin{multline*}
  (Q_{0,1}^{\dbbar})^{e^\Upsilon\theta}\omega = (Q_{0,1}^{\dbbar})^\theta\omega + i\db\dbbar\db(\Upsilon \krwedge \omega) \\ + i\db(\db\Upsilon \krwedge \dbbar\omega) + i\db(\Upsilon \krwedge \dhor\dbbar\omega) - i\dbbar\Upsilon \krwedge \db\dbbar\omega + i\db\Upsilon \krwedge \dhor\dbbar\omega
 \end{multline*}
 for all $\omega\in\sR^{0,1}$ and all $\Upsilon \in C^\infty(M)$.
\end{lemma}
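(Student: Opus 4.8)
The plan is to begin from the definition $Q_{0,1}^{\dbbar}\omega = \mD\db\omega - \db(\omega \krwedge \ell^\theta)$ and to isolate the dependence on the contact form. The operators $\db$, $\dbbar$, $\dhor$ and the product $\krwedge$ are all CR invariant, so the only $\theta$-dependence enters through $\mD$ and the Lee form $\ell^\theta$. Applying \cref{fourth-order-transformation} to the $(1,1)$-form $\eta := \db\omega$ and \cref{lee-form-transformation} to $\ell^\theta$ (so that the second summand contributes $-\db(\omega \krwedge i\db\dbbar\Upsilon)$) yields
\begin{multline*}
 (Q_{0,1}^{\dbbar})^{e^\Upsilon\theta}\omega = (Q_{0,1}^{\dbbar})^\theta\omega + i\db\dbbar(\Upsilon \rwedge \db\omega) - i\db(\Upsilon \rwedge \dbbar\db\omega) \\
 + i\dbbar\Upsilon \rwedge \dhor\db\omega - i\db\Upsilon \rwedge \dbbar\db\omega - i\db(\omega \krwedge \db\dbbar\Upsilon) .
\end{multline*}
It then remains to massage these five anomaly terms into the claimed form.

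The central manipulation is in the first anomaly term. Applying the $d$-Leibniz rule of \cref{leibnitz} to $\Upsilon \rwedge \omega \in \sR^{0,1}$ and projecting onto $\sR^{1,1}$ gives $\Upsilon \krwedge \db\omega = \db(\Upsilon \krwedge \omega) - \db\Upsilon \krwedge \omega$; since the two factors have total degree at most two, $\Upsilon \rwedge \db\omega = \Upsilon \krwedge \db\omega$, so this produces the desired leading term $i\db\dbbar\db(\Upsilon \krwedge \omega)$ together with a residual $-i\db\dbbar(\db\Upsilon \krwedge \omega)$. The $\dbbar$-Leibniz rule of \cref{leibnitz} (the sign being $-1$ because $\db\Upsilon$ has bidegree $(1,0)$) gives $\dbbar(\db\Upsilon \krwedge \omega) = \dbbar\db\Upsilon \krwedge \omega - \db\Upsilon \krwedge \dbbar\omega$; invoking the bigraded-complex relation $\dbbar\db = -\db\dbbar$ on $\sR^{0,0}$ then splits the residual into $i\db(\db\dbbar\Upsilon \krwedge \omega) + i\db(\db\Upsilon \krwedge \dbbar\omega)$, the second of which is already one of the target terms.

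For the three remaining anomaly terms I would use the two-step bigraded-complex identities on $\sR^{0,1}$, namely $\dbbar\db\omega = -\dhor\dbbar\omega$ (compositions into $\sR^{1,2}$) and $\dhor\db\omega = -\db\dbbar\omega$ (compositions into $\sR^{2,1}$). In each of $-i\db(\Upsilon \rwedge \dbbar\db\omega)$, $i\dbbar\Upsilon \rwedge \dhor\db\omega$, and $-i\db\Upsilon \rwedge \dbbar\db\omega$ the second factor has degree three, so $\rwedge = \krwedge$; substituting the above relations converts these into exactly $i\db(\Upsilon \krwedge \dhor\dbbar\omega)$, $-i\dbbar\Upsilon \krwedge \db\dbbar\omega$, and $i\db\Upsilon \krwedge \dhor\dbbar\omega$. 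Finally, the two leftover terms $i\db(\db\dbbar\Upsilon \krwedge \omega)$ and $-i\db(\omega \krwedge \db\dbbar\Upsilon)$ cancel: since $\rwedge$ is graded commutative (being inherited from the ordinary wedge via the projection $\pi$) and $\db\dbbar\Upsilon$ has even degree, applying $\pi^{1,2}$ gives $\db\dbbar\Upsilon \krwedge \omega = \omega \krwedge \db\dbbar\Upsilon$. Collecting the surviving terms produces precisely the asserted formula.

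I expect the main obstacle to be the careful bookkeeping of the distinction between $\rwedge$ and $\krwedge$: the two products agree except when $p+q+r+s=3$ with both factors of degree at most two, and several intermediate expressions — notably $\db\dbbar\Upsilon \krwedge \omega$ — fall into this exceptional regime. One must therefore verify at each step which product is in play before applying \cref{leibnitz} or the graded commutativity of $\rwedge$, and correctly invoke the relevant bigraded-complex relations that trade $\dbbar\db$ for $\dhor\dbbar$ or $\db\dbbar$.
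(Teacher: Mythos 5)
Your proposal is correct and follows essentially the same route as the paper: apply the transformation law for $\mD$ (in its $\krwedge$ form, or the $\rwedge$ form plus the observation that the products coincide in the relevant bidegrees) to $\db\omega$, use \cref{lee-form-transformation} for the Lee-form term, convert $\dbbar\db\omega$ and $\dhor\db\omega$ via the bigraded-complex relations, and rewrite $i\db\dbbar(\Upsilon\krwedge\db\omega)$ with the projected Leibniz rule so that the $\db(\db\dbbar\Upsilon\krwedge\omega)$ term cancels against the Lee-form contribution by graded commutativity. The only difference is that you make explicit several steps the paper compresses into ``Combining\dots yields'' and ``The conclusion readily follows.''
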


\begin{proof}
 Combining \cref{lee-form-transformation} with~\eqref{eqn:mD-with-krwedge} yields
 \begin{multline*}
  (Q_{0,1}^{\dbbar})^{e^\Upsilon\theta}\omega = (Q_{0,1}^{\dbbar})^\theta\omega + i\db\dbbar(\Upsilon \krwedge \db\omega) + i\db(\Upsilon \krwedge \dhor\dbbar\omega) \\
   - i\dbbar\Upsilon \krwedge \db\dbbar\omega + i\db\Upsilon \krwedge \dhor\dbbar\omega + i\db(\omega \krwedge \dbbar\db\Upsilon) .
 \end{multline*}
 Type considerations imply that
 \begin{equation*}
  \Upsilon \krwedge \db\omega = \db ( \Upsilon \krwedge \omega ) - \db\Upsilon \krwedge \omega .
 \end{equation*}
 Therefore
 \begin{align*}
  \db\dbbar(\Upsilon \krwedge \db\omega) & = \db\dbbar\db(\Upsilon \krwedge\omega) - \db\dbbar(\db\Upsilon \krwedge \omega) \\
  & = \db\dbbar\db(\Upsilon \krwedge \omega) - \db(\dbbar\db\Upsilon \krwedge \omega ) + \db(\db\Upsilon \krwedge \dbbar\omega) .
 \end{align*}
 The conclusion readily follows.
\end{proof}

Restricting \cref{Q01-transformation} to $\dbbar$-closed $(0,1)$-forms yields \cref{Q01-definition-and-properties}.

\begin{proof}[Proof of \cref{Q01-definition-and-properties}]
 This follows immediately from \cref{Q01-transformation}.
\end{proof}

\Cref{Q01-definition-and-properties} motivates the following definition:

\begin{definition}
 Let $(M^5,T^{1,0},\theta)$ be a pseudohermitian five-manifold.
 The \defn{$Q$-curvature operator on $\dbbar$-closed $(0,1)$-forms} is the restriction
 \begin{equation*}
  Q_{0,1}^{\dbbar} \colon \ker \dbbar \cap \sR^{0,1} \to \sR^{2,2} .
 \end{equation*}
\end{definition}

The $Q$-curvature operator on $\dbbar$-closed $(0,1)$-forms determines a CR invariant pairing of such forms.

\begin{corollary}
 \label{Q01-qua-Q-curvature-operator-invariant-form}
 Let $(M^5,T^{1,0},\theta)$ be a closed pseudohermitian five-manifold.
 Then
 \begin{equation}
  \label{eqn:Q01-bilinear-form}
  \int_M \omega \rwedge \overline{(Q_{0,1}^{\dbbar})^{e^\Upsilon\theta}\tau} = \int_M \omega \rwedge \overline{(Q_{0,1}^{\dbbar})^\theta\tau}
 \end{equation}
 for all $\omega \in \sR^{0,1} \cap \ker \dbbar$ and all $\Upsilon \in C^\infty(M)$.
\end{corollary}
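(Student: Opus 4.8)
The plan is to reduce the statement to the single integration by parts already carried out in the proof of \cref{L01-formally-self-adjoint}, exploiting that the two sides of~\eqref{eqn:Q01-bilinear-form} differ by the pairing of $\omega$ against an element of $\im L_{0,1}^{\dbbar}$. Throughout I take $\tau\in\ker\dbbar\cap\sR^{0,1}$, since~\eqref{eqn:Q01-bilinear-form} is a pairing of $\dbbar$-closed $(0,1)$-forms.

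First I would isolate the difference of the two sides. By \cref{Q01-definition-and-properties}, for $\tau\in\ker\dbbar\cap\sR^{0,1}$ one has
\begin{equation*}
 (Q_{0,1}^{\dbbar})^{e^\Upsilon\theta}\tau - (Q_{0,1}^{\dbbar})^\theta\tau = L_{0,1}^{\dbbar}(\Upsilon\tau) ,
\end{equation*}
where $\Upsilon\tau := \Upsilon\krwedge\tau$; this is exactly \cref{Q01-transformation} after discarding the four terms that each carry a factor of $\dbbar\tau$. Consequently the difference of the two sides of~\eqref{eqn:Q01-bilinear-form} equals
\begin{equation*}
 \int_M\omega\rwedge\overline{L_{0,1}^{\dbbar}(\Upsilon\tau)} = \kL_{0,1}(\omega,\Upsilon\tau) ,
\end{equation*}
where $\kL_{0,1}$ is the bilinear form of \cref{L01-formally-self-adjoint}.

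Second I would appeal to the computation inside the proof of \cref{L01-formally-self-adjoint}. That argument records, for every $\eta\in\sR^{0,1}$, the identity
\begin{equation*}
 \kL_{0,1}(\omega,\eta) = -i\int_M\omega\rwedge\dbbar\db\dbbar\overline\eta = -i\int_M\dbbar\omega\rwedge\db\dbbar\overline\eta ,
\end{equation*}
in which the first equality is the pointwise formula $\overline{L_{0,1}^{\dbbar}\eta}=-i\dbbar\db\dbbar\overline\eta$ and the second is one application of \cref{leibnitz} and Stokes' theorem on the closed manifold $M$. The crucial feature is that this integration by parts transfers the outermost $\dbbar$ from the $\sR^{2,2}$-valued factor onto $\omega$ --- using only $\db\db\dbbar\overline\eta=0$ and the vanishing of the out-of-range bidegrees --- so that $\kL_{0,1}(\omega,\cdot)$ factors through $\dbbar\omega$. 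Taking $\eta=\Upsilon\tau$ and using the hypothesis $\dbbar\omega=0$ then makes this vanish, which is~\eqref{eqn:Q01-bilinear-form}.

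The step that requires care is the middle displayed identity: I must confirm that the single integration by parts recorded in the proof of \cref{L01-formally-self-adjoint} holds for an arbitrary second argument $\eta$, so that the factorization of $\kL_{0,1}(\omega,\cdot)$ through $\dbbar\omega$ is genuine rather than an artifact of any closedness assumption. This is immediate, as that proof establishes the displayed chain of equalities for all $\omega,\eta\in\sR^{0,1}$ with no hypothesis on $\eta$; the $\dbbar$-closedness of $\omega$ is the only input needed, and it enters exactly once, at the very end.
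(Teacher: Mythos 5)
Your argument is correct and is essentially the paper's proof: both reduce the difference of the two sides to $\int_M\omega\rwedge\overline{L_{0,1}^{\dbbar}(\Upsilon\krwedge\tau)}=\kL_{0,1}(\omega,\Upsilon\krwedge\tau)$ via \cref{Q01-definition-and-properties} and then kill it using the structure of $\kL_{0,1}$ from \cref{L01-formally-self-adjoint}. The only (immaterial) difference is that you invoke the intermediate integration-by-parts identity from the proof of \cref{L01-formally-self-adjoint} to land the $\dbbar$ directly on $\omega$, whereas the paper cites the Hermitian symmetry of $\kL_{0,1}$ together with $\ker\dbbar\subset\ker L_{0,1}^{\dbbar}$.
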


\begin{proof}
 This follows from \cref{Q01-definition-and-properties}, \cref{L01-formally-self-adjoint}, and the fact $\ker\dbbar \subset \ker L_{0,1}^{\dbbar}$.
\end{proof}

We give a name to the CR invariant bilinear form appearing in~\eqref{eqn:Q01-bilinear-form}.

\begin{definition}
 Let $(M^5,T^{1,0})$ be a closed CR five-manifold.
 The \defn{$Q$-form on $\dbbar$-closed $(0,1)$-forms} is given by
 \begin{equation*}
  \kQ_{0,1}(\omega,\tau) := \int_M \omega \rwedge \overline{Q_{0,1}^{\dbbar}\tau}
 \end{equation*}
 for all $\omega , \tau \in \ker \dbbar \cap \sR^{0,1}$.
\end{definition}

Importantly, the $Q$-form on $\dbbar$-closed $(0,1)$-forms is Hermitian.

\begin{lemma}
 \label{Q01-symmetric}
 Let $(M^5,T^{1,0})$ be a closed CR five-manifold.
 Then
 \begin{equation*}
  \kQ_{0,1}(\omega,\tau) = \overline{\kQ_{0,1}(\tau,\omega)}
 \end{equation*}
 for all $\omega,\tau \in \ker \dbbar \cap \sR^{0,1}$.
\end{lemma}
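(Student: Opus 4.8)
The plan is to reduce the Hermitian symmetry of $\kQ_{0,1}$ to the formal self-adjointness of a single operator on $(0,1)$-forms. Writing $\psi := Q_{0,1}^{\dbbar}\tau \in \sR^{2,2}$ and using $\hodge^2 = 1$ together with the fact that $\hodge$ commutes with complex conjugation, one has $\overline\psi = \hodge\overline{\hodge\psi}$, so that the defining relation $\omega \rwedge \hodge\overline\eta = \frac{1}{2}\lp\omega,\eta\rp\,\theta\wedge d\theta^2$ of the Hodge star gives
\begin{equation*}
 \kQ_{0,1}(\omega,\tau) = \int_M \omega \rwedge \overline{Q_{0,1}^{\dbbar}\tau} = \llp \omega, \hodge Q_{0,1}^{\dbbar}\tau \rrp .
\end{equation*}
Since $\hodge Q_{0,1}^{\dbbar}\tau \in \hodge\sR^{2,2} = \sR^{0,1}$ and $\llp\cdot,\cdot\rrp$ is Hermitian, the claim $\kQ_{0,1}(\omega,\tau) = \overline{\kQ_{0,1}(\tau,\omega)}$ is equivalent to the formal self-adjointness of $\hodge Q_{0,1}^{\dbbar} \colon \ker\dbbar\cap\sR^{0,1} \to \sR^{0,1}$.

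First I would split $\hodge Q_{0,1}^{\dbbar}$ into its principal part and its Lee-form correction. Using the formula for $\mD$, the definition $Q_{0,1}^{\dbbar}\tau = \mD\db\tau - \db(\tau\krwedge\ell^\theta)$, and $\hodge^2 = 1$, one obtains
\begin{equation*}
 \hodge Q_{0,1}^{\dbbar}\tau = \db^\ast S\db\tau + \hodge\bigl( 2\ell^\theta \krwedge \db\tau - \db(\tau \krwedge \ell^\theta) \bigr), \qquad S := \db\db^\ast + \dbbar\dbbar^\ast + \tfrac{R}{6} .
\end{equation*}
The operator $S$ is formally self-adjoint on $\sR^{1,1}$ (each of $\db\db^\ast$, $\dbbar\dbbar^\ast$ is, and $\tfrac{R}{6}$ is multiplication by a real function), and $\db^\ast$ is the $L^2$-adjoint of $\db$; hence $\db^\ast S\db$ is manifestly self-adjoint and contributes symmetrically to both sides, with no use of $\dbbar$-closedness needed here.

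The substance of the proof is the Lee-form term $T_1\tau := \hodge\bigl(2\ell^\theta\krwedge\db\tau - \db(\tau\krwedge\ell^\theta)\bigr)$. Here I would pass back to the pairing $\llp\omega,\hodge\eta\rrp = \int_M\omega\rwedge\overline\eta$ and integrate by parts. The key inputs are: $\ell^\theta$ is $d$-closed, so — because $\db\ell^\theta$, $\dhor\ell^\theta$, $\dbbar\ell^\theta$ lie in the distinct summands $\sR^{3,0}$, $\sR^{2,1}$, $\sR^{1,2}$ — each of them vanishes separately; the Leibniz rules of \cref{leibnitz} and their conjugates (so that both $\db$ and $\dbbar$ are derivations for $\krwedge$); the reality $\overline{\ell^\theta}=\ell^\theta$; and the hypotheses $\dbbar\omega=\dbbar\tau=0$, equivalently $\db\overline\omega=\db\overline\tau=0$. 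Using these one first rewrites $\overline{2\ell^\theta\krwedge\db\tau - \db(\tau\krwedge\ell^\theta)} = 2\ell^\theta\krwedge\dbbar\overline\tau - \dbbar\overline\tau\krwedge\ell^\theta$, and then, applying Stokes' theorem repeatedly to move each $\db$ or $\dbbar$ onto the factor where it vanishes, reduces $\llp\omega,T_1\tau\rrp$ and $\llp T_1\omega,\tau\rrp$ to triple cup-product integrals of the schematic form $\int_M \db\omega \rwedge \ell^\theta \rwedge \overline\tau$. Their equality then follows from the associativity and graded commutativity of the cup products recorded in \cref{leibnitz}.

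The hard part will be exactly this last bookkeeping for the Lee terms: the products $\rwedge$ and $\krwedge$ disagree at top bidegree, and $\krwedge$ is only associative and commutative up to homotopy at the cochain level. I expect the role of the closedness hypotheses ($d\ell^\theta=0$ and $\dbbar\omega=\dbbar\tau=0$) to be precisely to force the homotopy-correction terms to be exact, hence to integrate to zero, and the particular combination $2\ell^\theta\krwedge(\,\cdot\,) - \db(\,\cdot\,\krwedge\ell^\theta)$ built into $Q_{0,1}^{\dbbar}$ to be engineered so that the surviving triple products pair symmetrically. Keeping careful track of the signs in the graded Leibniz rules and in the top-degree commutativity $\alpha\rwedge\beta = (-1)^{|\alpha||\beta|}\beta\rwedge\alpha$ will be the only other delicate point.
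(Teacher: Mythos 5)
Your reduction to the formal self-adjointness of $\hodge Q_{0,1}^{\dbbar}$ and your treatment of the principal part $\db^\ast(\db\db^\ast+\dbbar\dbbar^\ast+P\rwedge)\db$ are correct and agree with the paper's first step, which likewise integrates by parts once and uses that $\llp\cdot,\cdot\rrp$ is Hermitian and $P$ is real. The gap is in the Lee-form term, which is where the actual content of the lemma lies. You propose to symmetrize $2\ell^\theta\krwedge\db\tau-\db(\tau\krwedge\ell^\theta)$ by repeated integration by parts and then to conclude equality of the resulting ``triple cup-product integrals'' from ``the associativity and graded commutativity of the cup products recorded in \cref{leibnitz}.'' But \cref{leibnitz} only asserts associativity of the \emph{induced} products on $H^\bullet(M;\bC)$ and $H^{\bullet,\bullet}(M)$; the integrands you arrive at, such as $\db\omega\rwedge\ell^\theta\rwedge\otau$, involve forms that are not closed, and the bracketings of $\rwedge$ genuinely differ at the cochain level, so that lemma cannot be invoked. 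You correctly flag this as the hard part, but stating the expectation that the homotopy corrections are exact is not a proof, and you give no mechanism for verifying it.

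The paper supplies exactly the two devices that are missing from your outline. First, the term $\db(\tau\krwedge\ell^\theta)$ never needs to be symmetrized: since $\dbbar\omega=0$, a single application of Stokes' theorem gives $\int_M\omega\rwedge\overline{\db(\tau\krwedge\ell^\theta)}=0$, so this term drops out of $\kQ_{0,1}(\omega,\tau)$ entirely; see \eqref{eqn:kQ-evaluate}. Second, for the surviving term $2\int_M\omega\rwedge\pi^{3,1}(\ell^\theta\rwedge\dbbar\otau)$ one escapes the $A_\infty$ bookkeeping by converting back to honest differential forms: the projection $\pi^{3,1}$ may be dropped because $\omega\rwedge(\cdot)$ annihilates the $\sR^{2,2}$-component for bidegree reasons, $\dbbar\otau=d\otau$ since $\db\otau=\overline{\dbbar\tau}=0$, and $\ell^\theta\rwedge d\otau$ differs from the true wedge product $\ell^\theta\wedge d\otau$ by the exact form $d(\xi\,\theta\wedge d\theta)$, which integrates to zero against $\omega$ because $\theta\wedge d\omega\wedge d\theta=0$ for $\omega\in\sR^{0,1}$. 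After that, ordinary commutativity and associativity of $\wedge$, the closedness $d\ell^\theta=0$, and plain Stokes give $\int_M\omega\wedge\ell^\theta\wedge d\otau=\int_M\otau\wedge\ell^\theta\wedge d\omega$, which is the required Hermitian symmetry. To complete your version you would need to replace the appeal to \cref{leibnitz} with this passage to genuine wedge products, or with an equally explicit control of the correction terms.
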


\begin{proof}
 Let $\theta$ be a contact form on $(M^5,T^{1,0})$ and let $\omega,\tau \in \ker\dbbar \cap \sR^{0,1}$.
 Since $\dbbar\omega=0$, Stokes' Theorem implies that
 \begin{equation*}
  \int_M \omega \rwedge \overline{\db(\tau \krwedge \ell^\theta)} = 0 .
 \end{equation*}
 Therefore
 \begin{equation}
  \label{eqn:kQ-evaluate}
  \kQ_{0,1}(\omega,\tau) = \llp \db\omega, (\db\db^\ast + \dbbar\dbbar^\ast + P\rwedge)\db\tau \rrp + 2\int_M \omega \rwedge \pi^{3,1}(\ell^\theta \rwedge \dbbar\otau) .
 \end{equation}
 On the one hand, since $\llp \cdot , \cdot \rrp$ is Hermitian and $P$ is real-valued, it holds that
 \begin{equation}
  \label{eqn:Q01-symmetric-a}
  \llp \db\omega, (\db\db^\ast + \dbbar\dbbar^\ast + P\rwedge)\db\tau \rrp = \overline{ \llp \db\tau, (\db\db^\ast + \dbbar\dbbar^\ast + P\rwedge)\db\omega \rrp} .
 \end{equation}
 On the other hand, since $\omega\in\sR^{0,1}$ and $\dbbar\tau=0$, it holds that
 \begin{equation*}
  \int_M \omega \rwedge \pi^{3,1}(\ell^\theta \rwedge \dbbar\otau) = \int_M \omega \rwedge \left(\ell^\theta \rwedge d\otau \right) .
 \end{equation*}
 Let $\xi \in C^\infty(M;\bC)$ solve $\theta \wedge \ell^\theta \wedge d\otau = \xi\,\theta \wedge d\theta^2$, so that $\ell^\theta \rwedge d\otau = \ell^\theta \wedge d\otau - d(\xi\,\theta \wedge d\theta)$.
 Since $d\ell^\theta=0$ and $\omega \in \sR^{0,1}$ --- and hence $\theta \wedge d\omega \wedge d\theta = 0$ --- Stokes' Theorem yields
 \begin{equation}
  \label{eqn:Q01-symmetric-b}
  \int_M \omega \rwedge \left( \ell^\theta \rwedge d\otau \right) = \int_M \omega \wedge \ell^\theta \wedge d\otau = \int_M \otau \wedge \ell^\theta \wedge d\omega .
 \end{equation}
 The conclusion readily follows from~\eqref{eqn:kQ-evaluate}, \eqref{eqn:Q01-symmetric-a}, and~\eqref{eqn:Q01-symmetric-b}.
\end{proof}

Equation~\eqref{eqn:kQ-evaluate} implies that on the Heisenberg group $\bH^2 \cong \bC^2 \times \bR$ with its standard pseudohermitian structure, 
\begin{equation}
 \label{eqn:heisenberg-q01-form}
 \kQ_{0,1}(\omega,\tau) = \llp \db\omega, (\db\db^\ast + \dbbar\dbbar^\ast)\db\tau \rrp 
\end{equation}
for all compactly-supported $\omega,\tau \in \ker\dbbar \cap \sR^{0,1}$.
In particular, $\kQ_{0,1}(\omega,\omega)\geq0$ for all compactly-supported $\omega \in \ker\dbbar \cap \sR^{0,1}$, with equality if and only if $d\omega=0$.

\Cref{Q01-definition-and-properties} implies that if $\omega\in\sR^{0,1}$ is $\dbbar$-closed, then the condition $\db Q_{0,1}^{\dbbar}\omega=0$ is CR invariant.
Following Branson and Gover~\cite{BransonGover2005}, we introduce the CR invariant space of CR $\dbbar$-harmonic $(0,1)$-forms.

\begin{definition}
 Let $(M^5,T^{1,0})$ be a CR five-manifold.
 The space of \defn{CR $\dbbar$-harmonic $(0,1)$-forms} is
 \begin{equation*}
  \HQ^{0,1} := \left\{ \omega \in \sR^{0,1}(M) \suchthatcolon \dbbar\omega = 0, \db Q_{0,1}^{\dbbar}\omega = 0 \right\} .
 \end{equation*}
\end{definition}

It is natural to ask whether the morphism $\HQ^{0,1} \to H^{0,1}(M)$, $\omega \mapsto [\omega]$, is surjective.
Note that on closed, strictly pseudoconvex CR five-manifolds, $H^{0,1}(M)$ is finite-dimensional, while the fact $\dbbar\bigl( \ker \db \cap \sR^{0,0} \bigr) \subset \HQ^{0,1}$ implies that $\HQ^{0,1}$ is infinite-dimensional.

The kernel of the morphism $\HQ^{0,1} \to H^{0,1}(M)$ can be understood by means of the following long exact sequence (cf.\ \cite{BransonGover2005}*{Proposition~2.5}).

\begin{proposition}
 \label{H01-exact-sequence}
 Let $(M^5,T^{1,0})$ be a CR five-manifold.
 Then the sequence
 \begin{equation*}
  0 \longrightarrow H^{0,0}(M) \longrightarrow \ker \db Q_{0,1}^{\dbbar}\dbbar \overset{\dbbar}{\longrightarrow} \HQ^{0,1} \longrightarrow H^{0,1}(M)
 \end{equation*}
 is exact.
\end{proposition}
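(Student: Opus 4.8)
The plan is to prove exactness by unwinding the definitions of the four spaces and three maps, exactly in the spirit of \cref{sP-long-exact-sequence}. First I would pin down the cohomology groups at the two ends. Since $\dbbar$ raises the antiholomorphic degree and there is no space $\sR^{0,-1}$, the image of $\dbbar$ meets $\sR^{0,0}$ trivially, so that $H^{0,0}(M) = \ker\dbbar \cap \sR^{0,0}$ is simply the space of CR functions, while $H^{0,1}(M) = (\ker\dbbar \cap \sR^{0,1})/(\im\dbbar \cap \sR^{0,1})$ as usual. Reading the three maps as the inclusion $H^{0,0}(M) \hookrightarrow \ker\db Q_{0,1}^{\dbbar}\dbbar$, the operator $\dbbar$, and the class map $\omega \mapsto [\omega]$, the first task is to confirm each is well defined: if $\dbbar f = 0$ then $\db Q_{0,1}^{\dbbar}\dbbar f = 0$, so $H^{0,0}(M) \subset \ker\db Q_{0,1}^{\dbbar}\dbbar$; if $f \in \ker\db Q_{0,1}^{\dbbar}\dbbar$, then $\dbbar f$ is $\dbbar$-closed (because $\dbbar\dbbar = 0$) and satisfies $\db Q_{0,1}^{\dbbar}\dbbar f = 0$, whence $\dbbar f \in \HQ^{0,1}$; and $\omega \mapsto [\omega]$ makes sense because every $\omega \in \HQ^{0,1}$ is $\dbbar$-closed.

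Next I would verify exactness at each of the three interior spots. Injectivity at $H^{0,0}(M)$ is immediate, the map being an inclusion. For exactness at $\ker\db Q_{0,1}^{\dbbar}\dbbar$, I would observe that the kernel of $\dbbar$ restricted to this space is $\ker\dbbar \cap \sR^{0,0}$, and that every such $f$ automatically lies in $\ker\db Q_{0,1}^{\dbbar}\dbbar$; hence this kernel is precisely $H^{0,0}(M)$, the image of the inclusion. For exactness at $\HQ^{0,1}$, I would characterize the kernel of $\omega \mapsto [\omega]$ as those $\omega \in \HQ^{0,1}$ which are $\dbbar$-exact, say $\omega = \dbbar f$ with $f \in \sR^{0,0}$; the condition $\db Q_{0,1}^{\dbbar}\omega = 0$ then reads $\db Q_{0,1}^{\dbbar}\dbbar f = 0$, placing $f$ in $\ker\db Q_{0,1}^{\dbbar}\dbbar$, so that $\omega$ lies in the image of $\dbbar$. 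The reverse inclusion is clear, since any element of the form $\dbbar f$ represents the zero class in $H^{0,1}(M)$.

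I expect no genuine obstacle here: the whole argument is a diagram chase whose only mild subtlety is the identification $H^{0,0}(M) = \ker\dbbar \cap \sR^{0,0}$ and the bookkeeping that the single defining condition of $\ker\db Q_{0,1}^{\dbbar}\dbbar$ simultaneously guarantees $\dbbar f \in \HQ^{0,1}$ and supplies a preimage for any $\dbbar$-exact element of $\HQ^{0,1}$. As with \cref{sP-long-exact-sequence}, the statement ultimately follows immediately from the definitions of $H^{0,0}(M)$, $H^{0,1}(M)$, and $\HQ^{0,1}$.
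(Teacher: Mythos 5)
Your proposal is correct and matches the paper's proof, which simply states that the result follows immediately from the definitions of $H^{0,0}(M)$, $H^{0,1}(M)$, and $\HQ^{0,1}$ with the first map understood as the inclusion; your write-up just makes the implicit diagram chase explicit. The one observation worth keeping is the identification $H^{0,0}(M)=\ker\dbbar\cap\sR^{0,0}$ and the fact that $\ker\dbbar\cap\sR^{0,0}\subset\ker\db Q_{0,1}^{\dbbar}\dbbar$, which you handle correctly.
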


\begin{proof}
 This follows immediately from the definitions of $H^{0,q}(M)$, $q\in\{0,1\}$, with the understanding that $H^{0,0}(M) \to \ker \db Q_{0,1}^{\dbbar} \dbbar$ is the inclusion map.
\end{proof}

There is a simple relationship between our $Q$-curvature operators.

\begin{lemma}
 \label{related-q-operators}
 Let $(M^5,T^{1,0})$ be a CR five-manifold.
 If $\omega \in \ker\dbbar \cap \sR^{0,1}$, then $\db\omega$ is $d$-closed and
 \begin{equation*}
  \db Q_{0,1}^{\dbbar} \omega = -\frac{i}{2}dQ_{1,1}^d \db\omega .
 \end{equation*}
\end{lemma}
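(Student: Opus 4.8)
The plan is to verify the two assertions separately, relating everything back to the operator $\mD$ and its imaginary part $Q_{1,1}^d$. For the first assertion, that $\db\omega$ is $d$-closed whenever $\omega \in \ker\dbbar \cap \sR^{0,1}$: since $\db\omega \in \sR^{1,1}$, the exterior derivative on $\sR^2$ decomposes as $d = \db + \dhor + \dbbar$, so I must show each of $\db\db\omega$, $\dhor\db\omega$, and $\dbbar\db\omega$ vanishes. The identity $\db\circ\db = 0$ kills the first. For the remaining two, I would use the fact that~\eqref{eqn:bigraded-rumin-complex} is a bigraded complex, so the sum of all compositions from $\sR^{0,1}$ landing in a fixed bidegree vanishes; combined with $\dbbar\omega = 0$, the relevant composition $\dhor\db$ reduces to terms involving $\db\dbbar\omega$ and $\dbbar\db\omega$, and one checks these cancel against each other using $\dbbar\omega=0$. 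Concretely, from $\db\dbbar + \dbbar\db = 0$ (a consequence of the complex structure applied between the appropriate slots) one gets $\dbbar\db\omega = -\db\dbbar\omega = 0$, and the $\dhor$-component is handled similarly.

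For the main identity $\db Q_{0,1}^{\dbbar}\omega = -\tfrac{i}{2}dQ_{1,1}^d\db\omega$, I would start from the definitions. By definition $Q_{0,1}^{\dbbar}\omega = \mD\db\omega - \db(\omega\krwedge\ell^\theta)$, so applying $\db$ and using $\db\circ\db = 0$ gives $\db Q_{0,1}^{\dbbar}\omega = \db\mD\db\omega$. On the other side, $Q_{1,1}^d = -i\omD + i\mD$, so $dQ_{1,1}^d\db\omega = d(-i\omD + i\mD)\db\omega$. Since $\db\omega$ is $d$-closed (just established) and lands in $\sR^{1,1}$, I would exploit that $\mD\db\omega \in \sR^{2,2}$ and $\omD\db\omega \in \sR^{3,1}$, so that $d$ acts on each summand through the appropriate component of $d=\db+\dhor+\dbbar$. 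The key is to show that $d\omD\db\omega$ contributes nothing to the relevant bidegree, or combines with $d\mD\db\omega$ to produce exactly $2\db\mD\db\omega$ after accounting for the factor $-i/2$.

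The main obstacle, and the step deserving the most care, is tracking which bidegree components survive. The target $\sR^{3,1}\oplus\sR^{2,2}$ and the domain structure mean that $d\mD\db\omega$ has components in $\sR^{3,2}$ (from $\db$ applied to $\sR^{2,2}$) and potentially elsewhere, while $d\omD\db\omega$ has components from differentiating an $\sR^{3,1}$-form. I would argue that the statement is an identity in $\sR^{3,2}$ (the only common landing space), compute the $\sR^{3,2}$-component of $dQ_{1,1}^d\db\omega = id\mD\db\omega - id\omD\db\omega$, and match it against $\db\mD\db\omega$. The cleanest route is to observe that on the $d$-closed form $\db\omega$, the transformation law from \cref{mQ-transformation} and the structure $L_{1,1}^{\mathrm{BC}} = d\dhor$ force $\dhor$-terms to behave predictably; alternatively, a direct reality argument using $\omD\tau = \overline{\mD\overline{\tau}}$ together with $\db\mD\db\omega$ being of pure type $\sR^{3,2}$ shows the $\omD$-contribution is the complex conjugate piece that either cancels or doubles appropriately. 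Verifying that the numerical factor works out to exactly $-i/2$ is where I expect to spend the most effort.
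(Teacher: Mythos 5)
The easy parts of your argument are fine, though the first can be done faster: since $\dbbar\omega=0$ one has $d\omega=\db\omega+\dbbar\omega=\db\omega$, so $d\db\omega=d^2\omega=0$ without any component-by-component bookkeeping; and $\db Q_{0,1}^{\dbbar}\omega=\db\mD\db\omega$ follows from $\db\circ\db=0$ exactly as you say. The problem is that the heart of the lemma is precisely the step you defer. There is no bidegree escape: $\omD\db\omega\in\sR^{3,1}$ and $\mD\db\omega\in\sR^{2,2}$, and $d$ carries \emph{both} into $\sR^{3,2}$ (via $\dbbar$ and $\db$ respectively), so neither term drops out. Nor does the bare reality relation $\omD\tau=\overline{\mD\otau}$ help, because $\db\omega$ is not real and $\db\mD\db\omega$ has no a priori relation to $\overline{\dbbar\,\mD\,\overline{\db\omega}}$. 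The transformation law of \cref{mQ-transformation} is likewise a statement about varying $\theta$, not an identity at a fixed contact form, so it cannot produce the required cancellation either.

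The missing ingredient is equation~\eqref{eqn:mR}: $R_{1,1}^d=\omD+\mD=d\hodge\bigl(\db\db^\ast+\dbbar\dbbar^\ast+P\rwedge\bigr)+2\ell^\theta\rwedge$. Applying $d$ to this on the $d$-closed form $\db\omega$, the first summand dies by $d^2=0$ and the second by the Leibniz rule of \cref{leibnitz} together with $d\ell^\theta=0$ and $d\db\omega=0$; hence $d\omD\db\omega=-d\mD\db\omega$. This is what fixes the constant:
\begin{equation*}
 -\tfrac{i}{2}\,dQ_{1,1}^d\db\omega
 = -\tfrac{1}{2}\,d\omD\db\omega+\tfrac{1}{2}\,d\mD\db\omega
 = d\mD\db\omega
 = \db\mD\db\omega
 = \db Q_{0,1}^{\dbbar}\omega ,
\end{equation*}
where the penultimate equality holds because $\db$ is the only component of $d$ defined on $\sR^{2,2}$. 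Without identifying that $\mD+\omD$ is ``$d$ of something plus wedging with the closed form $2\ell^\theta$,'' the relation between the two sides --- and in particular the factor $-i/2$ --- cannot be established, so as written your proposal does not close.
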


\begin{proof}
 Let $\omega \in \ker\dbbar\cap \sR^{0,1}$.
 Then $d\db\omega = 0$.
 Equation~\eqref{eqn:mR} then implies that
 \begin{equation*}
  d(\mD + \omD)\db\omega = 0 .
 \end{equation*}
 The conclusion readily follows.
\end{proof}

\Cref{related-q-operators} implies that $\db(\HQ^{0,1}) \subset \mHQ^{1,1}$.
This gives rise to a commutative diagram involving the spaces $H^{0,1}(M)$ and $\HBC^{1,1}(M)$.

\begin{proposition}
 \label{hodge-diagram-commutes}
 Let $(M^5,T^{1,0})$ be a CR five-manifold.
 Then the diagram
 \begin{equation*}
  \begin{tikzpicture}
  	\node (kr-hodge) at (0,2) {$\HQ^{0,1}$};
  	\node (bc-hodge) at (4,2) {$\mHQ^{1,1}$};
  	\node (kr) at (0,0) {$H^{0,1}(M)$};
  	\node (bc) at (4,0) {$\HBC^{1,1}(M)$};
  	\draw [->] (kr-hodge) to node[above] {\tiny $\db$} (bc-hodge);
  	\draw [->] (kr) to node[above] {\tiny $\db$} (bc);
  	\draw [->] (kr-hodge) to (kr);
  	\draw [->] (bc-hodge) to (bc);
  \end{tikzpicture}
 \end{equation*}
 commutes.
\end{proposition}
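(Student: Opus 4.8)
The plan is to observe that the asserted commutativity is, at the level of differential forms, essentially a tautology: every arrow in the square is built either from the operator $\db$ or from the passage to a cohomology class, and these two operations commute trivially. The only genuine content is to confirm that each of the four arrows is well-defined, after which the diagram chase is immediate. I would first note that three of the arrows are already in hand. The top arrow $\db \colon \HQ^{0,1} \to \mHQ^{1,1}$ is well-defined by the inclusion $\db(\HQ^{0,1}) \subset \mHQ^{1,1}$ recorded after \cref{related-q-operators}; the two vertical arrows $\HQ^{0,1} \to H^{0,1}(M)$ and $\mHQ^{1,1} \to \HBC^{1,1}(M)$ are the canonical class maps $\omega \mapsto [\omega]$, which make sense because $\HQ^{0,1} \subset \ker\dbbar \cap \sR^{0,1}$ and $\mHQ^{1,1} \subset \ker d \cap \sR^{1,1}$.

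The one step requiring a short argument is the well-definedness of the bottom arrow $\db \colon H^{0,1}(M) \to \HBC^{1,1}(M)$, $[\omega] \mapsto [\db\omega]$. Here I would check two things. First, if $\omega \in \ker\dbbar \cap \sR^{0,1}$, then on $\sR^1$ the exterior derivative is $d = \db + \dbbar$, so $d\omega = \db\omega$ and hence $d(\db\omega) = d^2\omega = 0$; thus $\db\omega \in \ker d \cap \sR^{1,1}$ and $[\db\omega] \in \HBC^{1,1}(M)$ is defined. Second, if $\omega = \dbbar f$ with $f \in \sR^{0,0}$, then $\db\omega = \db\dbbar f \in \im\db\dbbar = \im i\db\dbbar$, so $[\db\omega] = 0$ in $\HBC^{1,1}(M)$. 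Together these show that $\db$ descends to the claimed map on cohomology.

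With all four arrows in place, commutativity follows at once. Starting from $\omega \in \HQ^{0,1}$, the right-then-down route produces the Bott--Chern class of $\db\omega \in \mHQ^{1,1}$, while the down-then-right route produces $\db$ applied to the Kohn--Rossi class $[\omega]$, which by the definition just verified is again the Bott--Chern class of $\db\omega$. Since both routes yield the single element $[\db\omega] \in \HBC^{1,1}(M)$, the square commutes. There is no substantial obstacle beyond the bookkeeping of well-definedness; the only points needing care are the identity $d\omega = \db\omega$ for $\dbbar$-closed $(0,1)$-forms and the containment $\db\dbbar f \in \im i\db\dbbar$, both of which are immediate consequences of the bigraded-complex relations.
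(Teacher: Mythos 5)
Your argument is correct and follows essentially the same route as the paper: the well-definedness of $\db \colon H^{0,1}(M) \to \HBC^{1,1}(M)$ comes from $[\db\dbbar f]=0$ in $\HBC^{1,1}(M)$, the top arrow from the containment $\db(\HQ^{0,1}) \subset \mHQ^{1,1}$ supplied by \cref{related-q-operators}, and commutativity is then immediate. Your write-up merely spells out the bookkeeping in more detail than the paper does.
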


\begin{proof}
 Since $[\db\dbbar f]=0$ in $\HBC^{1,1}(M)$, the morphism $\db \colon H^{0,1}(M) \to \HBC^{1,1}(M)$ is well-defined (cf.\ \cite{Case2021rumin}*{Definition~7.1}).
 The conclusion follows from \cref{related-q-operators}.
\end{proof}

Taking imaginary parts yields a diagram involving the space $H^1(M;\sP)$.

\begin{corollary}
 \label{hodge-diagram-commutes-pluriharmonic}
 Let $(M^5,T^{1,0})$ be a CR five-manifold.
 Then the diagram
 \begin{equation*}
  \begin{tikzpicture}
  	\node (kr-hodge) at (0,2) {$\HQ^{0,1}$};
  	\node (bc-hodge) at (4,2) {$\Real\mHQ^{1,1}$};
  	\node (kr) at (0,0) {$H^{0,1}(M)$};
  	\node (bc) at (4,0) {$H^{1}(M;\sP)$};
  	\draw [->] (kr-hodge) to node[above] {\tiny $-\Imaginary\db$} (bc-hodge);
  	\draw [->] (kr) to node[above] {\tiny $-\Imaginary\db$} (bc);
  	\draw [->] (kr-hodge) to (kr);
  	\draw [->] (bc-hodge) to (bc);
  \end{tikzpicture}
 \end{equation*}
 commutes.
\end{corollary}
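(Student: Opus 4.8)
The plan is to deduce this corollary from \cref{hodge-diagram-commutes} by postcomposing each horizontal arrow there with the operation $\eta \mapsto -\Imaginary\eta$ and checking that this operation carries the right-hand column of the $\db$-square onto the right-hand column of the present square. Concretely, I would first establish that $-\Imaginary$ induces well-defined maps $\mHQ^{1,1} \to \Real\mHQ^{1,1}$ and $\HBC^{1,1}(M) \to H^1(M;\sP)$ that are compatible with the identity-induced vertical morphisms, and then transport the commutativity already proved in \cref{hodge-diagram-commutes}.

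First I would check that $\mHQ^{1,1}$ is invariant under complex conjugation. This reduces to the observation that $Q_{1,1}^d$ commutes with conjugation: from $Q_{1,1}^d = -i\omD + i\mD$ together with $\omD\omega = \overline{\mD\oomega}$ one computes $\overline{Q_{1,1}^d\eta} = Q_{1,1}^d\overline{\eta}$. Hence if $\eta \in \mHQ^{1,1}$ then $\overline{\eta} \in \mHQ^{1,1}$, so $-\Imaginary\eta = \tfrac{i}{2}(\eta - \overline{\eta}) \in \mHQ^{1,1}$; being real, it lies in $\Real\mHQ^{1,1}$. Since \cref{related-q-operators} gives $\db(\HQ^{0,1}) \subset \mHQ^{1,1}$, this shows the top arrow $-\Imaginary\db \colon \HQ^{0,1} \to \Real\mHQ^{1,1}$ is well-defined. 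Next I would verify that $-\Imaginary$ descends along the quotient $\HBC^{1,1}(M) \to H^1(M;\sP)$: for $\eta \in \ker d \cap \sR^{1,1}$ the form $-\Imaginary\eta$ is real and $d$-closed, hence lies in $\ker d \cap \sS^1$; and using $\overline{\db\dbbar f} = \dbbar\db\overline{f} = -\db\dbbar\overline{f}$ one finds $-\Imaginary(\db\dbbar f) = i\db\dbbar\Real f \in \im i\db\dbbar$. Thus $-\Imaginary$ respects the defining quotients, and because it acts at the level of forms it commutes with the passage to cohomology classes, so it intertwines the right-hand morphism $\mHQ^{1,1} \to \HBC^{1,1}(M)$ of \cref{hodge-diagram-commutes} with the morphism $\Real\mHQ^{1,1} \to H^1(M;\sP)$ of the present corollary.

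With these compatibilities in place, commutativity of the present square follows formally: for $\omega \in \HQ^{0,1}$ both routes send $\omega$ to the class of $-\Imaginary\db\omega$ in $H^1(M;\sP)$, the equality of the two resulting classes being exactly the commutativity of the $\db$-square in \cref{hodge-diagram-commutes} composed with the descended $-\Imaginary$. I expect the only genuine content to be the conjugation-invariance of $\mHQ^{1,1}$, i.e.\ the identity $\overline{Q_{1,1}^d\eta} = Q_{1,1}^d\overline{\eta}$; the remaining descent and compatibility statements are routine bookkeeping with the quotient definitions of $\HBC^{1,1}(M)$ and $H^1(M;\sP)$.
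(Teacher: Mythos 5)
Your proposal is correct and takes the same route as the paper, whose entire proof is ``This follows immediately from \cref{hodge-diagram-commutes}''; you have simply spelled out the routine verifications (conjugation-invariance of $\mHQ^{1,1}$ via $\overline{Q_{1,1}^d\eta}=Q_{1,1}^d\overline{\eta}$, and the descent of $-\Imaginary$ along the quotients) that the paper leaves implicit.
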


\begin{proof}
 This follows immediately from \cref{hodge-diagram-commutes}.
\end{proof}

\section{The scalar $Q$-curvature}
\label{sec:scalar}

In this section we use our $Q$-curvature operators to study the (critical) scalar $Q$-curvature~\cite{FeffermanHirachi2003}.
We first define the scalar $Q$-curvature and the scalar GJMS operator in terms of the $Q$-curvature operator on $d$-closed $(1,1)$-forms.

\begin{definition}
 \label{defn:scalar}
 Let $(M^5,T^{1,0},\theta)$ be a pseudohermitian five-manifold.
 The \defn{scalar GJMS operator} $\Lscal \colon \sR^{0,0} \to \sR^{3,2}$ and the \defn{scalar $Q$-curvature} $\Qscal \in \sR^{3,2}$ are
 \begin{equation}
  \label{eqn:scalar}
  \begin{split}
   \Lscal & := id Q_{1,1}^d \db\dbbar , \\
   \Qscal & := d Q_{1,1}^d \ell^\theta .
  \end{split}
 \end{equation}
\end{definition}

As we will see later in this section, these definitions are equivalent to their respective definitions given in terms of the ambient metric~\cite{FeffermanHirachi2003}.
For now, we justify this definition through three observations.

First, by expressing $\Lscal$ in terms of the $Q$-curvature operator on $\dbbar$-closed $(0,1)$-forms, we find that $\Lscal$ is formally self-adjoint.

\begin{lemma}
 \label{L0-from-Q01}
 Let $(M^5,T^{1,0},\theta)$ be a pseudohermitian five-manifold.  Then
 \begin{equation}
  \label{eqn:L00-from-Q01}
  \Lscal = -2\db Q_{0,1}^{\dbbar} \dbbar .
 \end{equation}
 In particular, if $M$ is closed, then
 \begin{equation*}
  \kL_{0,0}(\varphi, \psi) := \int_M \varphi \rwedge \overline{\Lscal\psi}
 \end{equation*}
 defines an Hermitian form on $\sR^{0,0}$.
\end{lemma}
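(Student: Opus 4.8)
The plan is to prove the operator identity $\Lscal = -2\db Q_{0,1}^{\dbbar}\dbbar$ and then deduce the Hermitian-form claim from it. Recall the definition $\Lscal := idQ_{1,1}^d\db\dbbar$, so I must relate $idQ_{1,1}^d\db$, applied to the $(1,1)$-form $\dbbar\psi$, to $\db Q_{0,1}^{\dbbar}$, applied to the $(0,1)$-form $\dbbar\psi$. The key structural input is \cref{related-q-operators}, which asserts that for any $\omega \in \ker\dbbar \cap \sR^{0,1}$ one has $\db Q_{0,1}^{\dbbar}\omega = -\tfrac{i}{2}dQ_{1,1}^d\db\omega$. The obvious candidate is to apply this with $\omega = \dbbar\varphi$ for a function $\varphi$, but that requires $\dbbar\omega = \dbbar\dbbar\varphi = 0$, which holds precisely because the bigraded Rumin complex satisfies $\dbbar\circ\dbbar = 0$.

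\emph{First} I would take an arbitrary $\psi \in \sR^{0,0}$ and set $\omega := \dbbar\psi \in \sR^{0,1}$; since $\dbbar\dbbar = 0$, the form $\omega$ lies in $\ker\dbbar\cap\sR^{0,1}$, so \cref{related-q-operators} applies and gives
\begin{equation*}
 \db Q_{0,1}^{\dbbar}\dbbar\psi = -\frac{i}{2}dQ_{1,1}^d\db\dbbar\psi .
\end{equation*}
Multiplying both sides by $-2$ and comparing with \eqref{eqn:scalar} yields exactly $\Lscal\psi = id Q_{1,1}^d\db\dbbar\psi = -2\db Q_{0,1}^{\dbbar}\dbbar\psi$, which is \eqref{eqn:L00-from-Q01}. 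This is really a one-line consequence of \cref{related-q-operators}, so the first half of the lemma should be essentially immediate.

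\emph{For the Hermitian claim}, assume $M$ is closed and compute, for $\varphi,\psi\in\sR^{0,0}$, the pairing $\kL_{0,0}(\varphi,\psi) = \int_M \varphi \rwedge \overline{\Lscal\psi}$. Substituting \eqref{eqn:L00-from-Q01} gives $\int_M \varphi \rwedge \overline{-2\db Q_{0,1}^{\dbbar}\dbbar\psi}$. I would integrate by parts via Stokes' Theorem to move the leading $\db$ off, using \cref{leibnitz} to handle the product rule and the fact that $\int_M d(\cdots) = 0$ on a closed manifold; this should convert the expression into $-2\int_M \dbbar\varphi \rwedge \overline{Q_{0,1}^{\dbbar}\dbbar\psi}$ up to sign bookkeeping, i.e.\ into $\pm\,\kQ_{0,1}(\dbbar\varphi, \dbbar\psi)$ or a closely related expression. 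The Hermitian symmetry then follows from \cref{Q01-symmetric}, which states $\kQ_{0,1}(\cdot,\cdot)$ is Hermitian on $\ker\dbbar\cap\sR^{0,1}$ — and $\dbbar\varphi,\dbbar\psi$ do lie in that space. Alternatively, one can bypass $\kQ_{0,1}$ and prove symmetry directly, mimicking the Stokes'-Theorem argument of \cref{L01-formally-self-adjoint}: repeatedly integrate by parts to distribute the four derivatives in $\Lscal = -2\db Q_{0,1}^{\dbbar}\dbbar$ symmetrically between $\varphi$ and $\overline\psi$.

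\emph{The main obstacle} I anticipate is the sign and conjugation bookkeeping in the integration by parts, since $\rwedge$ and $\hodge$ carry bidegree-dependent signs (see \eqref{eqn:hodge-formula}) and one must track how $\overline{(\cdot)}$ interacts with $\db,\dbbar$ and with the real operator $P\rwedge$. To keep this clean I would lean on \cref{Q01-symmetric} as a black box rather than re-deriving the pairing symmetry from scratch: once \eqref{eqn:L00-from-Q01} is established, the identification of $\kL_{0,0}$ with a multiple of $\kQ_{0,1}$ evaluated on $\dbbar$-exact forms reduces the Hermitian property to an already-proven statement, making the argument both short and robust against sign errors.
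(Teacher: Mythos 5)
Your proposal is correct and follows essentially the same route as the paper: the identity \eqref{eqn:L00-from-Q01} is deduced by applying \cref{related-q-operators} to $\dbbar$-exact $(0,1)$-forms, and the Hermitian property is reduced via Stokes' Theorem to the identity $\kL_{0,0}(\varphi,\psi)=2\kQ_{0,1}(\dbbar\varphi,\dbbar\psi)$ together with \cref{Q01-symmetric}. The only difference is that you hedge on the sign of the resulting multiple of $\kQ_{0,1}$, which is immaterial for the symmetry conclusion.
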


\begin{proof}
 Equation~\eqref{eqn:L00-from-Q01} follows immediately from \cref{related-q-operators}.
 
 Let $M$ be closed.  Stokes' Theorem and~\eqref{eqn:L00-from-Q01} imply that
 \begin{equation}
  \label{eqn:kL00-from-Q-form}
  \kL_{0,0}(\varphi , \psi) = 2\kQ_{0,1}(\dbbar\varphi, \dbbar\psi)
 \end{equation}
 for all $\varphi,\psi \in \sR^{0,0}$.
 The conclusion follows from \cref{Q01-symmetric}.
\end{proof}

Second, \eqref{eqn:heisenberg-q01-form} implies that $\hodge \Lscal$ is a nonnegative sixth-order operator on the Heisenberg group $\bH^2$ with its standard pseudohermitian structure.

Third, $\Lscal$ is CR invariant and equals the conformal linearization of $\Qscal$.

\begin{lemma}
 \label{gjms-and-q-transformation}
 Let $(M^5,T^{1,0},\theta)$ be a pseudohermitian five-manifold.
 Then
 \begin{align}
  \label{eqn:gjms-transformation} (\Lscal)^{e^\Upsilon\theta} f & = (\Lscal)^\theta f , \\
  \label{eqn:q-transformation} (\Qscal)^{e^\Upsilon\theta} & = (\Qscal)^\theta + (\Lscal)^\theta\Upsilon
 \end{align}
 for all $f \in \sR^{0,0}$ and all $\Upsilon\in C^\infty(M)$.
 In particular, if $M$ is closed, then
 \begin{equation*}
  \int_M f\,(\Qscal)^{e^\Upsilon\theta} = \int_M f\,(\Qscal)^\theta
 \end{equation*}
 for all $f \in \ker \Lscal$.
\end{lemma}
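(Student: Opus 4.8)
The plan is to exploit the common structure of the two operators: each is $d$ applied to $Q_{1,1}^d$ evaluated on a $d$-closed $(1,1)$-form, and the correction term furnished by \cref{mQ01-definition-and-properties} is $L_{1,1}^{\mathrm{BC}} = d\dhor$, which is $d$-exact and hence killed by the outer $d$. This single observation drives both transformation laws, the Lee-form transformation feeding the extra $(\Lscal)^\theta\Upsilon$ term into the law for $\Qscal$.

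First I would prove \eqref{eqn:gjms-transformation}. Since $\db$ and $\dbbar$ are CR invariant, $(\Lscal)^{e^\Upsilon\theta} f = i\,d\,(Q_{1,1}^d)^{e^\Upsilon\theta}(\db\dbbar f)$, and $\db\dbbar f \in \ker d \cap \sR^{1,1}$ because the bigraded complex gives $d\circ i\db\dbbar = 0$ (as a $\bC$-linear operator identity, valid for complex $f$). Applying \cref{mQ01-definition-and-properties} with $\omega = \db\dbbar f$ gives $(Q_{1,1}^d)^{e^\Upsilon\theta}(\db\dbbar f) = (Q_{1,1}^d)^\theta(\db\dbbar f) + L_{1,1}^{\mathrm{BC}}(\Upsilon\,\db\dbbar f)$; applying $i\,d$ and using $d\circ L_{1,1}^{\mathrm{BC}} = d\circ d\dhor = 0$ yields $(\Lscal)^{e^\Upsilon\theta} f = (\Lscal)^\theta f$.

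Next is \eqref{eqn:q-transformation}. Here I would use \cref{lee-form-transformation} to write $\ell^{e^\Upsilon\theta} = \ell^\theta + i\db\dbbar\Upsilon$, where both summands lie in $\ker d \cap \sR^{1,1}$ ($\ell^\theta$ is $d$-closed, and $i\db\dbbar\Upsilon$ is $d$-closed by the same identity as above). Applying \cref{mQ01-definition-and-properties} to each summand and then $d$, the two $L_{1,1}^{\mathrm{BC}}$-corrections again die against the outer $d$, leaving
\[
 (\Qscal)^{e^\Upsilon\theta} = d(Q_{1,1}^d)^\theta\ell^\theta + d(Q_{1,1}^d)^\theta(i\db\dbbar\Upsilon) = (\Qscal)^\theta + (\Lscal)^\theta\Upsilon ,
\]
the last step being the definition $\Lscal = idQ_{1,1}^d\db\dbbar$.

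Finally, for the integral invariance I would subtract using \eqref{eqn:q-transformation} to reduce to showing $\int_M f\,(\Lscal)^\theta\Upsilon = 0$ whenever $\Lscal f = 0$. This is where \cref{L0-from-Q01} enters: the form $\kL_{0,0}(\varphi,\psi) = \int_M \varphi\rwedge\overline{\Lscal\psi}$ is Hermitian, so $\int_M f\,\Lscal\Upsilon = \kL_{0,0}(f,\Upsilon) = \overline{\kL_{0,0}(\Upsilon,f)}$, which vanishes because $\Lscal f = 0$. The one point needing care is reality: since $Q_{1,1}^d$ is a real operator—one checks $\overline{Q_{1,1}^d\omega} = Q_{1,1}^d\oomega$ directly from $\omD\omega = \overline{\mD\oomega}$—the map $\Lscal$ sends real functions to real $(3,2)$-forms, which both justifies identifying $\int_M f\,\Lscal\Upsilon$ with $\kL_{0,0}(f,\Upsilon)$ for real $\Upsilon$ and reduces a general complex $f\in\ker\Lscal$ to its real and imaginary parts. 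I expect this reality bookkeeping in the integral step to be the only genuinely fiddly part; the two transformation laws themselves are immediate from \cref{mQ01-definition-and-properties,lee-form-transformation} together with $d\circ d = 0$.
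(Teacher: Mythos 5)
Your proposal is correct and follows exactly the paper's route: the paper's proof is the one-line citation of \cref{mQ01-definition-and-properties} for~\eqref{eqn:gjms-transformation}, of \cref{mQ01-definition-and-properties,lee-form-transformation} for~\eqref{eqn:q-transformation}, and of \cref{L0-from-Q01} together with~\eqref{eqn:q-transformation} for the integral statement, with the mechanism in each case being precisely your observation that the $L_{1,1}^{\mathrm{BC}}=d\dhor$ correction is annihilated by the outer $d$. Your reality bookkeeping for the Hermitian form $\kL_{0,0}$ is a detail the paper leaves implicit, and you handle it correctly.
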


\begin{proof}
 \Cref{mQ01-definition-and-properties} yields~\eqref{eqn:gjms-transformation}.
 \Cref{mQ01-definition-and-properties,lee-form-transformation} yield~\eqref{eqn:q-transformation}.
 The final conclusion follows from \cref{L0-from-Q01} and~\eqref{eqn:q-transformation}.
\end{proof}

Recall that $(M^5,T^{1,0},\theta)$ is \defn{$Q$-flat} if $(\Qscal)^\theta=0$.
\Cref{scalar-q-flat-characterization} is a consequence of \cref{gjms-and-q-transformation}.

\begin{proof}[Proof of \cref{scalar-q-flat-characterization}]
 Let $\theta$ be a contact form on $(M^5,T^{1,0})$.
 If $\theta$ is $Q$-flat, then $\ell^\theta\in\mHQ^{1,1}$.
 Conversely, if $[\ell^\theta] \in \im \bigl( \mHQ^{1,1} \to \HBC^{1,1}(M) \bigr)$, then there is an $\omega\in\mHQ^{1,1}$ and an $f\in\sR^{0,0}$ such that
 \begin{equation*}
  \ell^\theta = \omega + i\db\dbbar f .
 \end{equation*}
 Applying $d(Q_{1,1}^d)^\theta$ and taking real parts yields $(\Qscal)^\theta = (\Lscal)^\theta\Real f$.
 We conclude from \cref{gjms-and-q-transformation} that $e^{-\Real f}\theta$ is $Q$-flat.
\end{proof}

Our definition of the scalar $Q$-curvature yields an easy computation of the pairing of a CR pluriharmonic function and $\Qscal$.
More generally:

\begin{theorem}
 \label{q-functional}
 Let $(M^5,T^{1,0},\theta)$ be a closed pseudohermitian five-manifold.
 Then
 \begin{equation}
  \label{eqn:q-functional}
  \int_M f \rwedge \left(\Qscal\right)^\theta = \int_M dd_b^cf \rwedge \hodge (\db\db^\ast + \dbbar\dbbar^\ast + P\rwedge)\ell^\theta + 2\int_M d_b^cf \rwedge \ell^\theta \rwedge \ell^\theta
 \end{equation}
 for all $f\in \sR^{0,0}$.
 In particular, if
 \begin{enumerate}
  \item $H^{1}(M;\bR)=0$,
  \item $c_1^2(T^{1,0})=0$ in $H^4(M;\bR)$, or
  \item $(M^5,T^{1,0})$ is strictly pseudoconvex,
 \end{enumerate}
 then $\Qscal \in \sP^\perp$.
\end{theorem}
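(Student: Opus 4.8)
The plan is to establish the integral identity \eqref{eqn:q-functional} first and then read off the three vanishing criteria, since each will amount to the collapse of one factor in a cohomological pairing. Since both sides of \eqref{eqn:q-functional} are $\bC$-linear in $f$ (the operator $d_b^c$ is $\bC$-linear), it suffices to treat real $f\in\sS^0$ and extend by linearity. First I would integrate by parts: because $\Qscal = dQ_{1,1}^d\ell^\theta$ and $f$ has degree zero, \cref{leibnitz} and Stokes' Theorem give $\int_M f\rwedge\Qscal = -\int_M df\rwedge Q_{1,1}^d\ell^\theta$. Writing $Q_{1,1}^d\ell^\theta = i\mD\ell^\theta - i\omD\ell^\theta$ with $\mD\ell^\theta\in\sR^{2,2}$ and $\omD\ell^\theta\in\sR^{3,1}$, bidegree considerations for the Rumin product kill all but one term of each factor, since $\sR^{1,0}\rwedge\sR^{3,1}$ and $\sR^{0,1}\rwedge\sR^{2,2}$ land in the vanishing spaces $\sR^{4,1}$ and $\sR^{2,3}$; thus $df\rwedge\mD\ell^\theta = \db f\rwedge\mD\ell^\theta$ and $df\rwedge\omD\ell^\theta = \dbbar f\rwedge\omD\ell^\theta$. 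Setting $w := \int_M\db f\rwedge\mD\ell^\theta$ and using $\omD\ell^\theta = \overline{\mD\ell^\theta}$ (as $\ell^\theta$ is real) together with $\overline{\db f} = \dbbar f$, the surviving $\dbbar f$-term equals $\overline w$, so $\int_M f\rwedge\Qscal = i\overline w - iw$.

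Next I would expand $w$ via $\mD\ell^\theta = \dbbar D\ell^\theta + 2\ell^\theta\krwedge\ell^\theta$, where $D := \hodge(\db\db^\ast + \dbbar\dbbar^\ast + P\rwedge)$, exactly as in the proof of \cref{fourth-order-transformation} and \cref{second-order-transformation}. A second integration by parts moves $\dbbar$ across (the $\db D\ell^\theta$ piece dies by type, in $\sR^{4,1}$), giving $w = A + B$ with $A := \int_M\dbbar\db f\rwedge D\ell^\theta$ and $B := 2\int_M\db f\rwedge(\ell^\theta\krwedge\ell^\theta)$. The key structural input is that $\hodge$, and hence $D$, commutes with conjugation: conjugation swaps $\db\leftrightarrow\dbbar$ and $\db^\ast\leftrightarrow\dbbar^\ast$ while fixing $\hodge$, $P$, and $\ell^\theta$. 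Therefore $D\ell^\theta$ is real, and using $\db\dbbar f = -\dbbar\db f$ this forces $\overline A = -A$. Combined with the identity $dd_b^c f = 2i\db\dbbar f$ on functions, this yields $i\overline A - iA = -2iA = \int_M dd_b^c f\rwedge D\ell^\theta$, which is precisely the first term on the right of \eqref{eqn:q-functional}.

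For the remaining piece I compute $\overline B = 2\int_M\dbbar f\rwedge\pi^{3,1}(\ell^\theta\rwedge\ell^\theta)$, so that $i\overline B - iB = 2i\int_M\dbbar f\rwedge\pi^{3,1}(\ell^\theta\rwedge\ell^\theta) - 2i\int_M\db f\rwedge\pi^{2,2}(\ell^\theta\rwedge\ell^\theta)$; recognizing $\dbbar f\rwedge(\ell^\theta\rwedge\ell^\theta) = \dbbar f\rwedge\pi^{3,1}(\ell^\theta\rwedge\ell^\theta)$ and $\db f\rwedge(\ell^\theta\rwedge\ell^\theta) = \db f\rwedge\pi^{2,2}(\ell^\theta\rwedge\ell^\theta)$ by type reproduces $2\int_M d_b^c f\rwedge\ell^\theta\rwedge\ell^\theta$, completing \eqref{eqn:q-functional}. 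The three criteria then follow quickly: for $f\in\sP$ we have $\db\dbbar f = 0$, so $dd_b^c f = 2i\db\dbbar f = 0$ and the first term drops out, leaving $\int_M f\rwedge\Qscal = 2\int_M d_b^c f\rwedge\ell^\theta\rwedge\ell^\theta$. Here $d_b^c f$ is $d$-closed (again since $dd_b^c f = 0$) and $\ell^\theta\rwedge\ell^\theta$ is $d$-closed (as $d\ell^\theta=0$, by \cref{leibnitz}), so the integral is the cohomological pairing $\lp [d_b^c f]\cup[\ell^\theta\rwedge\ell^\theta], [M]\rp$, and $[\ell^\theta\rwedge\ell^\theta] = \tfrac{\pi^2}{4}c_1^2(T^{1,0})$ by \eqref{eqn:lee-class-in-H2}. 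This vanishes when $H^1(M;\bR)=0$ (then $[d_b^c f]=0$) or when $c_1^2(T^{1,0})=0$ in $H^4(M;\bR)$; in the strictly pseudoconvex case the latter holds by \cite{Takeuchi2018}. Hence $\int_M f\rwedge\Qscal = 0$ for all $f\in\sP$, i.e.\ $\Qscal\in\sP^\perp$.

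The main obstacle I anticipate is the careful bookkeeping of conjugation on the high-degree Rumin spaces — in particular the bidegree swap $\overline{\sR^{p,q}} = \sR^{q+1,p-1}$ for $p+q\geq3$, which underlies both $\omD\ell^\theta\in\sR^{3,1}$ and the appearance of $\pi^{3,1}(\ell^\theta\rwedge\ell^\theta)$ in $\overline B$ — together with consistently tracking which Rumin products survive on type grounds at each integration by parts. The two structural identities on which the collapse to \eqref{eqn:q-functional} rests, namely that $\hodge$ (and therefore $D$) commutes with conjugation and that $dd_b^c f = 2i\db\dbbar f$ on functions, should each be routine to verify but must be in place before the cancellations go through.
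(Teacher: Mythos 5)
Your proposal is correct and follows essentially the same route as the paper: the paper's proof is the single chain $\int_M f \rwedge \Qscal = -\int_M df \rwedge Q_{1,1}^d\ell^\theta = \int_M d_b^cf \rwedge R_{1,1}^d\ell^\theta$ followed by substituting~\eqref{eqn:mR}, and your $i\overline{w}-iw$ manipulation together with the $A$- and $B$-terms is precisely an unpacking of that identity and of the formula $R_{1,1}^d = d\hodge(\db\db^\ast+\dbbar\dbbar^\ast+P\rwedge) + 2\ell^\theta\rwedge$, resting on the same type considerations ($\sR^{4,1}=\sR^{2,3}=0$) and reality of $\ell^\theta$. The deduction of the three vanishing criteria via the pairing $2\lp [d_b^cu]\cup[\ell^\theta]\cup[\ell^\theta],[M]\rp$, \eqref{eqn:lee-class-in-H2}, and Takeuchi's theorem is identical to the paper's.
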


\begin{proof}
 Let $f\in \sR^{0,0}$.
 Using Stokes' Theorem, we deduce that
 \begin{equation*}
  \int_M f \rwedge \left(\Qscal\right)^\theta = \int_M f \rwedge d(Q_{1,1}^d)^\theta\ell^\theta = -\int_M df \rwedge (Q_{1,1}^d)^\theta\ell^\theta = \int_M d_b^cf \rwedge (R_{1,1}^d)^\theta\ell^\theta .
 \end{equation*}
 Combining these displays with the definition of $R_{1,1}^d$ yields~\eqref{eqn:q-functional}.
 
 Suppose now that $u\in\sP$.
 Then $dd_b^cu=0$.
 In particular, \eqref{eqn:q-functional} yields
 \begin{equation}
  \label{eqn:Q00-sP-pairing}
  \int_M u \rwedge \Qscal = 2 \left\lp [d_b^cu] \cup [\ell^\theta] \cup [\ell^\theta] , [M] \right\rp ,
 \end{equation}
 where $[d_b^cu] \in H^1(M;\bR)$ and $[\ell^\theta] \in H^2(M;\bR)$.
 It immediately follows that if $H^1(M;\bR)=0$, then $\Qscal \in \sP^\perp$.
 Equation~\eqref{eqn:lee-class-in-H2} implies that if $c_1^2(T^{1,0})=0$, then $\Qscal \in \sP^\perp$.
 Finally, if $(M,T^{1,0})$ is strictly pseudoconvex, then a result of Takeuchi~\cite{Takeuchi2018}*{Theorem~1.1} implies that $c_1^2(T^{1,0})=0$.
\end{proof}

The proof of \cref{q-functional} includes the key ingredient in the proof of \cref{q00-integral}:

\begin{proof}[Proof of \cref{q00-integral}]
 Equations~\eqref{eqn:lee-class-in-H2} and~\eqref{eqn:Q00-sP-pairing} imply that~\eqref{eqn:q00-integral} holds.
 The remaining conclusion follows from \cref{q-functional}.
\end{proof}

Another consequence of \cref{q-functional}, is that on closed, strictly pseudoconvex five-manifolds, the condition that the GJMS operator have trivial kernel is sufficient for the existence of a $Q$-flat contact form.

\begin{corollary}
 \label{Q0-flat-from-trivial-kernel}
 Every $(M^5,T^{1,0})$ be a closed, strictly pseudoconvex five-manifold for which $\ker \Lscal = \sP$ admits a $Q$-flat contact form.
\end{corollary}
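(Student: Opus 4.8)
The plan is to reduce the existence of a $Q$-flat contact form to the solvability of one linear equation, and then to read off the integrability condition from strict pseudoconvexity. Fix a contact form $\theta$. By \cref{gjms-and-q-transformation}, a contact form $e^\Upsilon\theta$ with $\Upsilon\in C^\infty(M;\bR)$ is $Q$-flat if and only if $(\Lscal)^\theta\Upsilon = -(\Qscal)^\theta$. Hence it suffices to produce a real solution $\Upsilon$ of $\Lscal\Upsilon = -\Qscal$.

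The integrability condition is supplied by strict pseudoconvexity. Indeed, the strictly pseudoconvex case of \cref{q-functional} (equivalently the final assertion of \cref{q00-integral}) gives $\int_M u\,\Qscal = 0$ for every $u\in\sP$, so that $\Qscal\in\sP^\perp$ with respect to the $L^2$-pairing on $\sR^{0,0}$. Under the hypothesis $\ker\Lscal = \sP$, this is precisely the statement that $-\Qscal$ is orthogonal to $\ker\Lscal$.

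To solve the equation I would use that $\hodge\Lscal$ is formally self-adjoint (\cref{L0-from-Q01}) and that its $L^2$-realization is self-adjoint with closed range~\cite{Takeuchi2020b}; for such an operator the range equals the orthogonal complement of the kernel, so the orthogonality just established yields $\Upsilon$ with $\Lscal\Upsilon = -\Qscal$. Because $\Lscal$ commutes with complex conjugation --- as $Q_{1,1}^d$ does, by \cref{defn:mQ}, and $\db\dbbar+\dbbar\db=0$ on $\sR^{0,0}$ --- replacing $\Upsilon$ by its real part gives a real solution, and then $e^\Upsilon\theta$ is $Q$-flat by the transformation law. Alternatively, one may invoke Takeuchi's characterization~\cite{Takeuchi2020b}: a closed, strictly pseudoconvex manifold admits a $Q$-flat contact form if and only if $\Qscal$ annihilates $\ker\Lscal$, which holds here since $\ker\Lscal=\sP$ and $\Qscal\in\sP^\perp$.

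The main obstacle is this last, functional-analytic step. Since $\sP=\ker\Lscal$ is infinite-dimensional, Fredholm theory does not apply, and it is essential to use the self-adjointness and closed range of the $L^2$-realization of the scalar GJMS operator both to identify its range with $(\ker\Lscal)^\perp$ and to extract a smooth, real solution; the reduction in the first paragraph and the integrability condition from \cref{q-functional} are then routine.
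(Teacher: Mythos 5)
Your proposal is correct and takes essentially the same route as the paper: the paper's proof likewise deduces $\Qscal\in\sP^\perp$ from the strictly pseudoconvex case of \cref{q-functional}, notes that the hypothesis $\ker\Lscal=\sP$ then means $\Qscal$ annihilates $\ker\Lscal$, and concludes by citing Takeuchi's result that this is equivalent to the existence of a $Q$-flat contact form --- which is exactly the ``alternative'' you state at the end. Your main route merely unpacks that citation (solving $\Lscal\Upsilon=-\Qscal$ via self-adjointness and closed range of the $L^2$-realization), so no genuinely different argument is involved.
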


\begin{proof}
 Since $\ker \Lscal = \sP$, we conclude from \cref{q-functional} that $\Qscal$ annihilates $\ker \Qscal$.
 The conclusion follows from a result of Takeuchi~\cite{Takeuchi2020b}*{Proposition~1.4}.
\end{proof}

Note that, unlike in dimension three~\cite{Takeuchi2019}, there are closed, strictly pseudoconvex five-manifolds for which $\Lscal$ has nontrivial kernel~\cite{Takeuchi2017}.

We conclude this section by proving that $\Qscal$ and $\Lscal$ are equivalent to the scalar $Q$-curvature $Q_{\mathrm{FH}}$ and the scalar GJMS operator $L_{\mathrm{FH}}$, respectively, of Fefferman and Hirachi~\cite{FeffermanHirachi2003}.
This is accomplished using the explicit formula for $Q_{\mathrm{FH}}$ computed by Case and Gover~\cite{CaseGover2013} via the CR tractor calculus~\cite{GoverGraham2005}.

\begin{proposition}
 \label{Q0-is-standard}
 Let $(M^5,T^{1,0},\theta)$ be a pseudohermitian five-manifold.
 Then $Q_{\mathrm{FH}} = 8\hodge \Qscal$.
\end{proposition}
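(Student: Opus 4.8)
The plan is to prove the identity by computing both sides explicitly in a fixed admissible coframe and matching them pointwise, since $Q_{\mathrm{FH}}$ and $8\hodge\Qscal$ are both local pseudohermitian scalar invariants and the shared transformation law \eqref{eqn:q-transformation} can only determine them up to a CR invariant ambiguity. First I would record the explicit formula for $Q_{\mathrm{FH}}$ in pseudohermitian terms, as computed by Case and Gover~\cite{CaseGover2013} from the CR tractor calculus~\cite{GoverGraham2005}; this is a real scalar built from the pseudohermitian scalar curvature $R$, the trace-free Schouten tensor $E_{\alpha\bar\beta}$, the torsion $A_{\alpha\beta}$, and their covariant derivatives. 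The task then reduces to computing $8\hodge\Qscal$ in the same invariants.

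For the second side I would use $\Qscal = dQ_{1,1}^d\ell^\theta$ together with the reality of the Lee form. Since $\overline{\ell^\theta} = \ell^\theta$, we have $\omD\ell^\theta = \overline{\mD\ell^\theta}$, and because $d = \db$ on $\sR^{2,2}$ and $d = \dbbar$ on $\sR^{3,1}$ (the complementary components vanishing for degree reasons), one obtains
\begin{equation*}
 \Qscal = i\db\,\mD\ell^\theta - i\dbbar\,\overline{\mD\ell^\theta} \in \sR^{3,2},
\end{equation*}
which is manifestly real. I would then compute $\mD\ell^\theta \in \sR^{2,2}$ by substituting $\omega_{\alpha\bar\beta} = -iE_{\alpha\bar\beta}$ into the frame formula \eqref{eqn:mD-frame}, simplifying with the trace-free condition $E_\mu{}^\mu = 0$; apply $\db$ via \eqref{eqn:db-high} to land in $\sR^{3,2}$; and finally apply $\hodge$, which identifies $\sR^{3,2}$ with $\sR^{0,0}$ by extracting the scalar coefficient. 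Matching the resulting real function against the Case--Gover formula would give $8\hodge\Qscal = Q_{\mathrm{FH}}$, and hence, applying $\hodge$ and using $\hodge^2 = 1$, the second identity of \cref{recover-scalar}.

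The main obstacle is the reduction of $\db\mD\ell^\theta$ to canonical form. Because $\mD$ is fourth order and $\ell^\theta$ is itself curvature, this expression involves up to fifth covariant derivatives of $E_{\alpha\bar\beta}$, and bringing it into a standard ordering of indices requires repeated use of the commutator identities~\cite{Lee1988}*{Lemma~2.3}, each of which contributes further curvature and torsion terms, together with the contracted Bianchi identities relating $\nabla^\mu E_{\mu\bar\beta}$, $\nabla_{\bar\beta}R$, and $\nabla^\mu A_{\mu\bar\beta}$. In particular, the terms quadratic in curvature --- such as $E_{\alpha\bar\beta}E^{\alpha\bar\beta}$, $A_{\alpha\beta}A^{\alpha\beta}$, and multiples of $R^2$ --- are generated only through these commutations, so the delicate point is to organize the computation so that every derivative term and every quadratic curvature term lands on precisely the coefficient appearing in the Case--Gover formula. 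Since the comparison is pointwise, no integration-by-parts simplifications are available and every term must be tracked; I expect the bookkeeping, grouped by derivative order and curvature type, to be the entirety of the difficulty once the formulas above are in hand.
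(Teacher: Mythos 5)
Your proposal is correct and follows essentially the same route as the paper: the paper's proof likewise starts from the Case--Gover tractor formula for $Q_{\mathrm{FH}}$, unpacks it into pseudohermitian invariants using commutator identities and the trace identity $P_\alpha{}^\mu P_{\mu\bar\beta}-\tfrac12 P^{\bar\nu\mu}P_{\mu\bar\nu}h_{\alpha\bar\beta}=PE_{\alpha\bar\beta}$ (valid since $\dim_{\bC}T^{1,0}=2$), and then matches the result term-by-term against $d Q_{1,1}^d\ell^\theta$ computed from~\eqref{eqn:lee-form}, \eqref{eqn:mD-frame}, and~\eqref{eqn:db-high}. Your observation that $\Qscal=i\db\mD\ell^\theta-i\dbbar\overline{\mD\ell^\theta}=2\Real\bigl(i\db\mD\ell^\theta\bigr)$ is exactly the form in which the comparison is carried out, and the bookkeeping you flag as the main difficulty is indeed the whole content of the paper's proof.
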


\begin{proof}
 Case and Gover proved~\cite{CaseGover2013}*{Proposition~8.4} that
 \begin{equation*}
  \frac{1}{16}Z_{\bar E}Z_{A}Z_{\bar B} Q_{\mathrm{FH}} = \bD_{\bar E}K_{A\bar B} - Z_{\bar E}S_{A\bar BC\bar D}K^{\bar DC} ,
 \end{equation*}
 where
 \begin{multline*}
  K_{A\bar B} := -W_A{}^\alpha W_{\bar B}{}^{\bar\beta}E_{\alpha\bar\beta} + W_A{}^\alpha Z_{\bar B}\nabla^{\bar\nu}E_{\alpha\bar\nu} \\ 
   + Z_AW_{\bar B}{}^{\bar\beta}\nabla^\mu E_{\mu\bar\beta} - Z_AZ_{\bar B}\left( E_{\mu\bar\nu}E^{\bar\nu\mu} + \nabla^\mu\nabla^{\bar\nu}E_{\mu\bar\nu} \right) ,
 \end{multline*}
 $\bD_{\bar E}$ is the CR tractor $D$-operator~\cite{CaseGover2013}*{pg.\ 581} on densities of weight $(-1,-1)$, $S_{A\bar BC\bar D}$ is the CR Weyl tractor~\cite{CaseGover2013}*{Proposition~8.1}, and $Z_A$ and $W_A{}^\alpha$ are the contact form-dependent bundle injections~\cite{CaseGover2013}*{Equation~(3.4)}.
 It follows immediately from these definitions that
 \begin{align*}
  S_{A\bar BC\bar D}K^{\bar DC} & \equiv -Z_AZ_{\bar B}U_{\mu\bar\nu}E^{\bar\nu\mu} \mod W_A{}^\alpha , W_{\bar B}{}^{\bar\beta} , \\
  \bD_{\bar E}K_{A\bar B} & = -Z_{\bar E}(\nabla^{\bar\nu}\nabla_{\bar\nu} K_{A\bar B} + i\nabla_0 K_{A\bar B} - PK_{A\bar B}) ,
 \end{align*}
 where $\nabla$ denotes the CR tractor connection~\cite{CaseGover2013}*{Equation~(3.5)},
 \begin{equation*}
  U_{\alpha\bar\beta} = -\Real\nabla_{\alpha}\nabla_{\bar\beta}P + 2\Real\nabla_\alpha\nabla^\mu E_{\mu\bar\beta} + P_\alpha{}^\mu P_{\mu\bar\beta} - A_\alpha{}^{\bar\nu}A_{\bar\nu\bar\beta} + uh_{\alpha\bar\beta} ,
 \end{equation*}
 and $u$ is such that $U_\mu{}^\mu=0$ (cf. \cite{CaseGover2013}*{pg.\ 575}).
 Since $\dim_{\bC}T^{1,0}=2$, it holds that
 \begin{equation*}
  P_\alpha{}^\mu P_{\mu\bar\beta} - \frac{1}{2}P^{\bar\nu\mu}P_{\mu\bar\nu}h_{\alpha\bar\beta} = PE_{\alpha\bar\beta} .
 \end{equation*}
 A straightforward computation yields
 \begin{multline*}
  \frac{1}{8}Q_{\mathrm{FH}} = 2\Real \nabla^\gamma \bigl( \nabla_\gamma(E_{\mu\bar\nu}E^{\bar\nu\mu}) + 4E_\gamma{}^\mu\nabla^{\bar\nu}E_{\mu\bar\nu} + \nabla_\gamma\nabla^\mu\nabla^{\bar\nu}E_{\mu\bar\nu} \\
   + 2iA_{\gamma}{}^{\bar\nu}\nabla^{\mu}E_{\mu\bar\nu} - \nabla_\mu(PE_\gamma{}^\mu) \bigr) .
 \end{multline*}
 Commutator identities~\cite{Lee1988}*{Lemma~2.3} yield $\nabla^\mu\nabla^{\bar\nu}E_{\mu\bar\nu} = \nabla^{\bar\nu}\nabla^\mu E_{\mu\bar\nu}$ and
 \begin{align*}
  \nabla^\gamma\left( \nabla_\gamma\nabla^\mu\nabla^{\bar\nu}E_{\mu\bar\nu} + 2iA_{\gamma}{}^{\bar\nu}\nabla^{\mu}E_{\mu\bar\nu} \right) & = \nabla^\mu\nabla^{\bar\nu} \left( 2\nabla_\mu\nabla^\gamma E_{\gamma\bar\nu} - h_{\mu\bar\nu}\nabla^\gamma\nabla^{\bar\sigma}E_{\gamma\bar\sigma} \right) \\
   & = \nabla^{\bar\nu}\nabla^\mu \left( 2\nabla_\mu\nabla^\gamma E_{\gamma\bar\nu} - h_{\mu\bar\nu}\nabla^\gamma\nabla^{\bar\sigma}E_{\gamma\bar\sigma} \right) .
 \end{align*}
 Therefore
 \begin{multline*}
  \frac{1}{8}Q_{\mathrm{FH}} = 2\Real \nabla^\mu \nabla^{\bar\nu} \left( \nabla_\mu \nabla^\gamma E_{\gamma\bar\nu} + \nabla_{\bar\nu}\nabla^{\bar\sigma} E_{\mu\bar\sigma} - h_{\mu\bar\nu}\nabla^\gamma\nabla^{\bar\sigma} E_{\gamma\bar\sigma} - PE_{\mu\bar\nu} \right) \\
   + \Real \nabla^\gamma \left( \nabla_\gamma (E_{\mu\bar\nu}E^{\bar\nu\mu}) + 4E_\gamma{}^\mu\nabla^{\bar\nu}E_{\mu\bar\nu} \right) .
 \end{multline*}
 The conclusion readily follows from~\eqref{eqn:db-high}, \eqref{eqn:lee-form}, \eqref{eqn:mD-frame}, and~\eqref{eqn:scalar}.
\end{proof}

It follows that $\Lscal$ is equivalent to $L_{\mathrm{FH}}$.

\begin{proposition}
 \label{L0-is-standard}
 Let $(M^5,T^{1,0},\theta)$ be a pseudohermitian five-manifold.
 Then $L_{\mathrm{FH}} = 8\hodge \Lscal$.
\end{proposition}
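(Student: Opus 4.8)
The plan is to deduce \cref{L0-is-standard} from \cref{Q0-is-standard} by comparing, under a change of contact form, the transformation law of $8\hodge\Qscal$ with that of $Q_{\mathrm{FH}}$. Recall that Fefferman and Hirachi characterize $L_{\mathrm{FH}}$ as the linearization of the critical scalar $Q$-curvature; in dimension five this amounts to the transformation law
\[
 (Q_{\mathrm{FH}})^{e^\Upsilon\theta} = e^{-3\Upsilon}\bigl( (Q_{\mathrm{FH}})^\theta + (L_{\mathrm{FH}})^\theta\Upsilon \bigr)
\]
for all $\Upsilon\in C^\infty(M)$, the weight $e^{-3\Upsilon}$ reflecting that $Q_{\mathrm{FH}}\,\theta\wedge d\theta^2$ is the natural density. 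Since \cref{Q0-is-standard} holds at \emph{every} contact form, the strategy is to run it at $e^\Upsilon\theta$ and reduce everything back to $\theta$.

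The one piece of new bookkeeping is the dependence of the Hodge star on the contact form when acting on the critical space $\sR^{3,2}$. First I would observe that $\sR^{3,2}$ is CR invariant and consists of multiples of the volume form, that $(e^\Upsilon\theta)\wedge d(e^\Upsilon\theta)^2 = e^{3\Upsilon}\,\theta\wedge d\theta^2$, and that \eqref{eqn:hodge-formula} together with $\hodge^2 = 1$ gives $\hodge^\theta(f\,\theta\wedge d\theta^2) = 2f$. Combining these yields
\[
 \hodge^{e^\Upsilon\theta}\mu = e^{-3\Upsilon}\,\hodge^\theta\mu \qquad \text{for all } \mu\in\sR^{3,2} .
\]

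With this in hand the argument is immediate. Applying \cref{Q0-is-standard} at $e^\Upsilon\theta$, then the scaling of the Hodge star and \cref{gjms-and-q-transformation} (which gives $(\Qscal)^{e^\Upsilon\theta} = (\Qscal)^\theta + (\Lscal)^\theta\Upsilon$), and finally \cref{Q0-is-standard} at $\theta$, I would compute
\[
 (Q_{\mathrm{FH}})^{e^\Upsilon\theta} = 8\hodge^{e^\Upsilon\theta}(\Qscal)^{e^\Upsilon\theta} = 8e^{-3\Upsilon}\hodge^\theta\bigl( (\Qscal)^\theta + (\Lscal)^\theta\Upsilon \bigr) = e^{-3\Upsilon}\bigl( (Q_{\mathrm{FH}})^\theta + 8\hodge^\theta(\Lscal)^\theta\Upsilon \bigr) .
\]
Comparing with the Fefferman--Hirachi transformation law and cancelling the common factor $e^{-3\Upsilon}$ and the term $(Q_{\mathrm{FH}})^\theta$ leaves $(L_{\mathrm{FH}})^\theta\Upsilon = 8\hodge^\theta(\Lscal)^\theta\Upsilon$ for every $\Upsilon$; since both sides are differential operators in $\Upsilon$ this gives $L_{\mathrm{FH}} = 8\hodge\Lscal$.

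I expect the only real obstacle to be conventional rather than computational: one must pin down the weight factor $e^{-3\Upsilon}$ on both sides --- arising on one side from the contact-form dependence of $\hodge$ on $\sR^{3,2}$ and on the other from the density weight of $Q_{\mathrm{FH}}$ in the Fefferman--Hirachi normalization --- and check that the two agree so that they cancel. Once the normalizations are matched, no curvature computation is required, as all of the analytic content has already been supplied by \cref{Q0-is-standard,gjms-and-q-transformation}.
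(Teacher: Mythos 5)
Your proposal is correct and follows essentially the same route as the paper: both deduce the result by combining \cref{Q0-is-standard} with the transformation law $(\Qscal)^{e^\Upsilon\theta} = (\Qscal)^\theta + (\Lscal)^\theta\Upsilon$ and the Fefferman--Hirachi law $e^{3\Upsilon}Q_{\mathrm{FH}}^{e^\Upsilon\theta} = Q_{\mathrm{FH}}^\theta + L_{\mathrm{FH}}^\theta\Upsilon$, matching the weight factors coming from the contact-form dependence of $\hodge$ on $\sR^{3,2}$. The only difference is that you spell out the identity $\hodge^{e^\Upsilon\theta}\mu = e^{-3\Upsilon}\hodge^\theta\mu$ on $\sR^{3,2}$, which the paper leaves implicit.
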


\begin{proof}
 Let $\Upsilon \in C^\infty(M)$.
 Equation~\eqref{eqn:q-transformation} implies that
 \begin{equation*}
  e^{3\Upsilon}\hodge^{e^\Upsilon\theta} (\Qscal)^{e^\Upsilon\theta} = \hodge^\theta (\Qscal)^\theta + \hodge^\theta (\Lscal)^\theta \Upsilon .
 \end{equation*}
 Recall that~\cite{FeffermanHirachi2003}*{Equation~(3.3)}
 \begin{equation*}
  e^{3\Upsilon} Q_{\mathrm{FH}}^{e^\Upsilon\theta} = Q_{\mathrm{FH}}^\theta + L_{\mathrm{FH}}^\theta \Upsilon .
 \end{equation*}
 The conclusion follows from \cref{Q0-is-standard}.
\end{proof}

We conclude this section by proving \cref{recover-scalar}.

\begin{proof}[Proof of \cref{recover-scalar}]
 This follows immediately from \cref{Q0-is-standard,L0-is-standard}.
\end{proof}

\section{Towards a Hodge theory for $\HQ^{0,1}$}
\label{sec:hodge}

In this section we characterize those closed, strictly pseudoconvex CR manifolds for which $\HQ^{0,1} \to H^{0,1}(M)$ is surjective.
We accomplish this by introducing a CR invariant subspace $\HB^{0,1} \subset \HQ^{0,1}$ such that $\dim \HQ^{0,1}/\HB^{0,1} = \dim H^{0,1}(M)$.

\begin{definition}
 Let $(M^5,T^{1,0})$ be a CR five-manifold.
 We set
 \begin{equation*}
  \HB^{0,1} := \left\{ \dbbar f \suchthatcolon f \in \sR^{0,0}, Q_{0,1}^{\dbbar}\dbbar f \in \im \db \right\} .
 \end{equation*}
\end{definition}

\Cref{Q01-definition-and-properties} implies that $\HB^{0,1}$ is a CR invariant vector space.
It is clear that $\HB^{0,1} \subset \HQ^{0,1}$ and that $\dbbar( \ker \db \cap \sR^{0,0}) \subset \HB^{0,1}$.
We deduce from embeddability~\cite{Boutet1975} that $\HB^{0,1}$ is infinite-dimensional on closed, strictly pseudoconvex CR five-manifolds.
Note that the identity map and the $Q$-curvature operator on $\dbbar$-closed $(0,1)$-forms induce well-defined morphisms $\iota \colon \HQ^{0,1} / \HB^{0,1} \to H^{0,1}(M)$ and $Q_{0,1} \colon \HQ^{0,1} / \HB^{0,1} \to H_{\db}^{2,2}(M)$, respectively.

The first step in proving that $\dim \HQ^{0,1}/\HB^{0,1} = \dim H^{0,1}(M)$ on closed, strictly pseudoconvex CR five-manifolds is to construct a short exact sequence involving the above spaces and their ``duals'' $\HQdual_{0,1}$ and $\HBdual_{0,1}$.

\begin{definition}
 Let $(M^5,T^{1,0})$ be a CR five-manifold.
 We set
 \begin{align*}
  \HQdual_{0,1} & := \left\{ \omega \in \sR^{2,2} \suchthatcolon \exists \tau \in \ker\dbbar \cap \sR^{0,1} , \db\omega = \db Q_{0,1}^{\dbbar}\tau \right\} , \\
  \HBdual_{0,1} & := \left\{ \omega \in \sR^{2,2} \suchthatcolon \exists f \in \sR^{0,0} , \omega - Q_{0,1}^{\dbbar}\dbbar f \in \im \db \right\} .
 \end{align*}
\end{definition}

\Cref{Q01-definition-and-properties} implies that both $\HQdual_{0,1}$ and $\HBdual_{0,1}$ are CR invariant vector spaces.
It is clear that $\HBdual_{0,1} \subset \HQdual_{0,1}$.

The aforementioned short exact sequence is as follows (cf.\ \cite{BransonGover2005}*{Equation~(13)}):

\begin{proposition}
 \label{H01-B01-short-exact-sequence}
 Let $(M^5, T^{1,0})$ be a CR five-manifold.
 Then the sequence
 \begin{equation}
  \label{eqn:H01-B01-short-exact-sequence}
  0 \longrightarrow \frac{\HQ^{0,1}}{\HB^{0,1}} \overset{\iota + Q_{0,1}^{\dbbar}}{\longrightarrow} H^{0,1}(M) \oplus H_{\db}^{2,2}(M) \overset{P}{\longrightarrow} \frac{\HQdual_{0,1}}{\HBdual_{0,1}} \longrightarrow 0
 \end{equation}
 is exact, where $P$ is induced by
 \begin{equation*}
  (\sR^{0,1} \cap \ker \dbbar) \oplus (\sR^{2,2} \cap \ker \db) \ni \omega + \tau \mapsto \tau - Q_{0,1}^{\dbbar}\omega\in \HQdual_{0,1} .
 \end{equation*}
 In particular, if $(M^5,T^{1,0})$ is closed and strictly pseudoconvex, then $\HQ^{0,1}/\HB^{0,1}$ and $\HQdual_{0,1}/\HBdual_{0,1}$ are finite-dimensional.
\end{proposition}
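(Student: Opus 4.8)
The proof is a diagram chase; once the four spaces are known to be CR invariant---which is immediate from \cref{Q01-definition-and-properties}---the entire statement is formal homological algebra resting on the relations $\dbbar^2=0$, $\db^2=0$, and $\db Q_{0,1}^{\dbbar}\omega=0$ for $\omega\in\HQ^{0,1}$. The plan is to first check that the two maps descend to the indicated quotients, then verify exactness at each of the three interior spots, and finally extract finiteness from the finite-dimensionality of the middle term of \eqref{eqn:H01-B01-short-exact-sequence}.

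First I would record well-definedness. The map $\iota+Q_{0,1}^{\dbbar}$ sends $\omega\mapsto([\omega],[Q_{0,1}^{\dbbar}\omega])$; here $[Q_{0,1}^{\dbbar}\omega]\in H_{\db}^{2,2}(M)$ makes sense precisely because $\db Q_{0,1}^{\dbbar}\omega=0$, and the map kills $\HB^{0,1}$ since $\dbbar f\in\im\dbbar$ and $Q_{0,1}^{\dbbar}\dbbar f\in\im\db$ by the definition of $\HB^{0,1}$. For $P$, given cocycles $\omega\in\ker\dbbar$ and $\tau\in\ker\db$, the element $\tau-Q_{0,1}^{\dbbar}\omega$ lies in $\HQdual_{0,1}$ (witnessed by $-\omega$), and replacing $\omega$ by $\omega+\dbbar g$ or $\tau$ by $\tau+\db\rho$ alters $\tau-Q_{0,1}^{\dbbar}\omega$ only by an element of $\HBdual_{0,1}$, so $P$ descends.

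For exactness I would argue as follows. Injectivity of $\iota+Q_{0,1}^{\dbbar}$: if $\omega\in\HQ^{0,1}$ has $[\omega]=0$ and $[Q_{0,1}^{\dbbar}\omega]=0$, then $\omega=\dbbar f$ with $Q_{0,1}^{\dbbar}\omega\in\im\db$, i.e. $\omega\in\HB^{0,1}$. At the middle, $P\circ(\iota+Q_{0,1}^{\dbbar})=0$ is visible since $Q_{0,1}^{\dbbar}\omega-Q_{0,1}^{\dbbar}\omega=0$; conversely, if $P([\omega],[\tau])=0$ then, after correcting $\omega$ to $\omega':=\omega+\dbbar f$ using the $\HBdual_{0,1}$-membership, one has $\tau-Q_{0,1}^{\dbbar}\omega'\in\im\db$, and applying $\db$ together with $\db\tau=0$ and $\db^2=0$ forces $\db Q_{0,1}^{\dbbar}\omega'=0$, so $\omega'\in\HQ^{0,1}$ and $(\iota+Q_{0,1}^{\dbbar})[\omega']=([\omega],[\tau])$. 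Surjectivity of $P$: given $\eta\in\HQdual_{0,1}$ with $\db\eta=\db Q_{0,1}^{\dbbar}\tau'$, the pair $(-\tau',\,\eta-Q_{0,1}^{\dbbar}\tau')$ is a cocycle mapping to $\eta$.

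The finite-dimensionality is then immediate: $H^{0,1}(M)$ is finite-dimensional by \cref{hodge}, and $H_{\db}^{2,2}(M)$ is finite-dimensional by Serre duality, so the middle term of \eqref{eqn:H01-B01-short-exact-sequence} is finite-dimensional; exactness exhibits $\HQ^{0,1}/\HB^{0,1}$ as a subspace and $\HQdual_{0,1}/\HBdual_{0,1}$ as a quotient of it. The one step demanding care is the middle exactness: having corrected $\omega$ to $\omega'$, one must still verify $\omega'\in\HQ^{0,1}$, which is exactly where applying $\db$ to $\tau-Q_{0,1}^{\dbbar}\omega'\in\im\db$ and invoking $\db^2=0$ is essential. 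Conceptually, this is the heart of the matter, since it is the exact sequence that lets the \emph{finite}-dimensional groups $H^{0,1}(M)$ and $H_{\db}^{2,2}(M)$ control the quotients even though $\HQ^{0,1}$, $\HB^{0,1}$, $\HQdual_{0,1}$, and $\HBdual_{0,1}$ are each infinite-dimensional (for instance $\dbbar(\ker\db\cap\sR^{0,0})\subset\HB^{0,1}$).
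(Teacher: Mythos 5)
Your proposal is correct and follows essentially the same diagram chase as the paper: the same verification that $P$ descends (via $P(\dbbar f+\db\xi)=\db\xi-Q_{0,1}^{\dbbar}\dbbar f\in\HBdual_{0,1}$), the same three exactness checks, and the same appeal to finite-dimensionality of $H^{0,1}(M)$ and $H_{\db}^{2,2}(M)$ for the final claim. Your explicit step of applying $\db$ to $\tau-Q_{0,1}^{\dbbar}\omega'\in\im\db$ to conclude $\omega'\in\HQ^{0,1}$ is a detail the paper leaves implicit, but it is the same argument.
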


\begin{proof}
 Observe that if $f\in\sR^{0,0}$ and $\xi\in\sR^{1,2}$, then
 \begin{equation*}
  P(\dbbar f + \db\xi) = \db\xi - Q_{0,1}^{\dbbar}\dbbar f \in \HBdual_{0,1} .
 \end{equation*}
 Therefore $P$ is well-defined.
 Clearly $P \circ (\iota + Q_{0,1}^{\dbbar}) = 0$.
 
 Let $\omega \in \HQ^{0,1}$ be such that $[\omega] + [Q_{0,1}^{\dbbar}\omega] = 0$ in $H^{0,1}(M) \oplus H_{\db}^{2,2}(M)$.
 Then there are $f\in\sR^{0,0}$ and $\xi\in\sR^{1,2}$ such that $\omega=\dbbar f$ and $Q_{0,1}^{\dbbar}\omega = \db\xi$.
 We conclude that~\eqref{eqn:H01-B01-short-exact-sequence} is exact at $\HQ^{0,1} / \HB^{0,1}$.
 
 Let $\omega \in \ker\dbbar \cap \sR^{0,1}$ and $\tau \in \ker \db \cap \sR^{2,2}$ be such that
 \begin{equation*}
  P([\omega] + [\tau]) = 0 \in \HQdual_{0,1} / \HBdual_{0,1} .
 \end{equation*}
 Then there is a $\zeta \in \HBdual_{0,1}$ such that $\tau - Q_{0,1}^{\dbbar}\omega = \zeta$.
 As an element of $\HBdual_{0,1}$, there are $f\in\sR^{0,0}$ and $\xi\in\sR^{1,2}$ such that $\zeta = Q_{0,1}^{\dbbar}\dbbar f + \db\xi$.
 Therefore
 \begin{equation*}
  \tau - \db\xi = Q_{0,1}^{\dbbar}(\omega + \dbbar f) .
 \end{equation*}
 We conclude that~\eqref{eqn:H01-B01-short-exact-sequence} is exact at $H^{0,1}(M) \oplus H_{\db}^{2,2}(M)$.
 
 Let $\omega \in \HQdual_{0,1}$.
 Then there is a $\tau \in \ker \dbbar \cap \sR^{0,1}$ such that $\db\omega = \db Q_{0,1}^{\dbbar}\tau$.
 Therefore $\omega - Q_{0,1}^{\dbbar}\tau \in \ker \db$.
 In particular, $[\omega] = P([-\tau] + [\omega - Q_{0,1}^{\dbbar}\tau]) \in \HQdual_{0,1} / \HBdual_{0,1}$.
 We conclude that~\eqref{eqn:H01-B01-short-exact-sequence} is exact at $\HQdual_{0,1} / \HBdual_{0,1}$.
 
 The final conclusion follows from \cref{hodge}.
\end{proof}

The second step in proving that $\dim \HQ^{0,1}/\HB^{0,1} = \dim H^{0,1}(M)$ is to show that integration defines a perfect pairing between $\HQ^{0,1}/\HB^{0,1}$ and $\HQdual_{0,1}/\HBdual_{0,1}$.

\begin{theorem}
 \label{mH-to-H-surjective}
 Let $(M^5,T^{1,0})$ be a closed, strictly pseudoconvex five-manifold.
 Then the map $(\omega,\tau) \mapsto \int_M \omega \rwedge \otau$ induces a CR invariant perfect pairing
 \begin{equation*}
  \kI \colon \frac{\HQ^{0,1}}{\HB^{0,1}} \times \frac{\HQdual_{0,1}}{\HBdual_{0,1}} \to \bC .
 \end{equation*}
 In particular, $\HQ^{0,1} \to H^{0,1}(M)$ is surjective if and only if
 \begin{equation*}
  \HB^{0,1} = \ker \bigl( \HQ^{0,1} \to H^{0,1}(M) \bigr) .
 \end{equation*}
\end{theorem}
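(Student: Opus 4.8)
The plan is to reduce $\kI$ to the Serre duality pairing $\kJ\colon H^{0,1}(M)\times H_{\db}^{2,2}(M)\to\bC$ through the short exact sequence of \cref{H01-B01-short-exact-sequence}, which already supplies finite-dimensionality of both quotients. Write $U:=H^{0,1}(M)\oplus H_{\db}^{2,2}(M)$ and $\alpha:=\iota+Q_{0,1}^{\dbbar}$, so that $\alpha$ is the injection and $P$ the surjection in that sequence. The computational heart is the identity
\[
 \int_M\omega\rwedge\overline{\zeta}=\kJ\bigl([\omega],[\tau]\bigr)+\overline{\kJ\bigl([\sigma],[Q_{0,1}^{\dbbar}\omega]\bigr)},
\]
valid for $\omega\in\HQ^{0,1}$ and $\zeta\in\HQdual_{0,1}$, where $\sigma\in\ker\dbbar\cap\sR^{0,1}$ satisfies $\db\zeta=\db Q_{0,1}^{\dbbar}\sigma$ and $\tau:=\zeta-Q_{0,1}^{\dbbar}\sigma\in\ker\db\cap\sR^{2,2}$. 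It follows from Stokes' Theorem (splitting off $\kJ([\omega],[\tau])$ using $\dbbar\omega=0$ and $\db\tau=0$), the definition of $\kQ_{0,1}$, and the Hermitian symmetry in \cref{Q01-symmetric}.

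First I would read off well-definedness and CR invariance. For $\omega\in\HB^{0,1}$ one has both $[\omega]=0$ in $H^{0,1}(M)$ and $[Q_{0,1}^{\dbbar}\omega]=0$ in $H_{\db}^{2,2}(M)$, so the identity forces the integral to vanish; symmetrically, representing $\zeta\in\HBdual_{0,1}$ as $Q_{0,1}^{\dbbar}\dbbar f+\db\xi$ (so that $\sigma=\dbbar f$ and $[\tau]=0$) makes it vanish there. Hence $\kI$ descends to the quotients, and CR invariance is immediate because the spaces, the product $\rwedge$, and the integral of a top-degree form are all independent of the contact form. The same identity shows $\kI(v,w)=\mB(\alpha(v),u)$ for any $P$-preimage $u$ of $w$, where $\mB$ is the sesquilinear form on $U$ given by $\mB((a_1,b_1),(a_2,b_2)):=\kJ(a_1,b_2)-\overline{\kJ(a_2,b_1)}$. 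Nondegeneracy of $\kJ$ makes $\mB$ nondegenerate, while \cref{Q01-symmetric} shows that $\alpha(\HQ^{0,1}/\HB^{0,1})$ is $\mB$-isotropic; combined with injectivity of $\alpha$, this yields nondegeneracy of $\kI$ in its first variable.

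The main obstacle is nondegeneracy in the second variable, equivalently that $\alpha(\HQ^{0,1}/\HB^{0,1})$ is maximal isotropic (Lagrangian), equivalently $\dim\HQ^{0,1}/\HB^{0,1}=\dim H^{0,1}(M)$. Unwinding the exact sequence, a class $w$ that $\kI$-annihilates all of $\HQ^{0,1}/\HB^{0,1}$ gives, after choosing $\sigma$ and solving $\Lscal h=-2\db Q_{0,1}^{\dbbar}\sigma$, an element $\omega_0=-\sigma+\dbbar h\in\HQ^{0,1}$ whose class agrees with a lift of $w$ in the $H^{0,1}(M)$-component; the remaining discrepancy is a class $c\in H_{\db}^{2,2}(M)$ with $\kJ([\omega],c)=0$ for all $\omega\in\HQ^{0,1}$, which must be realized as $[Q_{0,1}^{\dbbar}\dbbar h']$ for some $h'\in\ker\Lscal$. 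Both steps are solvability problems for the scalar GJMS operator: by \cref{L0-from-Q01} the requisite orthogonality to $\ker\Lscal$ is furnished by \cref{Q01-symmetric} together with the pairing hypothesis, and existence of solutions rests on the self-adjointness and closed range of the $L^2$-realization of $\Lscal$. This is the step I expect to be hardest, precisely because $\HQ^{0,1}$ is infinite-dimensional and $\Lscal$ is not hypoelliptic, so no elliptic detour complex is available as in the conformal setting.

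Finally, the ``in particular'' is pure linear algebra. Once $\kI$ is perfect, $\dim\HQ^{0,1}/\HB^{0,1}=\dim\HQdual_{0,1}/\HBdual_{0,1}$, and adding these in the exact sequence gives $\dim\HQ^{0,1}/\HB^{0,1}=\dim H^{0,1}(M)$. Thus $\iota\colon\HQ^{0,1}/\HB^{0,1}\to H^{0,1}(M)$ is a map between spaces of equal finite dimension, hence surjective if and only if injective, i.e.\ if and only if $\ker\iota=\ker\bigl(\HQ^{0,1}\to H^{0,1}(M)\bigr)/\HB^{0,1}$ is trivial. Since $\HQ^{0,1}\to H^{0,1}(M)$ is surjective exactly when $\iota$ is, this is equivalent to $\HB^{0,1}=\ker\bigl(\HQ^{0,1}\to H^{0,1}(M)\bigr)$.
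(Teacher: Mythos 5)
Your architecture matches the paper's: well-definedness and first-slot nondegeneracy of $\kI$ via the Hermitian symmetry of $\kQ_{0,1}$ (\cref{Q01-symmetric}) and Serre duality, finite-dimensionality from \cref{H01-B01-short-exact-sequence}, and the concluding dimension count for the ``in particular''. Your repackaging of the pairing as $\kI(v,w)=\mB(\alpha(v),u)$ for the sesquilinear form $\mB((a_1,b_1),(a_2,b_2))=\kJ(a_1,b_2)-\overline{\kJ(a_2,b_1)}$ on $H^{0,1}(M)\oplus H_{\db}^{2,2}(M)$, with $\im\alpha$ isotropic and the theorem reduced to $\im\alpha$ being Lagrangian, is correct and a tidy way to organize the easy half; the key identity you state does check out, using that $Q_{0,1}^{\dbbar}\omega$ is $\db$-closed for $\omega\in\HQ^{0,1}$ so that $[Q_{0,1}^{\dbbar}\omega]\in H_{\db}^{2,2}(M)$ makes sense.

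The gap is in the second-slot nondegeneracy, which is the heart of the theorem and which you leave at the level of an assertion. Producing $\omega_0\in\HQ^{0,1}$ is fine: the orthogonality $\hodge\db\zeta\perp\ker\Lscal$ needed to solve $\Lscal h=-2\db Q_{0,1}^{\dbbar}\sigma$ comes from testing the hypothesis against $\dbbar(\ker\Lscal)\subset\HQ^{0,1}$, and solvability is Takeuchi's closed-range theorem. But you are then left with a class $c\in H_{\db}^{2,2}(M)$ satisfying $\kJ([\omega],c)=0$ for all $\omega\in\HQ^{0,1}$, and the claim that $c=[Q_{0,1}^{\dbbar}\dbbar h']$ for some $h'\in\ker\Lscal$ is \emph{not} a solvability problem for $\Lscal$: it is the statement that the finite-dimensional subspace $V:=\{[Q_{0,1}^{\dbbar}\dbbar h']\suchthatcolon h'\in\ker\Lscal\}\subset H_{\db}^{2,2}(M)$ exhausts the $\kJ$-annihilator of $\mT^{0,1}:=\im\bigl(\HQ^{0,1}\to H^{0,1}(M)\bigr)$, i.e.\ a surjectivity/dimension count for $h'\mapsto[Q_{0,1}^{\dbbar}\dbbar h']$, and solving an equation $\Lscal h'=\dotsb$ does not produce an element of $\ker\Lscal$ with prescribed class. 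The paper spends the bulk of its proof on precisely this point: it takes a basis $\{[\tau_j]\}$ of a complement of $\mT^{0,1}$, normalizes representatives via closed range so that $\db Q_{0,1}^{\dbbar}\tau_j=\hodge\varphi_j$ with $\varphi_j\in\ker\Lscal$, proves the $\varphi_j$ are linearly independent, and inverts the resulting Gram matrix. Your step can alternatively be closed by a duality argument --- if $[\tau]$ annihilates $V$ under $\kJ$, then $\hodge\db Q_{0,1}^{\dbbar}\tau\perp\ker\Lscal$ in $L^2$, so closed range gives $f$ with $\tau-\dbbar f\in\HQ^{0,1}$, hence $[\tau]\in\mT^{0,1}$ and, since $\kJ$ is perfect, $V$ contains the annihilator of $\mT^{0,1}$ --- so the tools you name do suffice, but some such argument must actually be supplied.
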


\begin{proof}
 Let $\dbbar f \in \HB^{0,1}$ and $\tau \in \HQdual_{0,1}$.
 Then there is a $\xi \in \ker \dbbar \cap \sR^{0,1}$ such that $\db\tau = \db Q_{0,1}^{\dbbar}\xi$.
 It follows from Stokes' Theorem that
 \begin{equation*}
  \int_M \dbbar f \rwedge \otau = -\int_M f \rwedge \overline{\db\tau} = -\int_M f \rwedge \overline{\db Q_{0,1}^{\dbbar}\xi} = \kQ_{0,1}(\dbbar f, \xi) .
 \end{equation*}
 Since $\dbbar f \in \HB^{0,1}$, there is a $\zeta \in \sR^{1,2}$ such that $Q_{0,1}^{\dbbar}\dbbar f = \db\zeta$.
 Using Stokes' Theorem and \cref{Q01-symmetric}, we conclude that
 \begin{equation*}
  \int_M \dbbar f \rwedge \otau = \overline{ \kQ_{0,1}(\xi, \dbbar f) } = \int_M \oxi \rwedge \db\zeta = 0 .
 \end{equation*}
 
 Similarly, let $\omega \in \HQ^{0,1}$ and $\tau\in\HBdual_{0,1}$.
 Then there are $f\in\sR^{0,0}$ and $\xi\in\sR^{1,2}$ such that $\tau = \db\xi + Q_{0,1}^{\dbbar}\dbbar f$.
 As $\dbbar\omega=0$, we conclude from Stokes' Theorem and \cref{Q01-symmetric} that
 \begin{equation*}
  \int_M \omega \rwedge \otau = \kQ_{0,1} (\omega, \dbbar f) = \overline{\kQ_{0,1}(\dbbar f, \omega)} .
 \end{equation*}
 Since $\omega\in\HQ^{0,1}$, it holds that $\db Q_{0,1}^{\dbbar}\omega = 0$, and hence $\int\omega\rwedge\otau = 0$.
 We conclude that $\kI$ is well-defined.
 By the final conclusion of \cref{H01-B01-short-exact-sequence}, it suffices to show that $\kI$ is nondegenerate to conclude that it is a perfect pairing.
 To that end, let $\theta$ be a contact form on $(M^5,T^{1,0})$.
 
 Suppose first that $\omega \in \HQ^{0,1}$ is such that $\int \omega \rwedge \otau = 0$ for all $\tau\in\HQdual_{0,1}$.
 Since $\ker\db \cap \sR^{2,2} \subset \HQdual_{0,1}$, we conclude from Serre duality that $[\omega]=0$ in $H^{0,1}(M)$.
 Now let $\eta \in \ker\dbbar \cap \sR^{0,1}$.
 Then $Q_{0,1}^{\dbbar}\eta \in \HQdual_{0,1}$.
 Using \cref{Q01-symmetric}, we compute that
 \begin{equation*}
  0 = \int_M \oomega \rwedge Q_{0,1}^{\dbbar}\eta = \int_M \eta \rwedge \overline{Q_{0,1}^{\dbbar}\omega} .
 \end{equation*}
 Since $\eta$ is arbitrary, we conclude from Serre duality that $[Q_{0,1}^{\dbbar}\omega]=0$ in $H_{\db}^{2,2}(M)$.
 Therefore $\omega \in \HB^{0,1}$, and hence $\kI$ is nondegenerate on the left.
 
 Suppose next that $\omega \in \HQdual_{0,1}$ is such that $\int \tau \rwedge \oomega = 0$ for all $\tau\in\HQ^{0,1}$.
 Since $\dbbar(\ker \Lscal) \subset \HQ^{0,1}$, we deduce that $\hodge\db\omega$ is $L^2$-orthogonal to $\ker \Lscal$.
 Combining \cref{L0-is-standard} with a result of Takeuchi~\cite{Takeuchi2020b}*{Theorem~1.1} implies that there is an $f\in\sR^{0,0}$ such that $\db\omega = \db Q_{0,1}^{\dbbar}\dbbar f$.
 Set $\omega_1 := \omega - Q_{0,1}^{\dbbar}\dbbar f$.
 Then $\db\omega_1=0$.
 
 Set
 \begin{equation*}
  \mT^{0,1} := \left\{ [\tau] \suchthatcolon \tau \in \HQ^{0,1} \right\} \subset H^{0,1}(M) .
 \end{equation*}
 \Cref{Q01-symmetric} and Stokes' Theorem imply that
 \begin{equation*}
  0 = \int_M \tau \rwedge \oomega = \int_M \tau \rwedge \oomega_1
 \end{equation*}
 for all $\tau \in \HQ^{0,1}$ and all $f \in \sR^{0,0}$.
 Therefore
 \begin{equation}
  \label{eqn:perp-mT1}
  \kJ([\tau], [\omega_1]) = 0
 \end{equation}
 for all $[\tau] \in \mT^{0,1}$.
 \Cref{Q01-symmetric,L0-from-Q01} imply that if $[\tau] \in \mT^{0,1}$ and $\psi \in \ker \Lscal$, then
 \begin{equation}
  \label{eqn:perp-mT2}
  \overline{\kJ( [\tau] , [Q_{0,1}^{\dbbar}\dbbar\psi] )} = \kQ_{0,1}(\dbbar\psi, \tau) = -\int_M \psi \rwedge \overline{\db Q_{0,1}^{\dbbar}\tau} = 0 .
 \end{equation}
 
 Now, a result of Takeuchi~\cite{Takeuchi2020b}*{Theorem~1.1} implies that if $\tau \in \sR^{0,1}$, then there are $\varphi \in \Lscal$ and $f \in \sR^{0,0}$ such that
 \begin{equation}
  \label{eqn:takeuchi}
  \db Q_{0,1}^{\dbbar}\tau = \hodge \varphi + \db Q_{0,1}^{\dbbar}\dbbar f .
 \end{equation}
 Let $\{ [\tau_j] \}_{j\in J}$ be a (necessarily finite-dimensional) basis for a complement of $\mT^{0,1}$ in $H^{0,1}(M)$.
 Equation~\eqref{eqn:takeuchi} implies that we may take representatives $\tau_j \in \ker\dbbar \cap \sR^{0,1}$ of $[\tau_j]$ such that $\db Q_{0,1}^{\dbbar}\tau_j = \hodge \varphi_j$, $\varphi_j \in \ker \Lscal$.
 Since $\{ [\tau_j] \}_{j\in J}$ is a complement for $\mT^{0,1} \subset H^{0,1}(M)$, the set $\{ \varphi_j \}$ is linearly independent;
 indeed, if $\sum_j a_j\varphi_j=0$, then $\sum_j a_j\tau_j \in \HQ^{0,1}$.
 We conclude that there are constants $\{ c_k \}_{k \in J}$ such that
 \begin{equation}
  \label{eqn:choose-coefficients}
  \frac{1}{2}\sum_{k\in J}\int_M \varphi_j\overline{c_k\varphi_k}\,\theta \wedge d\theta^2 = \kJ( [\tau_j], [\omega_1] )
 \end{equation}
 for all $j \in J$.
 Set $\psi := \sum_k c_k\varphi_k$ and $\omega_2 := \omega_1 + Q_{0,1}^{\dbbar}\dbbar\psi$.
 On the one hand, \cref{Q01-symmetric} and~\eqref{eqn:choose-coefficients} imply that
 \begin{align*}
  \kJ([\tau_j],[\omega_2]) & = \kJ([\tau_j],[\omega_1]) + \kQ_{0,1}(\tau_j,\dbbar\psi)  = \kJ([\tau_j],[\omega_1]) - \int_M \opsi \rwedge \db Q_{0,1}^{\dbbar}\tau_j = 0
 \end{align*}
 for all $j\in J$.
 On the other hand, \eqref{eqn:perp-mT1} and~\eqref{eqn:perp-mT2} imply that
 \begin{equation*}
  0 = \kJ( [\tau] , [\omega_1] ) = \kJ( [\tau] , [\omega_2] )
 \end{equation*}
 for all $[\tau] \in \mT^{0,1}$.
 We conclude from Serre duality that $[\omega_2]=0$ in $H_{\db}^{2,2}(M)$.
 In particular, $\omega \in \HBdual_{0,1}$, and hence $\kI$ is nondegenerate on the right.
 
 Finally, \cref{hodge} implies that $H^{0,1}(M)$ and $H_{\db}^{2,2}(M)$ are isomorphic as finite-dimensional vector spaces.
 Since $\kI \colon (\HQ^{0,1} / \HB^{0,1}) \oplus (\HQdual_{0,1} / \HBdual_{0,1}) \to \bC$ is a perfect pairing, we conclude from \cref{H01-B01-short-exact-sequence} that
 \begin{equation*}
  \dim \HQ^{0,1} / \HB^{0,1} = \dim H^{0,1}(M) .
 \end{equation*}
 In particular, the canonical morphism $\iota \colon \HQ^{0,1} / \HB^{0,1} \to H^{0,1}(M)$ is injective if and only if it is surjective.
 The final conclusion readily follows.
\end{proof}

\begin{remark}
 Since $\HQ^{0,1}$ is infinite-dimensional, the pair $(\dbbar, \db Q_{0,1}^{\dbbar})$ is not graded injectively hypoelliptic.
 It is for this reason that our proof that $\kI$ is nondegenerate on the right is more complicated than the proof of the analogous result for closed Riemannian manifolds~\cite{BransonGover2005}*{Theorem~2.11}.
\end{remark}

\section{Towards the Weak Lee Conjecture}
\label{sec:strong-lee}

In this section we use the $Q$-form on $\dbbar$-closed $(0,1)$-forms to formulate a condition sufficient for the morphism $\HQ^{0,1} \to H^{0,1}(M)$ to be surjective.
This condition is also sufficient for the validity of the Weak Lee Conjecture.

\Cref{Q01-symmetric} justifies the following definition.

\begin{definition}
 A closed CR five-manifold $(M^5,T^{1,0})$ has \defn{nonnegative $Q$-curv\-ature operator on $\dbbar$-closed $(0,1)$-forms} if $\kQ_{0,1}(\omega,\omega)\geq0$ for all $\omega \in \ker \dbbar \cap \sR^{0,1}$.
\end{definition}

The main result of this section is the following sufficient condition for the surjectivity of the canonical morphism $\HQ^{0,1} \to H^{0,1}(M)$ (cf.\ \cite{AubryGuillarmou2011}*{Proposition~1.4}).

\begin{proposition}
 \label{Q-nonnegative-consequence}
 Let $(M^5,T^{1,0})$ be a closed, strictly pseudoconvex five-manifold with nonnegative $Q$-curvature operator on $\dbbar$-closed $(0,1)$-forms.
 Then the canonical morphism $\HQ^{0,1} \to H^{0,1}(M)$ is surjective.
\end{proposition}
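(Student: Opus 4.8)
The plan is to invoke the criterion at the end of \cref{mH-to-H-surjective}: the morphism $\HQ^{0,1} \to H^{0,1}(M)$ is surjective precisely when $\HB^{0,1} = \ker\bigl(\HQ^{0,1} \to H^{0,1}(M)\bigr)$. Since every element of $\HB^{0,1}$ has the form $\dbbar f$ and hence maps to $0$ in $H^{0,1}(M)$, the inclusion $\HB^{0,1} \subseteq \ker\bigl(\HQ^{0,1} \to H^{0,1}(M)\bigr)$ holds unconditionally; the entire task is to prove the reverse inclusion using nonnegativity.

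First I would take $\omega \in \ker\bigl(\HQ^{0,1} \to H^{0,1}(M)\bigr)$, so that $\omega = \dbbar f$ for some $f \in \sR^{0,0}$ and $\db Q_{0,1}^{\dbbar}\omega = 0$. The key computation is that $\kQ_{0,1}(\omega,\omega) = 0$: by the same Stokes' Theorem identity $\int_M \dbbar f \rwedge \otau = -\int_M f \rwedge \overline{\db\tau}$ used in \cref{mH-to-H-surjective}, applied with $\tau = Q_{0,1}^{\dbbar}\dbbar f$, one obtains
\begin{equation*}
 \kQ_{0,1}(\omega,\omega) = \int_M \dbbar f \rwedge \overline{Q_{0,1}^{\dbbar}\dbbar f} = -\int_M f \rwedge \overline{\db Q_{0,1}^{\dbbar}\dbbar f} = 0 ,
\end{equation*}
the last equality because $\omega \in \HQ^{0,1}$.

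Next I would exploit the nonnegativity hypothesis. Since $\kQ_{0,1}$ is a nonnegative Hermitian form (\cref{Q01-symmetric}), the Cauchy--Schwarz inequality $\lv\kQ_{0,1}(\tau,\omega)\rv^2 \leq \kQ_{0,1}(\tau,\tau)\,\kQ_{0,1}(\omega,\omega)$ forces $\kQ_{0,1}(\tau,\omega) = 0$ for every $\tau \in \ker\dbbar \cap \sR^{0,1}$. Unwinding the definition, this says $\int_M \tau \rwedge \overline{Q_{0,1}^{\dbbar}\omega} = 0$ for all such $\tau$. Because $\omega \in \HQ^{0,1}$ gives $\db Q_{0,1}^{\dbbar}\omega = 0$, the form $Q_{0,1}^{\dbbar}\omega$ represents a class in $H_{\db}^{2,2}(M)$, and the vanishing just established says this class pairs to zero with every class in $H^{0,1}(M)$ under the Serre duality pairing $\kJ$. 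By nondegeneracy of $\kJ$, we get $[Q_{0,1}^{\dbbar}\omega] = 0$ in $H_{\db}^{2,2}(M)$, i.e.\ $Q_{0,1}^{\dbbar}\omega \in \im\db$. Hence $\omega = \dbbar f \in \HB^{0,1}$, completing the reverse inclusion.

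With $\HB^{0,1} = \ker\bigl(\HQ^{0,1} \to H^{0,1}(M)\bigr)$ in hand, \cref{mH-to-H-surjective} delivers surjectivity. I expect the main obstacle to be conceptual rather than computational: recognizing that nonnegativity converts the a priori weak information $\kQ_{0,1}(\omega,\omega)=0$ into the full vanishing of $\kQ_{0,1}(\cdot,\omega)$ via Cauchy--Schwarz, and that Serre duality then promotes this to membership in $\HB^{0,1}$. All of the heavy lifting --- the finiteness of $\HQ^{0,1}/\HB^{0,1}$ and the perfect pairing $\kI$ --- has already been carried out in \cref{mH-to-H-surjective}, so no new analytic input (for instance, from Takeuchi's closed-range theorem) is needed here.
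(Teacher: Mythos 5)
Your proposal is correct and follows essentially the same route as the paper: reduce to showing $\HB^{0,1} = \ker\bigl(\HQ^{0,1} \to H^{0,1}(M)\bigr)$ via \cref{mH-to-H-surjective}, get $\kQ_{0,1}(\omega,\omega)=0$, apply Cauchy--Schwarz for the nonnegative Hermitian form, and finish with Serre duality. The only cosmetic difference is that the paper derives $\kQ_{0,1}(\dbbar f,\dbbar f)=0$ by noting $f\in\ker\Lscal$ and invoking~\eqref{eqn:kL00-from-Q-form}, whereas you obtain it directly from Stokes' Theorem and $\db Q_{0,1}^{\dbbar}\omega=0$; these are equivalent since $\Lscal=-2\db Q_{0,1}^{\dbbar}\dbbar$.
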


\begin{proof}
 Let $\omega\in\HQ^{0,1}$ be such that $[\omega]=0\in H^{0,1}(M)$.
 \Cref{L0-from-Q01} implies that there is an $f\in\ker \Lscal$ such that $\omega=\dbbar f$.
 Equation~\eqref{eqn:kL00-from-Q-form} then implies that $\kQ_{0,1}(\dbbar f,\dbbar f)=0$.
 Since $\kQ_{0,1}\geq0$, we deduce that $\kJ(\eta, Q_{0,1}^{\dbbar}\dbbar f)=0$ for all $\eta \in \ker\dbbar \cap \sR^{0,1}$.
 Serre duality implies that $Q_{0,1}^{\dbbar}\dbbar f\in \im \db$;
 i.e.\ $\omega \in \HB^{0,1}$.
 The conclusion now follows from \cref{mH-to-H-surjective}.
\end{proof}

Equation~\eqref{eqn:kL00-from-Q-form} implies that if $\kQ_{0,1}\geq0$, then the scalar GJMS operator is nonnegative.
Thus there are examples~\cite{Takeuchi2017}*{Theorem~1.6} of closed, strictly pseudoconvex five-manifolds for which $\kQ_{0,1}$ is not nonnegative.

We conclude this section by proving \cref{q01-nonnegative}.

\begin{proof}[Proof of \cref{q01-nonnegative}]
 The first conclusion is given by \cref{Q-nonnegative-consequence}.
 
 Suppose that $c_1(T^{1,0})=0$.
 Then~\cite{Case2021rumin}*{Theorem~7.5 and Lemma~19.7}
 \begin{equation*}
  [\ell^\theta] \in \im \left( -\Imaginary\db \colon H^{0,1}(M) \to H^{1}(M;\sP) \right) .
 \end{equation*}
 We deduce from \cref{Q-nonnegative-consequence} that $[\ell^\theta] \in \im \bigl( \HQ^{0,1} \to H^{0,1}(M) \to H^1(M;\sP) \bigr)$.
 The final conclusion now follows from \cref{scalar-q-flat-characterization,hodge-diagram-commutes-pluriharmonic}.
\end{proof}

\bibliography{bib}
\end{document}